\newtheorem{proposition}{Proposition}[section]
\newtheorem{lemma}[proposition]{Lemma}
\newtheorem{corollary}[proposition]{Corollary}
\newtheorem{theorem}[proposition]{Theorem}
\newtheorem{remark}[proposition]{Remark}
\theoremstyle{definition}
\newcommand{\selabel}[1]{\label{se:#1}}
\def\<{\leq}
\def\>{\geq}
\def\a{\alpha}
\def\b{\beta}
\def\O{\Omega}
\def\ol{\overline}
\def\t{\triangle}
\def\e{\varepsilon}
\def\oo{\infty}
\def\s{\sigma}
\def\ot{\otimes}
\def\ra{\rightarrow}
\date{}
\begin{document}
\title{The Green Ring of Drinfeld Double $D(H_4)$}
\author{Hui-Xiang Chen}
\address{School of Mathematical Science, Yangzhou University,
Yangzhou 225002, China}
\email{hxchen@yzu.edu.cn}
\thanks{2010 {\it Mathematics Subject Classification}. 16E05, 16G99, 16T99}
\keywords{Green ring, indecomposable module, Sweedler's Hopf algebra, Drinfeld double}
\begin{abstract}
In this paper, we study the Green ring (or the representation ring) of Drinfeld quantum double $D(H_4)$ of Sweedler's
4-dimensional Hopf algebra $H_4$. We first give the decompositions of the tensor products of finite
dimensional indecomposable modules into the direct sum of indecomposable modules over $D(H_4)$.
Then we describe the structure of the Green ring $r(D(H_4))$ of $D(H_4)$ and show that $r(D(H_4))$ is generated,
as a ring, by infinitely many elements
subject to a family of relations.
\end{abstract}
\maketitle

\section*{\bf Introduction}
The tensor product of modules over a Hopf algebra is an important ingredient
in the representation theory of Hopf algebras and quantum groups.
In particular, the decomposition of the tensor product
of indecomposable modules into a direct sum of indecomposable modules has received enormous
attention. For modules over a finite dimensional group algebra,
this information is encoded in the
structure of the Green ring (or the representation ring), see
\cite{Archer, BenCar, BenPar, BrJoh, Green, HTW}).
For modules over a Hopf algebra or a quantum group there are results
on a quiver quantum group by Cibils \cite{Cib},
on  the quantum double of a finite group by Witherspoon
\cite{With}, on the half quantum groups
(or Taft algebras) by Gunnlaugsd$\acute{\rm o}$ttir \cite{Gunn}, on
the coordinate Hopf algebra of quantum ${\rm SL}(2)$ at a root of unity by Chin
\cite{Chin}. Kondo and Saito gave the indecomposable decomposition of tensor products of modules
over the restricted quantum universal enveloping algebra associated to $\mathfrak{sl}_2$
in \cite{ks}. However, the Green rings of those Hopf algebras are either equal to
the Grothendick rings (in the semisimple cases) or not yet computed because of the complexity.
Recently, Chen, Van Oystaeyen and Zhang computed the Green rings of Taft algebras $H_n(q)$ in \cite{ChVOZh},
Li and Zhang studied the Green rings of the generalized Taft algebras in \cite{LiZhang}.
Since the Taft algebras are of finite representation type, their Green rings are
finitely generated as rings. It was shown that the Green rings of Taft algebras
generated by two elements subject to certain relations for each $n\>2$ in \cite{ChVOZh}.
However, the Drinfeld quantum doubles $D(H_n(q))$ of Taft algebras
$H_n(q)$ are of infinite representation type \cite{Ch4}.
Hence the Green rings of the Drinfeld quantum doubles of $H_n(q)$ are much more complicated.
When $n=2$, the Taft algebra $H_2(q)$ is exactly the Sweedler's 4-dimensional Hopf algebra $H_4$
(see \cite{Sw, Ta}).
In this paper, we will investigate the Green ring of the Drinfeld quantum double $D(H_4)$.

The paper is organized as follow. In Section 1, we recall the definitions of Grothendieck ring and Green ring
(or representation ring) of a Hopf algebra, the structure of the Drinfeld quantum double $D(H_4)$ of Sweedler's
4-dimensional Hopf algebra $H_4$ and the finite dimensional indecomposable modules over $D(H_4)$.
In Section 2, we investigate the tensor products of finite dimensional
indecomposable modules over $D(H_4)$. We decompose the tensor product of any two indecomposable
 $D(H_4)$-modules into a direct sum of indecomposable modules.
In Section 3, we study the structure of the Green ring $r(D(H_4))$ of $D(H_4)$. We first investigate a subring $R$
of $r(D(H_4))$, which is generated, as a $\mathbb Z$-module, by the isomorphism classes of
the indecomposable modules located in the connected components
of the AR-quiver of $D(H_4)$ containing simple modules (or indecomposable projective modules).
We show that $R$ is generated, as a ring, by four elements subject to certain relations.
Then we investigate the structure of the Green ring $r(D(H_4))$. We give a family of generators of $r(D(H_4))$
and the relations satisfied by the generators, as a ring, which shows that $r(D(H_4))$ is not finitely generated
as a ring.

\section{\bf Preliminaries}\selabel{1}
Throughout, we work over an algebraically closed field $k$ with char$(k)\not=2$. Unless
otherwise stated, all algebras, Hopf algebras and modules are
defined over $k$; all modules are left modules and finite dimensional;
all maps are $k$-linear; dim, $\otimes$ and Hom stand for ${\rm dim}_k$, $\otimes_k$ and
Hom$_k$, respectively. For the theory of Hopf algebras and quantum groups, we
refer to \cite{Ka, Maj, Mon, Sw}. For the representation theory of finite dimensional algebras,
we refer to \cite{ARS}.
Let $\mathbb Z$ denote all integers, and ${\mathbb Z}_2={\mathbb Z}/2{\mathbb Z}$.

\subsection{Grothendieck rings and Green rings}\selabel{1.1}

For a finite dimensional algebra $A$, let mod$A$ denote the category of finite dimensional $A$-modules.
For a module $M$ in mod$A$ and a nonnegative integer $s$, let $sM$ denote the
direct sum of $s$ copies of $M$. Then $sM=0$ if $s=0$.
Let $P(M)$ denote the projective cover of $M$, and let $I(M)$
denote the injective envelope of $M$. Let $l(M)$ denote the length of $M$,
and let ${\rm rl}(M)$ denote the Loewy length (=radical length=socle length)
of $M$.

For a finite dimensional algebra $A$,
let $G_0(A)$ denote the {\it Grothendieck group} of the category mod$A$.
This is the abelian group that is generated by the isomorphism classes $[M]$ of
$A$-modules $M$ modulo the relations $[M]=[U]+[V]$ for each short exact sequence of
$0\ra U\ra M\ra V\ra 0$ in mod$A$.
It is well known (see \cite{ARS, Bass}) that $G_0(A)$ is a free abelian group with a $\mathbb Z$-basis given
by the classes [$S_i$], $i=1,2,\cdots, t$, where $\{S_1, S_2, \cdots, S_t\}$ is a full set
of non-isomorphic simple $A$-modules.

Let $H$ be a finite dimensional Hopf algebra. Then mod$H$ is a monoidal category \cite{Ka, Mon}.
Hence $G_0(H)$ is an associative ring with the
multiplication given by $[M][N]=[M\ot N]$ for any modules $M$ and $N$ in mod$H$.
The multiplication identity of $G_0(H)$ is $[k]$,
where $k$ is the trivial $H$-module given by the counit of $H$.
In this case, $G_0(H)$ is called the {\it Grothendieck ring} of $H$ (or of the
monoidal category mod$H$).

Let $H$ be a finite dimensional Hopf algebra.
The {\it representation rings} $r(H)$ and $R(H)$
can be defined as follows. $r(H)$ is the abelian group that is generated by the
isomorphism classes $[M]$ of $H$-modules $M$
modulo the relations $[M\oplus N]=[M]+[N]$ for any modules $M$ and $N$ in mod$H$.
The multiplication of $r(H)$
is given by the tensor product of $H$-modules, that is,
$[M][N]=[M\ot N]$. Then $r(H)$ is an associative ring with the identity $[k]$.
$R(H)$ is an associative $k$-algebra defined by $k\ot_{\mathbb Z}r(H)$.
Note that $r(H)$ is a free abelian group with a $\mathbb Z$-basis
$\{[M]|M\in{\rm ind}(H)\}$, where ${\rm ind}(H)$ denotes the category
of finite dimensional indecomposable $H$-modules.
$r(H)$ (resp. $R(H)$) is also called the {\it Green ring} of $H$
(or of the monoidal category mod$H$).
Note that there is canonical ring epimorphism
$r(H)\ra G_0(H)$, $[M]\mapsto [M]$, $M\in {\rm mod}H$.

If $H$ is a quasitriangular Hopf algebra, then $M\ot N\cong N\ot M$ for any $H$-modules
$M$ and $N$. In this case, both $G_0(H)$ and $r(H)$ are commutative rings.

A finite dimensional Hopf algebra $H$ is a symmetric algebra if and only if
$H$ is unimodular and $S^2$ is inner, where $S$ is the antipode of $H$ (see \cite{Lo, ObSch}).
It is well known \cite{Ra} that the Drinfeld double $D(H)$ of a finite dimensional Hopf algebra
is unimodular, and $S_{D(H)}^2$ is inner. Hence $D(H)$ is always symmetric.

Let $H$ be a finite dimensional Hopf algebra. For any module $M$ in mod$H$, the dual space $M^*={\rm Hom}(M, k)$
is also an $H$-module with the action given by
$$(h\cdot f)(m)=f(S(h)\cdot m), \ h\in H,\ f\in M^*,\ m\in M,$$
where $S$ is the antipode of $H$. It is well known that
$(M\ot N)^*\cong N^*\ot M^*$ for any $H$-modules $M$ and $N$.
If $H$ is quasitriangular, then $S^2$ is inner, and so $M^{**}\cong M$ for any
$M\in{\rm mod}H$ (see \cite{Lo}). In this case, this gives rise to a duality $(-)^*$ from ${\rm mod}H$ to itself,
which induces a ring involution of $r(H)$ (resp. $G_0(H)$) given by
$[M]^*=[M^*]$ for any $M\in{\rm mod}H$.

\subsection{Drinfeld double of $H_4$}\selabel{1.2}

Sweedler's 4-dimensional Hopf algebra is a special case of Taft Hopf algebras.
The Drinfeld quantum doubles of Taft's Hopf algebras and their finite representations
were investigated in \cite{Ch1, Ch2, Ch3, Ch4}. The representations of pointed Hopf algebras
and their Drinfeld quantum doubles were also studied in \cite{KropRad}.
Let us recall some results which we need throughout the paper.

Sweedler's $4$-dimensional Hopf algebra $H_4$ is
generated by two elements $g$ and $h$ subject to the relations:
$$g^2=1,\quad\quad h^2=0,\quad\quad gh+hg=0.$$
The coalgebra structure and the antipode are determined by
$$
\begin{array}{lll}
\t(g)=g\otimes g, & \t(h)=h\otimes g+1\otimes h, & \e(x)=0,\\
\e(h)=0, & S(g)=g^{-1}=g, &  S(h)=gh.
\end{array}
$$
Moreover, $H_4$ has a canonical basis $\{1$, $g$, $h$, $gh\}$.\\

Let $D_4$ be the algebra generated by
$a,b,c$ and $d$ subject to the relations:
$$\begin{array}{lllll}
ba=-ab,& db=-bd, & ca=-ac,& dc=-cd,& bc=cb,\\
a^2=0, & b^2=1, &c^2=1,& d^2=0, & da+ad=1-bc.
\end{array}$$
Then $D_4$ is a Hopf algebra with the coalgebra structure and the antipode given by
$$
\begin{array}{lll}
\t(a)=a\otimes b+1\otimes a, & \e(a)=0, & S(a)=-ab=ba,\\
\t(b)=b\otimes b, & \e(b)=1, & S(b)=b^{-1}=b,\\
\t(c)=c\otimes c,& \e(c)=1, & S(c)=c^{-1}=c,\\
\t(d)=d\otimes c+1\otimes d,& \e(d)=0, & S(d)=-dc=cd.
\end{array}
$$
$D_4$ is a $2^4$-dimensional Hopf algebra.
$D_4$ has a canonical basis $\{a^ib^jc^ld^k|0\leq
i,j,l,k\le 1\}$, and is not semisimple and is isomorphic to $D(H_4)$ as a Hopf algebra.
The Hopf algebra isomorphism is given by
$$ D(H_4)=H_4^{*cop}\bowtie H_4\rightarrow D_4, \ \
\overline{h^sg^t}\bowtie h^ig^j\mapsto
\sum_{0\leq m<2}\frac{1}{2}(-1)^{tm} c^md^s a^ib^j$$
for all $0\leq s,t,i,j\le 1$, where $\{h^ig^j|0\leq i, j\le 1\}$ is the basis of $H_4$,
and $\{\overline{h^ig^j}|0\leq i, j\le 1\}$ is the dual basis of $H_4^*$.
The canonical quasitriangular structure on $D_4$ reads as follows:
$$\begin{array}{rcl}
{\mathcal R}&=&\frac{1}{2}(1\ot 1+b\ot 1+1\ot c-b\ot c\\
&&+a\ot d+ab\ot d+a\ot cd-ab\ot cd).\\
\end{array}$$
For the detail, the reader is directed to \cite{Ch1, Ch2, ChZh}.

\subsection{Indecomposable representations of $D_4$}

Let $J(D_4)$ stand for the Jacobson radical of $D_4$.
Then  $J(D_4)^3=0$ by \cite[Corollary 2.4]{Ch4}. This means that the Loewy length
of $D_4$ is 3. In order to study the Green ring of $D_4$,
we need first to give the structures of all finite dimensional indecomposable $D_4$-modules.
We will follow the notations of \cite{Ch4}.

From \cite{Ch4}, we know that the socle series and the radical series of an indecomposable
$D_4$-module coincide. We list all indecomposable $D_4$-modules according to the Loewy length.
There are four simple $D_4$-modules (up to isomorphism);
two are of dimension one and two are of dimension two.
In the following, denote $J(D_4)$ by $J$ for short.

One dimensional simple modules: $V(1,r)$, $r\in{\mathbb Z}_2$,
\begin{equation*}
a\cdot v=d\cdot v=0,\hspace{0.2cm}b\cdot v=c\cdot v=(-1)^rv,
\hspace{0.2cm}v\in V(1,r).
\end{equation*}
In the following, denote $V(1, r)$ by $V(r)$, $r\in{\mathbb Z}_2$.

Two dimensional simple modules: $V(2,r)$, $r\in{\mathbb Z}_2$. $V(2,r)$ has
a standard $k$-basis $\{v_1, v_2\}$ such that
\begin{equation*}
\begin{array}{llll}
a\cdot v_1=v_2,& d\cdot v_1=0, & b\cdot v_1=(-1)^rv_1, & c\cdot v_1=(-1)^{r+1}v_1,\\
a\cdot v_2=0 , & d\cdot v_2=2v_1,& b\cdot v_2=(-1)^{r+1}v_2, & c\cdot v_2=(-1)^rv_2.
\end{array}
\end{equation*}
The simple module $V(2,r)$, $r\in{\mathbb Z}_2$, are both  projective  injective.

Four dimensional projective modules of Loewy length 3: Let $P(r)$ be the  projective cover of
$V(r)$, $r\in{\mathbb Z}_2$. Then $P(r)$ is the injective envelope of $V(r)$ as well, $r\in{\mathbb Z}_2$.
$P(r)$ has a standard $k$-basis $\{v_1, v_2, v_3, v_4\}$ such that
\begin{equation*}
\begin{array}{llll}
a\cdot v_1=v_2,& d\cdot v_1=v_3,& b\cdot v_1=(-1)^rv_1, & c\cdot v_1=(-1)^rv_1,\\
a\cdot v_2=0,& d\cdot v_2=-v_4, & b\cdot v_2=(-1)^{r+1}v_2, & c\cdot v_2=(-1)^{r+1}v_2,\\
a\cdot v_3=v_4, & d\cdot v_3=0, & b\cdot v_3=(-1)^{r+1}v_3, &c\cdot v_3=(-1)^{r+1}v_3,\\
a\cdot v_4=0, & d\cdot v_4=0,   & b\cdot v_4=(-1)^rv_4, & c\cdot v_4=(-1)^rv_4.
\end{array}
\end{equation*}
Note that ${\rm soc}(P(r))=J^2P(r)\cong V(r)$,
${\rm soc}^2(P(r))/{\rm soc}(P(r))=(JP(r))/(J^2P(r))
\cong 2V(r+1)$ and
$P(r)/{\rm soc}^2(P(r))=P(r)/(JP(r))\cong V(r)$.
Note that the $P(r)$ is exactly the $P(1, r)$ in \cite{Ch4}.

There are infinitely many non-isomorphic indecomposable $D_4$-modules with Loewy length 2.
We list them according to the lengths and the co-lengths of their socles.
We say that an indecomposable $D_4$-module $M$ with ${\rm rl}(M)=2$
is of $(s,t)$-type if $l(M/{\rm soc}(M))=s$ and $l({\rm soc}(M))=t$. By \cite{Ch4},
if $M$ is of $(s, t)$-type, then $s=t+1$, or $s=t$, or $s=t-1$.

The indecomposable modules of $(s+1,s)$-type are given by the syzygy functor $\O$.
Let $V(r)$ be the one dimensional simple modules, $r\in{\mathbb Z}_2$.
Then the minimal projective
resolutions of $V(r)$ are given
by
$$
\cdots\ra4P(r+1)\ra3P(r)\ra2P(r+1)\ra P(r)\ra
V(r)\ra 0.$$
By these resolutions, one can describe the structure of $\O^sV(r)$, $s\>1$
(see \cite{Ch4}). $\O^sV(r)$ is of $(s+1, s)$-type.

The indecomposable modules of $(s,s+1)$-type are given by the cosyzygy functor $\O^{-1}$.
Let $V(r)$ be the one dimensional simple modules, $r\in{\mathbb Z}_2$.
Then the minimal injective resolutions of $V(r)$ are given
by
$$
0\ra V(r)\ra P(r)\ra 2P(r+1)\ra 3P(r)\ra 4P(r+1)
\ra\cdots.$$
By these resolutions, one can describe the structure of $\O^{-s}V(r)$, $s\>1$
(see \cite{Ch4}). $\O^{-s}V(r)$ is of $(s, s+1)$-type.

Let $r\in{\mathbb Z}_2$ and $s\>1$. If $s$ is odd, then we have
$D_4$-module isomorphisms
$$\begin{array}{c}
{\rm soc}(\O^sV(r))\cong\O^{-s}V(r)/{\rm soc}(\O^{-s}V(r))
\cong sV(r),\\
\O^sV(r)/{\rm soc}(\O^sV(r))\cong {\rm soc}(\O^{-s}V(r))\cong(s+1)V(r+1).\\
\end{array}$$
If $s$ is even, then we have $D_4$-module isomorphisms
$$\begin{array}{c}
{\rm soc}(\O^sV(r))\cong\O^{-s}V(r)/{\rm soc}(\O^{-s}V(r))
\cong sV(r+1),\\
\O^sV(r)/{\rm soc}(\O^sV(r))\cong {\rm soc}(\O^{-s}V(r))\cong(s+1)V(r).\\
\end{array}$$

The indecomposable modules of $(s, s)$-type can be described as follows.
Let ${\mathbb P}^1(k)$ be the projective 1-space over $k$.
${\mathbb P}^1(k)$ can be regarded as the set of all 1-dimensional
subspaces of $k^2$. Let $\oo$ be a symbol with $\oo\not\in k$ and
let $\overline k=k\cup\{\oo\}$. Then there is a bijection between
$\overline k$ and $\mathbb{P}^1(k)$: $\a\mapsto L(\a,1)$,
$\oo\mapsto L(1,0)$, where $\a\in k$ and $L(\a,\b)$ denotes the
1-dimensional subspace of $k^2$ with basis $(\a,\b)$ for any
$0\not=(\a,\b)\in k^2$. In the following, we regard
$\mathbb{P}^1(k)=\overline k$.

If $M$ is of $(s,s)$-type then $M\cong M_s(1,r,\eta)$, where $r\in{\mathbb Z}_2$
and $\eta\in{\mathbb P}^1(k)$ (see \cite{Ch4}). Denote $M_s(1,r,\eta)$ by $M_s(r,\eta)$
in the following.

The indecomposable module $M_1(r,\oo)$, $r\in{\mathbb Z}_2$, has a standard basis $\{v_1, v_2\}$ with the $D_4$-action given by
\begin{equation*}
\begin{array}{llll}
a\cdot v_1=0, & d\cdot v_1=v_2, & b\cdot v_1=(-1)^{r+1}v_1, & c\cdot v_1=(-1)^{r+1}v_1,\\
a\cdot v_2=0, & d\cdot v_2=0, & b\cdot v_2=(-1)^{r}v_2,& c\cdot v_2=(-1)^{r}v_2.\\
\end{array}
\end{equation*}

The indecomposable module $M_1(r,\eta)$, $r\in{\mathbb Z}_2$, $\eta\in k$,
has a standard basis $\{v_1, v_2\}$ with the $D_4$-action given by
\begin{equation*}
\begin{array}{llll}
a\cdot v_1=v_2, & d\cdot v_1=-\eta v_2, & b\cdot v_1=(-1)^{r+1}v_1, & c\cdot v_1=(-1)^{r+1}v_1,\\
a\cdot v_2=0, & d\cdot v_2=0, & b\cdot v_2=(-1)^{r}v_2, & c\cdot v_2=(-1)^{r}v_2.\\
\end{array}
\end{equation*}

For any $r\in{\mathbb Z}_2$ and $\eta\in{\mathbb P}^1(k)$, there is a unique
$D_4$-module injection $M_1(r,\eta)\hookrightarrow P(r)$, up to a nonzero scale multiple.
Moreover, there is an exact sequence of $D_4$-modules
$$0\ra M_1(r,\eta)\hookrightarrow P(r)\ra M_1(r+1,\eta)\ra 0.$$
Hence $M_1(r,\eta)$ is a submodule of $P(1,r)$ and a quotient module of $P(r+1)$.

Then one can construct $M_s(r,\eta)$ recursively by using pullback, where
$r\in{\mathbb Z}_2$ and $\eta\in{\mathbb P}^1(k)$ (see \cite[pp. 2823-2824]{Ch4}).
$M_s(r,\eta)$ is a submodule of $sP(1,r)$ and a quotient module of $sP(r+1)$,
and there is an exact sequence of $D_4$-modules
$$0\ra M_s(r,\eta)\hookrightarrow sP(r)\ra M_s(r+1,\eta)\ra 0.$$
Hence $\O M_s(r+1,\eta)\cong \O^{-1} M_s(r+1,\eta)\cong M_s(r,\eta)$.
Moreover, for any $1\<i<s$, $M_s(r,\eta)$ contains a unique
submodule of $(i,i)$-type, which is isomorphic to $M_i(r,\eta)$
and the quotient module of $M_s(r,\a)$ modulo the submodule of
$(i,i)$-type is isomorphic to $M_{s-i}(r,\eta)$. Hence there is
an exact sequence of $D_4$-modules
$$0\ra M_i(r,\eta)\hookrightarrow M_s(r, \eta)\ra M_{s-i}(r,\eta)\ra 0.$$

\section{\bf The tensor products of indecomposable modules}

In this section, we investigate the tensor products of two indecomposable modules.
We will give the indecomposable decomposition of the tensor products
of indecomposable modules over $D_4$.
Note that $M\ot N\cong N\ot M$ for any $D_4$-modules $M$ and $N$
since $D_4$ is a quasitriangular Hopf algebra.

\begin{proposition}\label{2.1} Let $r, r'\in{\mathbb Z}_2$ and $\eta\in{\mathbb P}^1(k)$.
Then there are $D_4$-module isomorphisms
$$\begin{array}{rl}
V(r)\ot V(r')\cong V(r+r'),& V(r)\ot V(2, r')\cong V(2, r+r'),\\
V(r)\ot P(r')\cong P(r+r'),&V(r)\ot M_1(r',\eta)\cong M_1(r+r',\eta).\\
\end{array}$$
\end{proposition}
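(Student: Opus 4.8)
The plan is to exploit that $V(r)$ is one-dimensional, so that tensoring with it merely twists a module structure by a sign. Write $V(r)=kv_r$ with $a\cdot v_r=d\cdot v_r=0$ and $b\cdot v_r=c\cdot v_r=(-1)^rv_r$. For an arbitrary $D_4$-module $M$, consider the $k$-linear isomorphism $\varphi:M\ra V(r)\ot M$, $m\mapsto v_r\ot m$. Using the comultiplication of $D_4$, namely $\t(a)=a\ot b+1\ot a$, $\t(b)=b\ot b$, $\t(c)=c\ot c$ and $\t(d)=d\ot c+1\ot d$, together with $a\cdot v_r=d\cdot v_r=0$, one computes $a\cdot(v_r\ot m)=v_r\ot(a\cdot m)$, $d\cdot(v_r\ot m)=v_r\ot(d\cdot m)$, $b\cdot(v_r\ot m)=(-1)^rv_r\ot(b\cdot m)$ and $c\cdot(v_r\ot m)=(-1)^rv_r\ot(c\cdot m)$. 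Thus, transporting the action along $\varphi$, the module $V(r)\ot M$ is just $M$ with the $a$- and $d$-actions unchanged and the $b$- and $c$-actions each multiplied by $(-1)^r$. (Equivalently $V(r)\ot M\cong{}^{\sigma_r}M$, the module twisted by the algebra automorphism $\sigma_r$ of $D_4$ that fixes $a$ and $d$ and sends $b\mapsto(-1)^rb$, $c\mapsto(-1)^rc$; one checks that the defining relations of $D_4$ are preserved. For $r=0$ the twist is the identity, so those cases are trivial and only $r=1$ carries content.)

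Granting this, each of the four stated isomorphisms is verified by comparing the twisted action on the standard basis of the module in question with the defining action of the target module. For $V(r')=kv_{r'}$: twisting turns $b\cdot v_{r'}=c\cdot v_{r'}=(-1)^{r'}v_{r'}$ into $(-1)^{r+r'}v_{r'}$ while $a,d$ still act by zero, which is precisely $V(r+r')$. For $V(2,r')$ with basis $\{v_1,v_2\}$: the $a$- and $d$-actions ($a\cdot v_1=v_2$, $d\cdot v_2=2v_1$, all else zero) are untouched, and the signs of $b$ and $c$ on $v_1,v_2$ each acquire a factor $(-1)^r$, i.e. $r'$ is replaced by $r+r'$ throughout, giving $V(2,r+r')$. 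The cases of $P(r')$ and $M_1(r',\eta)$ are the same in spirit: on each standard basis vector, $b$ and $c$ act by a single sign, which is multiplied by $(-1)^r$, while the $a$- and $d$-actions (involving $\eta$ in the latter case) are left alone, yielding $P(r+r')$ and $M_1(r+r',\eta)$ respectively; the value $\eta=\oo$ is covered by the same computation using the basis description of $M_1(r,\oo)$.

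There is no genuine obstacle here; the entire content is the bookkeeping carried out in the first paragraph. The only point requiring care is that the sign attached to $b$ and the sign attached to $c$ need not coincide on a given basis vector (as happens for $V(2,r')$), so the match with the target's conventions must be checked vector by vector rather than by invoking a single global sign. It is worth recording at this stage that the first isomorphism gives $[V(1)]^2=[k]$ in $r(D_4)$, so $[V(1)]$ is a unit of order dividing $2$; the remaining three isomorphisms then say that $V(1)\ot(-)$ permutes the other indecomposables in the evident way, which will allow later tensor-product computations to be normalised modulo this action.
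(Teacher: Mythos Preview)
Your proof is correct and is precisely the straightforward verification the paper alludes to; the paper's own proof consists of the single sentence ``It follows from a straightforward verification,'' so you have simply written out in full the sign-twisting computation that the author left to the reader.
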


\begin{proof} It follows from a straightforward verification.
\end{proof}

\begin{lemma}\label{2.2}  Let $r\in{\mathbb Z}_2$, and let $M$ be a $D_4$-module.
Then $V(r)\ot M$ is indecomposable if and only if $M$ is indecomposable.
Moreover, if $M$ is indecomposable then ${\rm rl}(V(r)\ot M)={\rm rl}(M)$, and
if $M$ is of $(s, t)$-type then so is $V(r)\ot M$.
\end{lemma}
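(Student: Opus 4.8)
The plan is to exploit the fact that the one-dimensional module $V(r)$ is invertible for the tensor product. First I would note that $V(0)=V(1,0)$ is precisely the trivial module $k$, the unit object of the monoidal category $\mathrm{mod}\,D_4$, and that Proposition~\ref{2.1} gives $V(r)\ot V(r)\cong V(r+r)=V(0)\cong k$. Consequently the $k$-linear functor $F_r:=V(r)\ot-\colon\mathrm{mod}\,D_4\to\mathrm{mod}\,D_4$ satisfies $F_r\circ F_r\cong\mathrm{Id}$, the natural isomorphism being obtained from the associativity constraint of the tensor product together with $V(r)\ot V(r)\cong k$. Hence $F_r$ is an auto-equivalence of $\mathrm{mod}\,D_4$ that is its own quasi-inverse; it is exact (tensoring over $k$ is exact) and $\dim(V(r)\ot M)=\dim M$, so $F_r$ sends nonzero modules to nonzero modules.

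From here both assertions follow from standard properties of category equivalences. Since $F_r$ commutes with finite direct sums, a decomposition $M\cong A\oplus B$ into nonzero summands gives $V(r)\ot M\cong(V(r)\ot A)\oplus(V(r)\ot B)$ with $V(r)\ot A,\,V(r)\ot B$ nonzero, and conversely a nontrivial decomposition of $V(r)\ot M$ is transported by $F_r$ to one of $F_r(V(r)\ot M)\cong M$; thus $V(r)\ot M$ is indecomposable if and only if $M$ is. For the Loewy length, recall that the radical $JN$ of a module $N$ is the intersection of its maximal submodules; an equivalence preserves the submodule lattice and carries simple modules to simple modules, hence $F_r(JN)=J(F_rN)$, and inductively $F_r(J^iN)=J^i(F_rN)$ for all $i\ge 0$. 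Taking $N=M$ gives $\mathrm{rl}(V(r)\ot M)=\mathrm{rl}(M)$, and the same argument applied to the socle series shows that $F_r$ commutes with $\mathrm{soc}^i(-)$ as well.

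Finally, suppose $M$ is indecomposable of $(s,t)$-type, so $\mathrm{rl}(M)=2$, $l(M/\mathrm{soc}(M))=s$ and $l(\mathrm{soc}(M))=t$. Then $\mathrm{rl}(V(r)\ot M)=2$ by the above, $\mathrm{soc}(V(r)\ot M)\cong V(r)\ot\mathrm{soc}(M)$, and $(V(r)\ot M)/\mathrm{soc}(V(r)\ot M)\cong V(r)\ot(M/\mathrm{soc}(M))$; since an equivalence preserves composition length, these two modules have lengths $t$ and $s$, so $V(r)\ot M$ is again of $(s,t)$-type. I do not expect a real obstacle here; the only point needing a little care is pinning down the natural isomorphism $F_r\circ F_r\cong\mathrm{Id}$ from the associativity constraint and $V(r)\ot V(r)\cong k$, after which everything is formal. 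One may alternatively avoid categorical language and argue by hand: choose a basis of $M$ adapted to its radical filtration, and use that $b,c$ act on $V(r)$ by the scalar $(-1)^r$ while $a,d$ act by $0$ to write the $D_4$-action on $V(r)\ot M$ explicitly; this recovers the same conclusions by direct computation.
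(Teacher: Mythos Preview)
Your proof is correct. The indecomposability argument is essentially the paper's: both use $V(r)\ot V(r)\cong V(0)$ to reduce each direction to the other. For the Loewy-length and $(s,t)$-type claims, however, you take a genuinely different route. You observe once and for all that $F_r=V(r)\ot-$ is an auto-equivalence of $\mathrm{mod}\,D_4$ and then invoke that equivalences preserve submodule lattices, simple objects, radicals, socles, and composition length; the conclusions fall out formally. The paper instead argues case by case on $\mathrm{rl}(M)$: for $\mathrm{rl}(M)\in\{1,3\}$ it quotes Proposition~\ref{2.1}, and for $\mathrm{rl}(M)=2$ it uses the concrete fact that, since $\dim V(r)=1$, every subspace of $V(r)\ot M$ has the form $v\ot N$ for a unique subspace $N\subseteq M$, checks directly that $v\ot N$ is a (simple) submodule iff $N$ is, and reads off $\mathrm{soc}(V(r)\ot M)=V(r)\ot\mathrm{soc}(M)\cong tV(r+r')$ and $(V(r)\ot M)/\mathrm{soc}(V(r)\ot M)\cong sV(r+r'+1)$. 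Your approach is cleaner and works without the case split; the paper's hands-on computation makes the explicit identifications of socle and top visible, and these are quoted later (``Lemma~\ref{2.2} and its proof'') in Corollary~\ref{2.3} and Proposition~\ref{2.4}. Your argument does yield the same identifications via $F_r(\mathrm{soc}\,M)=\mathrm{soc}(F_rM)$ together with Proposition~\ref{2.1}, so nothing is lost --- you may simply want to state that consequence explicitly for downstream use.
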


\begin{proof}
If $M=N\oplus L$ for some nonzero submodules $N$ and $L$ of $M$, then
$V(r)\ot N$ and $V(r)\ot L$ are nonzero submodules of $V(r)\ot M$, and
$V(r)\ot M=V(r)\ot(N\oplus L)=(V(r)\ot N)\oplus (V(r)\ot L)$.
Hence if $V(r)\ot M$ is indecomposable then so is $M$.
Conversely, since $M\cong V(0)\ot M\cong (V(r)\ot V(r))\ot M\cong V(r)\ot(V(r)\ot M)$
by Proposition \ref{2.1}, the same argument as above shows that if $M$ is indecomposable
then so is $V(r)\ot M$.

Now assume that $M$ is indecomposable. If ${\rm rl}(M)=1$ or $3$, then $M$ is simple or projective, and hence
${\rm rl}(V(r)\ot M)={\rm rl}(M)$ by Proposition \ref{2.1}.

If ${\rm rl}(M)=2$ and $M$ is of $(s, t)$-type,
then there is an $r'\in\mathbb Z$ such that soc$(M)=JM\cong tV(r')$ and
$M/{\rm soc}(M)\cong sV(r'+1)$.
Let $0\neq v\in V(r)$. Then $V(r)=kv$ by dim$V(r)=1$. Hence any subspace of $V(r)\ot M$
has the form $v\ot N$ for some subspace $N$ of $M$.
It follows from a straightforward verification
that $v\ot N$ is a (simple) submodule of $V(r)\ot M$ if and only if $N$ is a (simple) submodule
of $M$. Thus, by Proposition \ref{2.1} we have ${\rm soc}(V(r)\ot M)=V(r)\ot{\rm soc}(M)
\cong V(r)\ot(tV(r'))\cong tV(r+r')$ and
$(V(r)\ot M)/{\rm soc}(V(r)\ot M)=(V(r)\ot M)/(V(r)\ot{\rm soc}(M))\cong
V(r)\ot(M/{\rm soc}(M))\cong V(r)\ot(sV(r'+1))\cong sV(r+r'+1)$.
Hence ${\rm rl}(V(r)\ot M)=2$ and $V(r)\ot M$ is of $(s, t)$-type.
\end{proof}

\begin{corollary}\label{2.3}
Let $r, r'\in{\mathbb Z}_2$ and $s\>1$. Then there are $D_4$-module isomorphisms
$$V(r)\ot\O^sV(r')\cong\O^sV(r+r'),\ V(r)\ot\O^{-s}V(r')\cong\O^{-s}V(r+r').$$
\end{corollary}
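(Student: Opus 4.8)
The plan is to read this off from Lemma~\ref{2.2} together with the explicit structure of the syzygies recalled in Section~\ref{1}. Fix $r,r'\in{\mathbb Z}_2$ and $s\>1$. Since $\O^sV(r')$ is indecomposable of $(s+1,s)$-type, Lemma~\ref{2.2} shows that $V(r)\ot\O^sV(r')$ is again indecomposable of $(s+1,s)$-type. By the classification recalled above (from \cite{Ch4}), the indecomposable $D_4$-modules of $(s+1,s)$-type are exactly $\O^sV(0)$ and $\O^sV(1)$; hence $V(r)\ot\O^sV(r')\cong\O^sV(t)$ for a unique $t\in{\mathbb Z}_2$, and it remains only to determine $t$.

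To pin down $t$ I would compare socles. The proof of Lemma~\ref{2.2} gives ${\rm soc}(V(r)\ot\O^sV(r'))=V(r)\ot{\rm soc}(\O^sV(r'))$. Using the stated description (${\rm soc}(\O^sV(r'))\cong sV(r')$ for $s$ odd, $\cong sV(r'+1)$ for $s$ even) together with $V(r)\ot V(r'')\cong V(r+r'')$ from Proposition~\ref{2.1}, this socle is $sV(r+r')$ when $s$ is odd and $sV(r+r'+1)$ when $s$ is even; matching it against ${\rm soc}(\O^sV(t))$ in each parity case forces $t=r+r'$, which is the first isomorphism. The second one follows in exactly the same way: $\O^{-s}V(r')$ is indecomposable of $(s,s+1)$-type, so $V(r)\ot\O^{-s}V(r')$ is too by Lemma~\ref{2.2} and hence equals $\O^{-s}V(t)$ for some $t$; comparing ${\rm soc}(V(r)\ot\O^{-s}V(r'))=V(r)\ot{\rm soc}(\O^{-s}V(r'))$ with ${\rm soc}(\O^{-s}V(t))$, via the stated socle formulas and Proposition~\ref{2.1}, again yields $t=r+r'$.

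A more conceptual alternative, which also avoids the classification, is to note that $V(r)\ot-$ is an exact auto-equivalence of ${\rm mod}\,D_4$ whose quasi-inverse is again $V(r)\ot-$, since $V(r)\ot V(r)\cong V(0)=k$ by Proposition~\ref{2.1}. By Proposition~\ref{2.1} this functor permutes the four indecomposable projectives (sending $P(r')$ to $P(r+r')$), and since these are also exactly the indecomposable injectives of $D_4$ (recall $P(r)=I(r)$ and that $V(2,r)$ is projective--injective), it preserves projectives and injectives. Being an auto-equivalence it therefore carries a minimal projective (resp.\ injective) resolution of $M$ to one of $V(r)\ot M$, so $V(r)\ot\O^{\pm1}M\cong\O^{\pm1}(V(r)\ot M)$ for every $M$; iterating $s$ times and applying $V(r)\ot V(r')\cong V(r+r')$ produces both isomorphisms simultaneously.

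I do not expect a genuine obstacle here: both routes are short. The only points needing a little care are tracking the parity of $s$ in the socle computation of the first approach (one could equally well compare tops), and, in that approach, correctly invoking the classification of $(s\pm1,s)$-type indecomposables from \cite{Ch4} so that ``indecomposable of that type'' determines the module up to the two choices $r\in{\mathbb Z}_2$.
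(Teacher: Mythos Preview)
Your first argument is exactly the paper's: the proof in the text reads simply ``It follows from Lemma~\ref{2.2} and its proof,'' and what you spell out---indecomposability and $(s{+}1,s)$-type (resp.\ $(s,s{+}1)$-type) from Lemma~\ref{2.2}, then identification via the socle computed in that lemma's proof together with the classification from \cite{Ch4}---is precisely what is being invoked. Your second route, via $V(r)\ot-$ as an exact auto-equivalence preserving projectives and injectives (hence commuting with $\O^{\pm1}$), is a correct and slightly more conceptual alternative that avoids appealing to the classification of $(s{\pm}1,s)$-type indecomposables; the trade-off is that one must observe that an auto-equivalence carries minimal resolutions to minimal resolutions, which is immediate once Proposition~\ref{2.1} shows projectives go to projectives.
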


\begin{proof}
It follows from Lemma \ref{2.2} and its proof.
\end{proof}

\begin{proposition}\label{2.4}
Let $r, r'\in{\mathbb Z}_2$ and $\eta\in{\mathbb P}^1(k)$. Then
$V(r)\ot M_s(r',\eta)\cong M_s(r+r',\eta)$ as $D_4$-modules for all
$s\>1$.
\end{proposition}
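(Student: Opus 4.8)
The plan is to induct on $s$, using the recursive (pullback) construction of the modules $M_s(r',\eta)$ together with the already-established base case $s=1$ from Proposition~\ref{2.1}. The key structural input is the exact sequence
$$0\ra M_s(r',\eta)\hookrightarrow sP(r')\ra M_s(r'+1,\eta)\ra 0$$
and the fact (Lemma~\ref{2.2}) that tensoring with the one-dimensional module $V(r)$ preserves indecomposability, Loewy length, and $(s,t)$-type. So $V(r)\ot M_s(r',\eta)$ is automatically indecomposable of $(s,s)$-type, and by the classification quoted from \cite{Ch4} it must be isomorphic to $M_s(\rho,\zeta)$ for a uniquely determined $\rho\in{\mathbb Z}_2$ and $\zeta\in{\mathbb P}^1(k)$. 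The whole task is therefore to pin down $\rho$ and $\zeta$: I claim $\rho=r+r'$ and $\zeta=\eta$.

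First I would settle $\rho$. From the proof of Lemma~\ref{2.2}, the socle of $V(r)\ot M$ is $V(r)\ot{\rm soc}(M)$; since ${\rm soc}(M_s(r',\eta))\cong sV(r')$ and $V(r)\ot V(r')\cong V(r+r')$ by Proposition~\ref{2.1}, we get ${\rm soc}(V(r)\ot M_s(r',\eta))\cong sV(r+r')$. Comparing with the known socle of $M_s(\rho,\zeta)$, which is $sV(\rho)$, forces $\rho=r+r'$.

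Next, to identify $\zeta$ with $\eta$, I would exploit naturality of the tensor functor $V(r)\ot(-)$ applied to the defining exact sequences. Applying $V(r)\ot(-)$ to $0\ra M_s(r',\eta)\hookrightarrow sP(r')\ra M_s(r'+1,\eta)\ra 0$ and using $V(r)\ot P(r')\cong P(r+r')$ (Proposition~\ref{2.1}), $V(r)\ot M_s(r'+1,\eta)\cong M_s(r+r'+1,\eta')$ for some $\eta'$, yields an embedding $V(r)\ot M_s(r',\eta)\hookrightarrow sP(r+r')$ whose cokernel is $M_s(r+r'+1,\eta')$; by the uniqueness of such sequences recorded before this proposition, $\zeta$ and the parameter of the cokernel must coincide, so it is enough to handle the case $s=1$ and then let the recursion propagate. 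For $s=1$ one can argue directly: the submodule $M_1(r',\eta)\hookrightarrow P(r')$ is unique up to scalar, it is cut out by the condition involving $\eta$ in the explicit basis (for $\eta\in k$, the line spanned by $v_2$ with $a\cdot v_1=v_2$, $d\cdot v_1=-\eta v_2$; for $\eta=\infty$, the line with $a\cdot v_1=0$, $d\cdot v_1=v_2$), and tensoring the standard basis of $P(r')$ with a nonzero $v\in V(r)$ scales neither the $a$-action nor the $d$-action — the $b,c$-eigenvalues all flip by $(-1)^r$, which is exactly the shift $r'\mapsto r+r'$ and leaves the ratio defining $\eta$ untouched. Hence $V(r)\ot M_1(r',\eta)\cong M_1(r+r',\eta)$, which is the base case, and combined with the naturality argument this gives $\zeta=\eta$ for all $s$.

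The main obstacle is the bookkeeping in the second step: making sure that "the parameter $\eta$" is genuinely a functorial invariant, i.e. that the isomorphism $M_s(\rho,\zeta)\cong V(r)\ot M_s(r',\eta)$ is compatible with the embeddings into multiples of the projectives in a way that identifies $\zeta$ with $\eta$ rather than with some Möbius transform of it. This is why I would lean on the explicit basis description for $s=1$ — there the action of $a$ and $d$ is visibly unchanged by tensoring with $V(r)$, so no reparametrization of ${\mathbb P}^1(k)$ can occur — and then invoke the uniqueness of the pullback construction of $M_s(r,\eta)$ from $M_1(r,\eta)$ to carry the conclusion up the chain. Once that compatibility is in hand, the proof is just assembling Proposition~\ref{2.1}, Lemma~\ref{2.2}, and the exact sequences quoted from \cite{Ch4}.
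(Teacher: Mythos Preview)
Your overall strategy matches the paper's: use Lemma~\ref{2.2} to conclude that $V(r)\ot M_s(r',\eta)$ is indecomposable of $(s,s)$-type with socle $sV(r+r')$, hence isomorphic to $M_s(r+r',\zeta)$ for some $\zeta\in{\mathbb P}^1(k)$, and then pin down $\zeta=\eta$. The socle argument for $\rho=r+r'$ is fine.

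The weak point is your identification of $\zeta$. The exact sequence $0\ra M_s(r',\eta)\ra sP(r')\ra M_s(r'+1,\eta)\ra 0$ that you tensor does not reduce $s$: all it yields is that the parameter $\zeta$ of $V(r)\ot M_s(r',\eta)$ agrees with the parameter $\eta'$ of $V(r)\ot M_s(r'+1,\eta)$, both still at level $s$. That is not an inductive step toward $s=1$, so ``let the recursion propagate'' has no traction from this sequence. Your last paragraph correctly senses that one must go through the pullback tower $M_1\subset M_2\subset\cdots\subset M_s$, but you never actually run that argument, and verifying that $V(r)\ot(-)$ carries the specific pullback square defining $M_s$ to the one defining $M_s(r+r',\eta)$ still needs to be written down.

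The paper sidesteps all of this with a single observation you do not use: by \cite[Theorem~3.10(2)]{Ch4}, $M_s(r',\eta)$ has a \emph{unique} submodule of $(1,1)$-type, and it is isomorphic to $M_1(r',\eta)$. Tensoring, $V(r)\ot M_s(r',\eta)\cong M_s(r+r',\zeta)$ therefore contains a copy of $V(r)\ot M_1(r',\eta)\cong M_1(r+r',\eta)$. But the unique $(1,1)$-submodule of $M_s(r+r',\zeta)$ is $M_1(r+r',\zeta)$, so $M_1(r+r',\zeta)\cong M_1(r+r',\eta)$, forcing $\zeta=\eta$ by \cite[Theorem~3.10(4)]{Ch4}. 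No induction on $s$, no pullback diagrams --- just the base case $s=1$ plus uniqueness of the $(1,1)$-type submodule. This is the cleaner route, and it replaces the part of your argument that currently does not close.
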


\begin{proof}
It follows from Proposition \ref{2.1} that $V(r)\ot M_1(r',\eta)\cong M_1(r+r',\eta)$.
Now assume $s>1$. Since $M_s(r', \eta)$ is of $(s, s)$-type and soc$(M_s(r', \eta))\cong sV(r')$,
it follows from Lemma \ref{2.2} and its proof that $V(r)\ot M_s(r',\eta)$ is indecomposable of
$(s, s)$-type, and soc$(V(r)\ot M_s(r',\eta))\cong sV(r+r')$.
Then from \cite[Proposition 3.11 and Theorem 3.10(1)]{Ch4}, one gets that
$V(r)\ot M_s(r',\eta)\cong M_s(r+r', \a)$ for some $\a\in{\mathbb P}^1(k)$.
Thus, by \cite[Theorem 3.10(2)]{Ch4}, we know that $V(r)\ot M_s(r',\eta)$ contains a unique
submodule of $(1, 1)$-type, which is isomorphic to $M_1(r+r', \a)$.
On the other hand, again by \cite[Theorem 3.10(2)]{Ch4},
$M_s(r', \eta)$ contains a submodule isomorphic to $M_1(r', \eta)$.
Hence $V(r)\ot M_s(r',\eta)$ contains a submodule isomorphic to $M_1(r+r', \eta)$
since $V(r)\ot M_1(r', \eta)\cong M_1(r+r', \eta)$. It follows that $M_1(r+r', \a)\cong M_1(r+r', \eta)$,
which forces $\a=\eta$ by \cite[Theorem 3.10(4)]{Ch4}. This completes the proof.
\end{proof}

In the following, unless otherwise stated, all isomorphisms
are $D_4$-module isomorphisms.

\begin{proposition}\label{2.5}
Let $r, r'\in{\mathbb Z}_2$. Then $V(2, r)\ot V(2, r')\cong P(r+r'+1)$.
\end{proposition}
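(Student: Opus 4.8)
The plan is to compute the tensor product $V(2,r)\otimes V(2,r')$ directly on bases and then identify the resulting $4$-dimensional module with $P(r+r'+1)$ via the explicit basis description of $P(r+r'+1)$ given in Section~1.3. By Proposition~\ref{2.1} it suffices to treat one value of the pair $(r,r')$, say $r=r'=0$, since $V(2,r)\otimes V(2,r')\cong V(r)\otimes V(r')\otimes V(2,0)\otimes V(2,0)\cong V(r+r')\otimes\bigl(V(2,0)\otimes V(2,0)\bigr)$, and $V(r+r')\otimes P(1)\cong P(r+r'+1)$; so I would reduce the whole statement to the single isomorphism $V(2,0)\otimes V(2,0)\cong P(1)$. (One can equally keep $r,r'$ general throughout; the bookkeeping is only marginally heavier.)

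First I would fix standard bases $\{v_1,v_2\}$ for the first copy of $V(2,0)$ and $\{w_1,w_2\}$ for the second, with the $D_4$-action as listed in Section~1.3, and write down the action of the generators $a,b,c,d$ on the tensor product using the comultiplication $\triangle(a)=a\otimes b+1\otimes a$, $\triangle(d)=d\otimes c+1\otimes d$, $\triangle(b)=b\otimes b$, $\triangle(c)=c\otimes c$. Then $b$ and $c$ act diagonally on the four vectors $v_i\otimes w_j$; since $b$ acts on $v_1,v_2$ by $+1,-1$ and $c$ by $-1,+1$ (the $r=0$ case), one reads off the $b$- and $c$-eigenvalues on each $v_i\otimes w_j$: on $v_1\otimes w_1$ both $b$ and $c$ act by $-1$, on $v_2\otimes w_2$ both act by $-1$, while on $v_1\otimes w_2$ and $v_2\otimes w_1$ one of $b,c$ acts by $+1$ and the other by $-1$. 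Comparing with the $b,c$-weights in the description of $P(1)$ (where $v_1,v_4$ have weight $(-1,-1)$ and $v_2,v_3$ have mixed weights), this already pins down which vectors should map to the socle/top layer and which to the middle layer.

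Next I would compute $a$ and $d$ on all four basis vectors. The key computations are $a\cdot(v_1\otimes w_1)=v_2\otimes w_1\pm v_1\otimes w_2$ (signs from the $b$-eigenvalue of $v_1$), $d\cdot(v_1\otimes w_1)=\pm v_2\otimes w_1+2v_1\otimes\!$-something, and so on, using $a\cdot v_2=0$, $d\cdot v_1=0$, $d\cdot v_2=2v_1$, $a\cdot v_1=v_2$. From these one identifies an explicit generator $m:=v_1\otimes w_1$ (or a suitable linear combination) that generates the whole module, checks that $J\cdot m$ is $2$-dimensional and $J^2\cdot m$ is $1$-dimensional and nonzero, hence $\mathrm{rl}=3$ and the module is $4$-dimensional with a simple top; being $4$-dimensional of Loewy length $3$ it must be one of $P(0)$ or $P(1)$ by the classification in Section~1.3, and the $b,c$-weight of the top (which is $(-1,-1)$, i.e.\ the weight of $V(1)$) forces it to be $P(1)$. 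Alternatively, and perhaps more cleanly, I would produce an explicit change of basis sending $v_i\otimes w_j$ to the standard basis $\{v_1,v_2,v_3,v_4\}$ of $P(1)$ and verify the defining relations of the $P(1)$-action term by term.

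The main obstacle is purely computational: keeping track of the signs coming from the $b$-twist in $\triangle(a)$ and the $c$-twist in $\triangle(d)$, and choosing the right normalization of the generator so that the action matches the specific normalization (note the coefficients $2$ and $-1$) in the stated basis of $P(r)$. There is no conceptual difficulty — the classification of $4$-dimensional Loewy-length-$3$ indecomposables in Section~1.3 does all the structural work — so the proof is essentially a bounded finite check, and indeed the paper's own proof is likely just ``by a direct computation.''
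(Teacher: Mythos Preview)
Your approach is exactly the paper's: reduce to $r=r'=0$ via Proposition~\ref{2.1}, compute the $D_4$-action on the tensor basis, and exhibit an explicit change of basis identifying the result with $P(1)$ (the paper uses $u_1=v_2\otimes v_1-v_1\otimes v_2$, $u_2=2v_2\otimes v_2$, $u_3=-4v_1\otimes v_1$, $u_4=-4(v_1\otimes v_2+v_2\otimes v_1)$). One correction to your weight bookkeeping: since $v_1$ has $(b,c)$-weight $(+1,-1)$ and $v_2$ has $(-1,+1)$, the tensor vectors $v_1\otimes w_1$ and $v_2\otimes w_2$ have weight $(+1,+1)$ while $v_1\otimes w_2$ and $v_2\otimes w_1$ have weight $(-1,-1)$ --- the reverse of what you wrote --- so the cyclic generator must come from the $(-1,-1)$ eigenspace (e.g.\ $v_2\otimes w_1-v_1\otimes w_2$), not $v_1\otimes w_1$.
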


\begin{proof}
We first show that $V(2, 0)\ot V(2, 0)\cong P(1)$. Let $\{v_1, v_2\}$ be the standard
basis of $V(2, 0)$ as stated in Section 1. Then $\{v_i\ot v_j|1\<i, j\<2\}$ is a basis of
$V(2, 0)\ot V(2, 0)$. Let $u_1=v_2\ot v_1-v_1\ot v_2$, $u_2=2v_2\ot v_2$, $u_3=-4v_1\ot v_1$
and $u_4=-4(v_1\ot v_2 +v_2\ot v_1)$. Then $\{u_1, u_2, u_3, u_4\}$ is also a basis
of $V(2, 0)\ot V(2, 0)$. Now by a straightforward verification, we have
\begin{equation*}
\begin{array}{llll}
a\cdot u_1=u_2,& d\cdot u_1=u_3,& b\cdot u_1=-u_1, & c\cdot u_1=-u_1,\\
a\cdot u_2=0,& d\cdot u_2=-u_4, & b\cdot u_2=u_2, & c\cdot u_2=u_2,\\
a\cdot u_3=u_4, & d\cdot u_3=0, & b\cdot u_3=u_3, &c\cdot u_3=u_3,\\
a\cdot u_4=0, & d\cdot u_4=0,   & b\cdot u_4=-u_4, & c\cdot u_4=-u_4.
\end{array}
\end{equation*}
This shows that $V(2, 0)\ot V(2, 0)\cong P(1)$. Then by Proposition \ref{2.1}, we have
$$\begin{array}{rcl}
V(2, r)\ot V(2, r')&\cong & V(r)\ot V(2, 0)\ot V(r')\ot V(2, 0)\\
&\cong &  V(r)\ot V(r')\ot V(2, 0)\ot V(2, 0)\\
&\cong &  V(r+r')\ot P(1)\\
&\cong & P(r+r'+1).\\
\end{array}$$
\end{proof}

\begin{lemma}\label{2.6}
Let $M$ be an indecomposable $D_4$-module with ${\rm rl}(M)=2$. If $M$ is of $(s, t)$-type and
${\rm soc}(M)\cong tV(r)$ for some $r\in{\mathbb Z}_2$, then
$$V(2, r')\ot M \cong tV(2, r+r')\oplus sV(2, r+r'+1)$$ for any $r'\in{\mathbb Z}_2$.
\end{lemma}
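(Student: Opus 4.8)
The plan is to reduce everything to the case $r'=0$ by Proposition \ref{2.1}. Indeed, once we know $V(2,0)\ot M\cong tV(2,r)\oplus sV(2,r+1)$, tensoring with $V(r')$ and using $V(r')\ot V(2,j)\cong V(2,r'+j)$ together with associativity and commutativity of $\ot$ gives the general statement. So from now on I would take $r'=0$ and analyze $V(2,0)\ot M$.

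The first substantive step is to pin down the socle and the top of $V(2,0)\ot M$. Since $V(2,0)$ is projective-injective and $D_4$ is a Hopf algebra, $V(2,0)\ot M$ is again projective-injective: tensoring a projective module with any module over a Hopf algebra yields a projective module (and dually injective), as $D_4$ is a Frobenius, in fact symmetric, algebra. Hence $V(2,0)\ot M$ is a direct sum of the indecomposable projective-injectives, which are exactly $V(2,0)$, $V(2,1)$, $P(0)$, $P(1)$; moreover it has Loewy length $\le 2$ because $\dim(V(2,0)\ot M)=2\dim M$ and, more to the point, $V(2,j)$ has Loewy length $1$ while $P(j)$ has Loewy length $3$, so no copy of $P(0)$ or $P(1)$ can occur (one checks this against the socle computation below). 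Thus $V(2,0)\ot M\cong pV(2,0)\oplus qV(2,1)$ for some $p,q\ge 0$ with $p+q=\dim M=s+t$.

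To determine $p$ and $q$ I would compute $\mathrm{soc}(V(2,0)\ot M)$. Using $\mathrm{soc}(M)=JM\cong tV(r)$, I claim $\mathrm{soc}(V(2,0)\ot M)=V(2,0)\ot\mathrm{soc}(M)\cong t\,(V(2,0)\ot V(r))\cong tV(2,r)$; the key point is that $J\cdot(V(2,0)\ot M)=V(2,0)\ot JM$, which follows because $V(2,0)$ is a simple module annihilated by $J$ in the appropriate sense — more carefully, one verifies directly from the module structure that the radical of the tensor product is $V(2,0)\ot JM$ (using that $V(2,0)$ is simple projective, $J^2=0$ on $M$, and the explicit action of $a,b,c,d$). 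Hence $\mathrm{soc}(V(2,0)\ot M)\cong tV(2,r)$, which by the direct-sum decomposition forces $p=t$, $q=s$. Then $\dim$ count $2(s+t)=2p+2q=2(s+t)$ is automatically consistent, giving $V(2,0)\ot M\cong tV(2,r)\oplus sV(2,r+1)$, and the general case follows as above.

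The main obstacle is the claim that no copy of $P(0)$ or $P(1)$ occurs, equivalently that $V(2,0)\ot M$ has Loewy length $2$, together with the precise identification $J\cdot(V(2,0)\ot M)=V(2,0)\ot JM$. One clean way around this: since $V(2,0)$ is projective-injective, $V(2,0)\ot M$ is projective-injective, so it equals $P(V(2,0)\ot M)$; computing $(V(2,0)\ot M)/J(V(2,0)\ot M)$ and $\mathrm{soc}(V(2,0)\ot M)$ both as $\cong$ sums of the simple modules $V(2,j)$ (never $V(j)$, because an explicit check shows $V(2,0)\ot M$ has no one-dimensional composition factors — the operator $b$ has no eigenvalue on a suitable weight space, or one uses that $V(2,0)$ restricted to the subalgebra $k[b,c]$ carries both characters) simultaneously rules out $P(0),P(1)$ and fixes the multiplicities. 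I would present the weight-space / explicit-basis argument as the cleanest route, since $\dim M$ can be large and an abstract argument via Cartan matrices or the table preceding Lemma \ref{2.6} makes the no-$P(j)$ point transparent.
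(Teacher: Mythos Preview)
Your main line of argument contains a genuine error: the identity $J\cdot(V(2,0)\ot M)=V(2,0)\ot JM$ is false. In fact $V(2,0)\ot M$ is semisimple, so $J\cdot(V(2,0)\ot M)=0$, whereas $V(2,0)\ot JM\cong tV(2,r)\neq 0$. Consequently your socle computation ${\rm soc}(V(2,0)\ot M)\cong tV(2,r)$ is also wrong: the socle is the whole module, and the Loewy length is $1$, not $2$. So the step where you read off $p=t$, $q=s$ from the socle does not go through as written.

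Your fallback at the end --- ruling out $P(0)$ and $P(1)$ by checking that $V(2,0)\ot M$ has no one-dimensional composition factors, then counting multiplicities of $V(2,0)$ and $V(2,1)$ in the Grothendieck group --- is correct and would complete the proof. But this is considerably more work than what the paper actually does. The paper's argument is a one-liner: tensor the short exact sequence
\[
0\ra tV(r)\ra M\ra sV(r+1)\ra 0
\]
with $V(2,r')$, identify the outer terms via Proposition~\ref{2.1} as $tV(2,r+r')$ and $sV(2,r+r'+1)$, and observe that the right-hand term is projective, so the sequence splits. There is no need to invoke projectivity of $V(2,r')\ot M$ itself, no socle computation, and no composition-factor bookkeeping; the splitting gives the decomposition immediately. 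The reduction to $r'=0$ is also unnecessary, since the argument works uniformly in $r'$.
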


\begin{proof}
Let $M$ be an indecomposable $D_4$-module with ${\rm rl}(M)=2$, and assume that $M$ is of $(s, t)$-type
with ${\rm soc}(M)\cong tV(r)$ for some $r\in{\mathbb Z}_2$. Then
$M/{\rm soc}(M)\cong sV(r+1)$ by the structure of indecomposable $D_4$-modules described in Section 1.
Hence there is an exact sequence of $D_4$-modules
$$0\ra tV(r)\hookrightarrow M\ra sV(r+1)\ra 0.$$
Let $r'\in{\mathbb Z}_2$. Applying $V(2, r')\ot$ to the above sequence, one gets another
exact sequence of $D_4$-modules
\begin{equation}\label{simple2}
0\ra  V(2, r')\ot(tV(r))\hookrightarrow V(2, r')\ot M\ra V(2, r')\ot(sV(r+1))\ra 0.
\end{equation}
From Proposition \ref{2.1}, one knows that $V(2, r')\ot(tV(r))\cong tV(2, r+r')$ and
$V(2, r')\ot(sV(r+1))\cong sV(2, r+r'+1)$. Since $V(2, r+r'+1)$ is projective,
the sequence (\ref{simple2}) is split. It follows that
$V(2, r')\ot M\cong tV(2, r+r')\oplus sV(2, r+r'+1)$.
\end{proof}

\begin{corollary}\label{2.7}
Let $r, r'\in{\mathbb Z}_2$, $s\>1$ and $\eta\in{\mathbb P}^1(k)$. Then we have\\
$(1)$ If $s$ is odd, then
$$V(2, r')\ot\O^sV(r)\cong V(2, r')\ot\O^{-s}V(r)\cong sV(2, r+r')\oplus(s+1)V(2, r+r'+1).$$
$(2)$ If $s$ is even, then
$$V(2, r')\ot\O^sV(r)\cong V(2, r')\ot\O^{-s}V(r)\cong sV(2, r+r'+1)\oplus(s+1)V(2, r+r').$$
$(3)$ $V(2, r')\ot M_s(r, \eta)\cong sV(2, 0)\oplus sV(2, 1)$.
\end{corollary}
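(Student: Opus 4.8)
The plan is to derive all three statements by a single mechanism: each of $\O^sV(r)$, $\O^{-s}V(r)$ and $M_s(r,\eta)$ (for $s\>1$) is an indecomposable $D_4$-module of Loewy length $2$, so Lemma~\ref{2.6} applies directly once we record its $(s,t)$-type together with the isotype of its socle, both of which were recalled in Section~1. Thus the whole proof reduces to substituting the correct parameters into Lemma~\ref{2.6} and simplifying the resulting second arguments modulo $2$.

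For part $(1)$, assume $s$ is odd. From Section~1, $\O^sV(r)$ is of $(s+1,s)$-type with ${\rm soc}(\O^sV(r))\cong sV(r)$; feeding this into Lemma~\ref{2.6} (whose parameter pair is here $(s+1,s)$ and whose socle parameter is $r$) gives $V(2,r')\ot\O^sV(r)\cong sV(2,r+r')\oplus(s+1)V(2,r+r'+1)$. Similarly $\O^{-s}V(r)$ is of $(s,s+1)$-type with ${\rm soc}(\O^{-s}V(r))\cong(s+1)V(r+1)$, so Lemma~\ref{2.6} yields $V(2,r')\ot\O^{-s}V(r)\cong(s+1)V(2,r+r'+1)\oplus sV(2,r+r'+2)$, and since $r+r'+2=r+r'$ in ${\mathbb Z}_2$ this is the same module, proving $(1)$. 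Part $(2)$ is identical in structure: for $s$ even, $\O^sV(r)$ is of $(s+1,s)$-type with socle $sV(r+1)$ and $\O^{-s}V(r)$ is of $(s,s+1)$-type with socle $(s+1)V(r)$, and Lemma~\ref{2.6} applied in each case gives $sV(2,r+r'+1)\oplus(s+1)V(2,r+r')$ after reducing the indices modulo $2$.

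For part $(3)$, $M_s(r,\eta)$ is of $(s,s)$-type with ${\rm soc}(M_s(r,\eta))\cong sV(r)$, so Lemma~\ref{2.6} (with both parameters equal to $s$ and socle parameter $r$) gives $V(2,r')\ot M_s(r,\eta)\cong sV(2,r+r')\oplus sV(2,r+r'+1)$; since $\{r+r',\,r+r'+1\}=\{0,1\}$ in ${\mathbb Z}_2$, this direct sum equals $sV(2,0)\oplus sV(2,1)$ irrespective of $r$ and $r'$.

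The argument is essentially bookkeeping, and so there is no substantial obstacle; the only point that needs care is correctly matching the symbols $s$, $t$ and $r$ appearing in Lemma~\ref{2.6} with the actual data of $\O^{\pm s}V(r)$ — whose $(s,t)$-type involves the shifted indices $s+1$ and $s$, and whose socle isotype (parameter $r$ versus $r+1$) depends on the parity of $s$ — and then consistently reducing all resulting second arguments in ${\mathbb Z}_2$.
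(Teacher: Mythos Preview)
Your proposal is correct and follows essentially the same approach as the paper: in each case one records the $(s,t)$-type and the socle isotype of $\O^{\pm s}V(r)$ or $M_s(r,\eta)$ from Section~1 and then applies Lemma~\ref{2.6}, reducing the resulting second indices modulo~$2$. The paper's proof is only slightly terser in that it states the socle and top data and invokes Lemma~\ref{2.6} without spelling out the parameter matching, but the argument is the same.
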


\begin{proof}
If $s$ is odd, then soc$(\O^sV(r))\cong \O^{-s}V(r)/{\rm soc}(\O^{-s}V(r))\cong sV(r)$
and $\O^sV(r)/{\rm soc}(\O^sV(r))\cong{\rm soc}(\O^{-s}V(r))\cong (s+1)V(r+1)$.
Hence Part (1) follows from Lemma \ref{2.6}.
If $s$ is even, then soc$(\O^sV(r))\cong \O^{-s}V(r)/{\rm soc}(\O^{-s}V(r))\cong sV(r+1)$
and $\O^sV(r)/{\rm soc}(\O^sV(r))\cong{\rm soc}(\O^{-s}V(r))\cong (s+1)V(r)$.
Hence Part (2) follows from Lemma \ref{2.6}.
Since ${\rm soc}(M_s(r, \eta))\cong sV(r)$ and $M_s(r, \eta)/{\rm soc}(M_s(r, \eta))\cong sV(r+1)$,
it follows from Lemma \ref{2.6} that
$V(2, r')\ot M_s(r, \eta)\cong sV(2, r+r')\oplus sV(2, r+r'+1)\cong sV(2, 0)\oplus sV(2, 1)$.
This shows Part (3).
\end{proof}

\begin{corollary}\label{2.8}
Let $r, r'\in{\mathbb Z}_2$. Then $V(2, r')\ot P(r)\cong 2V(2, 0)\oplus 2V(2, 1)$.
\end{corollary}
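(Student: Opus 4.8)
The plan is to use the short exact sequence realizing $P(r)$ as the projective cover of the one–dimensional simple module $V(r)$, namely
$$0\ra \O V(r)\hookrightarrow P(r)\ra V(r)\ra 0,$$
in which $\O V(r)=JP(r)$ is exactly the indecomposable module of $(2,1)$-type appearing as the $s=1$ case of the syzygies $\O^sV(r)$ recalled in Section 1 (this is clear from the minimal projective resolution of $V(r)$, whose first kernel is $JP(r)$, with ${\rm soc}(JP(r))=J^2P(r)\cong V(r)$ and $JP(r)/J^2P(r)\cong 2V(r+1)$).

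Next I would apply the exact functor $V(2,r')\ot-$ to this sequence, obtaining
$$0\ra V(2,r')\ot\O V(r)\hookrightarrow V(2,r')\ot P(r)\ra V(2,r')\ot V(r)\ra 0.$$
By Corollary~\ref{2.7}(1) with $s=1$ we have $V(2,r')\ot\O V(r)\cong V(2,r+r')\oplus 2V(2,r+r'+1)$, and by Proposition~\ref{2.1} together with the commutativity of $\ot$ over the quasitriangular algebra $D_4$ we have $V(2,r')\ot V(r)\cong V(2,r+r')$. Since every module of the form $V(2,n)$ is both projective and injective, the above sequence splits (the submodule is injective; equivalently the quotient is projective). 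Hence
$$V(2,r')\ot P(r)\cong V(2,r+r')\oplus 2V(2,r+r'+1)\oplus V(2,r+r')\cong 2V(2,r+r')\oplus 2V(2,r+r'+1),$$
and since $\{r+r',r+r'+1\}=\{0,1\}$ this is $2V(2,0)\oplus 2V(2,1)$.

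There is essentially no obstacle in this argument: the only inputs are the identification of $\ker(P(r)\ra V(r))$ with the relevant $(2,1)$-type module handled by Corollary~\ref{2.7}, and the splitting of the resulting sequence, both of which are immediate. An equally short alternative avoids syzygies entirely: apply $V(2,r')\ot-$ to the sequence $0\ra M_1(r,\eta)\hookrightarrow P(r)\ra M_1(r+1,\eta)\ra 0$ from Section 1 and invoke Corollary~\ref{2.7}(3) to get $V(2,r')\ot M_1(r,\eta)\cong V(2,0)\oplus V(2,1)\cong V(2,r')\ot M_1(r+1,\eta)$; the sequence again splits because $V(2,0)\oplus V(2,1)$ is injective, yielding $V(2,r')\ot P(r)\cong 2V(2,0)\oplus 2V(2,1)$ directly.
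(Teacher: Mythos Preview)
Your proof is correct and follows essentially the same approach as the paper: apply $V(2,r')\ot-$ to the exact sequence $0\ra\O V(r)\ra P(r)\ra V(r)\ra 0$, identify the outer terms via Proposition~\ref{2.1} and Corollary~\ref{2.7}(1), and split the sequence using projectivity of $V(2,r+r')$. Your alternative via the sequence $0\ra M_1(r,\eta)\ra P(r)\ra M_1(r+1,\eta)\ra 0$ is also valid but is not the route the paper takes.
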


\begin{proof}
Applying $V(2, r')\ot$ to the exact sequence
$0\ra\O V(r)\hookrightarrow P(r)\ra V(r)\ra 0$, one gets the following exact sequence of $D_4$-modules
$$0\ra V(2, r')\ot\O V(r)\hookrightarrow V(2, r')\ot P(r)\ra V(2, r')\ot V(r)\ra 0.$$
By Proposition \ref{2.1} and Corollary \ref{2.7}, we have $V(2, r')\ot\O V(r)\cong V(2, r+r')\oplus 2V(2, r+r'+1)$
and $V(2, r')\ot V(r)\cong V(2, r+r')$. Since $V(2, r+r')$ is projective, the above sequence is split.
Hence $V(2, r')\ot P(r)\cong 2V(2, r+r')\oplus 2V(2, r+r'+1)\cong 2V(2, 0)\oplus 2V(2, 1)$.
\end{proof}

\begin{lemma}\label{2.9}
Let $M$ be an indecomposable $D_4$-module with ${\rm rl}(M)=2$. If $M$ is of $(s, t)$-type and
${\rm soc}(M)\cong tV(r)$ for some $r\in{\mathbb Z}_2$, then
$$P(r')\ot M \cong tP(r+r')\oplus sP(r+r'+1)$$ for any $r'\in{\mathbb Z}_2$.
\end{lemma}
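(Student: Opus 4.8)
The plan is to mimic the proof of Lemma~\ref{2.6}, with the projective-injective simple modules $V(2,\cdot)$ replaced by the projective-injective modules $P(\cdot)$. Since $M$ is indecomposable of $(s,t)$-type with ${\rm soc}(M)\cong tV(r)$, the description of the indecomposable $D_4$-modules recalled in Section~1 gives $M/{\rm soc}(M)\cong sV(r+1)$, so there is a short exact sequence of $D_4$-modules
$$0\ra tV(r)\hookrightarrow M\ra sV(r+1)\ra 0.$$

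Next I would apply the functor $P(r')\ot-$ to this sequence. Tensoring over $k$ is exact and the $D_4$-module structure on a tensor product is induced by the coproduct, so we obtain an exact sequence of $D_4$-modules
$$0\ra P(r')\ot(tV(r))\hookrightarrow P(r')\ot M\ra P(r')\ot(sV(r+1))\ra 0.$$
By Proposition~\ref{2.1} we have $P(r')\ot V(r)\cong P(r+r')$ and $P(r')\ot V(r+1)\cong P(r+r'+1)$, hence the left-hand term is isomorphic to $tP(r+r')$ and the right-hand term to $sP(r+r'+1)$.

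Finally, since $P(r+r'+1)$ is projective (equivalently, since $P(r+r')$ is injective), the displayed sequence splits, which yields $P(r')\ot M\cong tP(r+r')\oplus sP(r+r'+1)$, as desired.

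There is no real obstacle here; the only point that deserves care — rather than a genuine difficulty — is the splitting, which relies on the fact recalled in Section~1 that each $P(r)$ is simultaneously the projective cover and the injective envelope of $V(r)$, so either end term of the sequence forces it to split. As a consistency check, $\dim(P(r')\ot M)=4(s+t)=\dim(tP(r+r')\oplus sP(r+r'+1))$. An alternative approach would be to observe that $P(r')\ot M$ is projective (a tensor product of a projective module with an arbitrary module over a Hopf algebra with bijective antipode is projective), hence a direct sum of copies of $P(0)$, $P(1)$, $V(2,0)$, $V(2,1)$, and then to determine the multiplicities from composition factors or socles; but the short-exact-sequence argument above is shorter and runs exactly parallel to the proof of Lemma~\ref{2.6}.
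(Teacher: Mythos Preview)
Your argument is correct and is exactly the approach the paper has in mind: the paper's proof of Lemma~\ref{2.9} consists of the single sentence ``It is similar to Lemma~\ref{2.6},'' and what you wrote is precisely that similarity spelled out, replacing $V(2,r')$ by $P(r')$ and using Proposition~\ref{2.1} together with the projectivity (injectivity) of $P(r+r'+1)$ (resp.\ $P(r+r')$) to split the tensored short exact sequence.
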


\begin{proof}
It is similar to Lemma \ref{2.6}.
\end{proof}

\begin{corollary}\label{2.10}
Let $r, r'\in{\mathbb Z}_2$, $s\>1$ and $\eta\in{\mathbb P}^1(k)$. Then we have\\
$(1)$ If $s$ is odd, then
$$P(r')\ot\O^sV(r)\cong P(r')\ot\O^{-s}V(r)\cong sP(r+r')\oplus(s+1)P(r+r'+1).$$
$(2)$ If $s$ is even, then
$$P(r')\ot\O^sV(r)\cong P(r')\ot\O^{-s}V(r)\cong sP(r+r'+1)\oplus(s+1)P(r+r').$$
$(3)$ $P(r')\ot M_s(r, \eta)\cong sP(0)\oplus sP(1)$.
\end{corollary}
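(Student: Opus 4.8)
The plan is to mimic the proof of Corollary~\ref{2.7}, replacing the role of the injective module $V(2,r')$ by the projective module $P(r')$ and invoking Lemma~\ref{2.9} in place of Lemma~\ref{2.6}. For Part~(1), recall from Section~1 that when $s$ is odd one has ${\rm soc}(\O^sV(r))\cong {\rm soc}(\O^{-s}V(r)/{\rm soc}(\O^{-s}V(r))$-complement $\cong sV(r)$ and the head $\cong (s+1)V(r+1)$; thus $\O^sV(r)$ is of $(s+1,s)$-type with socle $\cong sV(r)$, while $\O^{-s}V(r)$ is of $(s,s+1)$-type with socle $\cong (s+1)V(r+1)$. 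Apply Lemma~\ref{2.9} to each: for $\O^sV(r)$ the socle is $tV(r)$ with $t=s$ and $s_{\text{head}}=s+1$, giving $P(r')\ot\O^sV(r)\cong sP(r+r')\oplus(s+1)P(r+r'+1)$; for $\O^{-s}V(r)$ the socle is $(s+1)V(r+1)$, so Lemma~\ref{2.9} gives $P(r')\ot\O^{-s}V(r)\cong (s+1)P(r+r'+1)\oplus sP(r+r'+1+1)\cong (s+1)P(r+r'+1)\oplus sP(r+r')$, which coincides with the first expression. Part~(2) is the same computation with the parities of the socle and head interchanged, as recorded in Section~1 for $s$ even.

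For Part~(3), recall that $M_s(r,\eta)$ is of $(s,s)$-type with ${\rm soc}(M_s(r,\eta))\cong sV(r)$ and $M_s(r,\eta)/{\rm soc}(M_s(r,\eta))\cong sV(r+1)$. Lemma~\ref{2.9} then yields $P(r')\ot M_s(r,\eta)\cong sP(r+r')\oplus sP(r+r'+1)$, and since $\{r+r', r+r'+1\}=\{0,1\}$ in ${\mathbb Z}_2$ regardless of the values of $r,r'$, this equals $sP(0)\oplus sP(1)$. This last simplification is exactly the one used at the end of the proof of Corollary~\ref{2.7}(3).

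The only genuine input not already in hand is Lemma~\ref{2.9} itself, whose proof is asserted to be ``similar to Lemma~\ref{2.6}''; I would spell it out briefly. Given the exact sequence $0\ra tV(r)\hookrightarrow M\ra sV(r+1)\ra 0$ coming from the Loewy structure of $M$, apply $P(r')\ot-$ to obtain
\begin{equation*}
0\ra P(r')\ot(tV(r))\hookrightarrow P(r')\ot M\ra P(r')\ot(sV(r+1))\ra 0.
\end{equation*}
By Proposition~\ref{2.1}, $P(r')\ot V(r)\cong P(r+r')$ and $P(r')\ot V(r+1)\cong P(r+r'+1)$, both projective—hence also injective over $D_4$, since $D_4$ is self-injective (it is a Drinfeld double, hence a symmetric algebra, as noted in Section~1.1). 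Thus the sequence splits and $P(r')\ot M\cong tP(r+r')\oplus sP(r+r'+1)$.

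I do not anticipate a real obstacle here: everything reduces to splitting short exact sequences against projective-injective summands, exactly as in Lemmas~\ref{2.6} and~\ref{2.9}, together with the bookkeeping of socle parities from Section~1 and the trivial fact that $\{r+r',r+r'+1\}=\{0,1\}$ in ${\mathbb Z}_2$. If anything requires care it is only making sure the parity conventions ($s$ odd versus even) line up so that the $\O^s$ and $\O^{-s}$ answers are literally identical rather than merely ``the same up to swapping the two isotypic blocks''; the computation above shows they do coincide.
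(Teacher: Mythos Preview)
Your proposal is correct and follows exactly the approach indicated by the paper, which simply says the proof is ``similar to Corollary~\ref{2.7} by using Lemma~\ref{2.9}''; you have merely spelled out those details and, helpfully, also unpacked the proof of Lemma~\ref{2.9} along the way. The parity bookkeeping for $\O^{\pm s}V(r)$ and the final simplification $\{r+r',r+r'+1\}=\{0,1\}$ are handled correctly.
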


\begin{proof}
It is similar to Corollary \ref{2.7} by using Lemma \ref{2.9}.
\end{proof}

\begin{corollary}\label{2.11}
Let $r, r'\in{\mathbb Z}_2$. Then $P(r')\ot P(r)\cong 2P(0)\oplus 2P(1)$.
\end{corollary}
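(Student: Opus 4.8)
The plan is to imitate the proof of Corollary~\ref{2.8}, with the projective cover $P(r)$ playing the role that $V(2,r)$ played there. First I would apply $P(r')\ot-$ to the exact sequence
$$0\ra\O V(r)\hookrightarrow P(r)\ra V(r)\ra 0$$
of $D_4$-modules used in the proof of Corollary~\ref{2.8}, obtaining the exact sequence
$$0\ra P(r')\ot\O V(r)\ra P(r')\ot P(r)\ra P(r')\ot V(r)\ra 0.$$
By Proposition~\ref{2.1} (together with $M\ot N\cong N\ot M$) we have $P(r')\ot V(r)\cong V(r)\ot P(r')\cong P(r+r')$, and by Corollary~\ref{2.10}(1) applied with $s=1$ we have $P(r')\ot\O V(r)\cong P(r+r')\oplus 2P(r+r'+1)$.

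Since $P(r+r')$ is projective, the displayed short exact sequence splits, whence
$$P(r')\ot P(r)\cong \bigl(P(r')\ot\O V(r)\bigr)\oplus P(r+r')\cong 2P(r+r')\oplus 2P(r+r'+1).$$
Finally, whatever the value of $r+r'\in\mathbb Z_2$, one has $\{r+r',\,r+r'+1\}=\{0,1\}$, so $2P(r+r')\oplus 2P(r+r'+1)\cong 2P(0)\oplus 2P(1)$, which is the assertion.

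I do not anticipate any real obstacle: every ingredient has already been established (the first step of the projective resolution of $V(r)$, the behaviour of $P(r')\ot-$ on modules of Loewy length $2$ recorded in Corollary~\ref{2.10}, and Proposition~\ref{2.1}). The one point deserving an explicit word is the splitting of the sequence, for which it suffices to observe that the quotient term $P(r+r')$ is projective; alternatively, since $D_4$ is a symmetric algebra, $P(r')\ot\O V(r)$ is a direct sum of indecomposable projectives and hence also injective, which splits the sequence on the other side.
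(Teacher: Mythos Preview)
Your argument is correct and follows exactly the approach the paper indicates: it says the proof ``is similar to Corollary~\ref{2.8} by using Proposition~\ref{2.1} and Corollary~\ref{2.10},'' and you have spelled out precisely that parallel.
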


\begin{proof}
It is similar to Corollary \ref{2.8} by using Proposition \ref{2.1} and Corollary \ref{2.10}.
\end{proof}

For a $D_4$-module $M$, let $M_{(r)}=\{m\in M|b\cdot m=c\cdot m=(-1)^rm\}$, $r\in{\mathbb Z}_2$.
If $M$ is a $D_4$-module without composition factors of dimension 2, then obviously $M=M_{(0)}\oplus M_{(1)}$
as vector spaces. If $M$ and $N$ are $D_4$-modules and
$f: M\ra N$ is a $D_4$-module map, then $f(M_{(r)})\subseteq N_{(r)}$ for any $r\in{\mathbb Z}_2$.

In what follows, we regard $\O^0V(r)=V(r)$ for any $r\in{\mathbb Z}_2$.

\begin{lemma}\label{2.12}
Let $M$ be an indecomposable $D_4$-module with ${\rm rl}(M)=2$, and assume $M/(JM)\cong sV(r)$
for some $s\>1$ and $r\in{\mathbb Z}_2$. If $f: sP(r)\oplus tP(r+1)\ra M$ is a $D_4$-module epimorphism for some
$t\>1$, then ${\rm Ker}(f)\cong \O M\oplus tP(r+1)$.\\
\end{lemma}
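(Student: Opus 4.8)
The plan is to compute the kernel by first reducing to the "minimal" surjection and then splitting off the extra projective summand. Since $M$ is indecomposable of Loewy length $2$ with $M/(JM)\cong sV(r)$, the projective cover of $M$ is $P(M)=sP(r)$, and the standard surjection $\pi:sP(r)\ra M$ has kernel $\Omega M$ by definition of the syzygy. So the first step is to establish the assertion in the special case $t=0$, i.e. that $\mathrm{Ker}(\pi)=\Omega M$; but this is just the definition of $\Omega M$ together with the fact (from \cite{Ch4}, recalled in Section~1) that $sP(r)$ is indeed the projective cover of $M$, which follows from $M/(JM)\cong sV(r)$ and $P(V(r))=P(r)$.

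Next I would treat the general epimorphism $f:sP(r)\oplus tP(r+1)\ra M$. The key observation is that $f$ restricted to the summand $tP(r+1)$ must have image contained in $JM$. Indeed, $tP(r+1)/(J\cdot tP(r+1))\cong tV(r+1)$, and since $M/(JM)\cong sV(r)$ has all its top composition factors isomorphic to $V(r)$, the composite $tP(r+1)\ra M\ra M/(JM)$ is zero (a nonzero map would force a copy of $V(r+1)$ in the top of $M$, using that $\mathrm{Hom}(V(r+1),V(r))=0$ for the distinct simples $V(r),V(r+1)$). Hence $f(tP(r+1))\subseteq JM$. Now I would use projectivity: since $M\ra M/(JM)$ and the restriction $f|_{sP(r)}:sP(r)\ra M\ra M/(JM)$ is surjective (as $sP(r)\ra sV(r)$ is the projective cover composed with an iso), and $sP(r)$ is projective, one can adjust $f$ by an automorphism of the domain of the form $\begin{pmatrix}1&0\\ \ast&1\end{pmatrix}$ (a map $sP(r)\oplus tP(r+1)\ra sP(r)\oplus tP(r+1)$ which is the identity on each summand and adds to the $sP(r)$-component a map $tP(r+1)\ra J\,sP(r)$) so that after this change of coordinates $f|_{sP(r)}=\pi$ is the canonical surjection and $f|_{tP(r+1)}$ lands in... — the cleaner route is: choose a lift $g:tP(r+1)\ra sP(r)$ of the map $tP(r+1)\xrightarrow{f} M \xleftarrow{\pi} sP(r)$ (possible since $\pi$ is onto and $tP(r+1)$ is projective), so that $f(x,y)=\pi(x+g(y))$ for $x\in sP(r)$, $y\in tP(r+1)$. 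Then the automorphism $(x,y)\mapsto(x+g(y),y)$ of $sP(r)\oplus tP(r+1)$ carries $f$ to the map $(x,y)\mapsto \pi(x)$, whose kernel is manifestly $\mathrm{Ker}(\pi)\oplus tP(r+1)=\Omega M\oplus tP(r+1)$. Since an automorphism of the domain carries $\mathrm{Ker}(f)$ isomorphically onto this, we conclude $\mathrm{Ker}(f)\cong\Omega M\oplus tP(r+1)$.

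The one point that needs care — and which I expect to be the main obstacle to writing this cleanly — is the existence of the lift $g:tP(r+1)\ra sP(r)$ with $\pi g = f|_{tP(r+1)}$. This is exactly projectivity of $tP(r+1)$ applied to the surjection $\pi:sP(r)\twoheadrightarrow M$, so it is automatic; the only subtlety is bookkeeping that the resulting block-triangular endomorphism of $sP(r)\oplus tP(r+1)$ really is an automorphism, which it is because it is unipotent (its "off-diagonal" part $g$ is nilpotent in the appropriate sense, or simply: its inverse is $(x,y)\mapsto(x-g(y),y)$). With that in hand the computation of the kernel is immediate. I would also remark that the hypothesis ${\rm rl}(M)=2$ and indecomposability are used only to pin down $P(M)=sP(r)$ and hence identify $\mathrm{Ker}(\pi)$ with $\Omega M$; the splitting argument itself is formal. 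This finishes the proof.
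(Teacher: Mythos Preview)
Your approach is essentially the paper's: show that $f|_{sP(r)}$ is surjective (hence a projective cover of $M$, with kernel $\cong\Omega M$), lift $f|_{tP(r+1)}$ through this surjection by projectivity, and use the resulting unipotent automorphism of the domain to identify $\mathrm{Ker}(f)$ with $\Omega M\oplus tP(r+1)$. The paper phrases the last step as replacing $L=tP(r+1)$ by the complement $L'=\{v-\phi(v):v\in L\}$ and checking $L'\cap N=0$ via socles, which is the same change of coordinates you describe.

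There is one slip in your ``cleaner route'': you lift $f|_{tP(r+1)}$ through the \emph{fixed} projective cover $\pi$ and then assert $f(x,y)=\pi(x+g(y))$. That formula requires $f|_{sP(r)}=\pi$, which you have not arranged (your earlier block-triangular adjustment would do it, but you abandoned that). The fix is exactly what the paper does: having shown $f|_{sP(r)}$ is onto, lift $f|_{tP(r+1)}$ through $f|_{sP(r)}$ rather than through $\pi$; then $f(x,y)=f|_{sP(r)}(x+g(y))$ holds on the nose, and $\mathrm{Ker}(f|_{sP(r)})\cong\Omega M$ because any surjection from the projective cover is itself a projective cover. With that correction your argument is complete.
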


\begin{proof}
Assume $f: sP(r)\oplus tP(r+1)\ra M$ is a $D_4$-module epimorphism, $t\>1$.
Let $N=sP(r)$ and $L=tP(r+1)$, and regard $N$ and $L$ as submodules of $N\oplus L$.
Then $f$ induces a $D_4$-module epimorphism
$\ol{f}: (N/(JN))\oplus (L/(JL))\ra M/(JM)$.
Since $N/(JN)\cong M/(JM)\cong sV(r)$ and $L/(JL)\cong tV(r+1)$,
$\ol{f}(L/(JL))=0$, and hence $\ol{f}(N/(JN))=M/(JM)$.
It follows that $f(L)\subseteq JM={\rm soc}(M)$ and
$f|_N: N\ra M$ is surjective. Consequently,
$f|_N: N\ra M$ is a projective cover of $M$,
and so ${\rm Ker}(f|_N)\cong \O M$.

Now let us consider the $D_4$-module map $f|_L: L\ra M$.
Since $L$ is a projective module and $f|_N: N\ra M$ is an epimorphism,
there is a $D_4$-module map $\phi: L\ra N$ such that $(f|_N)\phi=f|_L$.
Define a map $g: L\ra N\oplus L$ by $g(v)=v-\phi(v)$ for all $v\in L$.
It is easy to check that $g$ is a $D_4$-module monomorphism.
Let $L'={\rm Im}(g)$. Then $L'$ is a
submodule of $N\oplus L$ and $L'\cong L\cong tP(r+1)$, and hence ${\rm soc}(L')\cong tV(r+1)$.
However, ${\rm soc}(N)\cong sV(r)$. It follows that $L'\cap N=0$. Thus, we have $N\oplus L=N\oplus L'$
by comparing their lengths. Obviously, $L'\subseteq{\rm Ker}(f)$.
It follows that ${\rm Ker}(f)=({\rm Ker}(f)\cap N)\oplus L'={\rm Ker}(f|_N)\oplus L'
\cong \O M\oplus tP(r+1)$.
\end{proof}

\begin{lemma}\label{2.13}
Let $r, r'\in{\mathbb Z}_2$ and $s\>1$.\\
$(1)$ If $s$ is odd, then
$\O^sV(r)\ot \O V(r')\cong\O^{s+1}V(r+r')\oplus sP(r+r')$.\\
$(2)$ If $s$ is even, then
$\O^sV(r)\ot \O V(r')\cong\O^{s+1}V(r+r')\oplus sP(r+r'+1)$.
\end{lemma}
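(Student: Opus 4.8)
The strategy is to exhibit $\O^sV(r)\ot\O V(r')$ as a kernel of the kind handled by Lemma~\ref{2.12}. I would start from the short exact sequence
$$0\ra\O V(r')\hookrightarrow P(r')\ra V(r')\ra0$$
coming from the minimal projective resolution of $V(r')$ (the same sequence used in the proof of Corollary~\ref{2.8}), and tensor it with $\O^sV(r)$. Since the tensor product of $D_4$-modules is exact in each variable, this gives a short exact sequence of $D_4$-modules
$$0\ra\O^sV(r)\ot\O V(r')\ra\O^sV(r)\ot P(r')\ra\O^sV(r)\ot V(r')\ra0.$$
By Corollary~\ref{2.3} the right-hand term is $\O^sV(r+r')$. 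For the middle term, note that $\O^sV(r)$ is indecomposable of $(s+1,s)$-type with ${\rm rl}(\O^sV(r))=2$, and by the structure recalled in Section~1 its socle is $sV(r)$ when $s$ is odd and $sV(r+1)$ when $s$ is even; hence Lemma~\ref{2.9} yields $\O^sV(r)\ot P(r')\cong sP(r+r')\oplus(s+1)P(r+r'+1)$ when $s$ is odd and $\O^sV(r)\ot P(r')\cong sP(r+r'+1)\oplus(s+1)P(r+r')$ when $s$ is even.

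Next I would apply Lemma~\ref{2.12} with $M=\O^sV(r+r')$, which is indecomposable of Loewy length $2$ with $M/(JM)\cong(s+1)V(r+r'+1)$ for $s$ odd and $M/(JM)\cong(s+1)V(r+r')$ for $s$ even. Writing $\rho=r+r'+1$ in the odd case and $\rho=r+r'$ in the even case, the middle term above is exactly $(s+1)P(\rho)\oplus sP(\rho+1)$, so the surjection $\O^sV(r)\ot P(r')\ra M$ has precisely the shape required by Lemma~\ref{2.12} (with its parameter playing the role of "$s$" equal to $s+1$, its "$r$" equal to $\rho$, and its "$t$" equal to $s$). Lemma~\ref{2.12} then gives $\O^sV(r)\ot\O V(r')\cong\O M\oplus sP(\rho+1)$. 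Since $\O M=\O^{s+1}V(r+r')$, and $\rho+1\equiv r+r'\pmod2$ in the odd case while $\rho+1\equiv r+r'+1\pmod2$ in the even case, this is precisely $\O^{s+1}V(r+r')\oplus sP(r+r')$ for $s$ odd and $\O^{s+1}V(r+r')\oplus sP(r+r'+1)$ for $s$ even, as claimed.

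The only delicate point is the index bookkeeping modulo $2$ when matching the decomposition of $\O^sV(r)\ot P(r')$ against the hypotheses of Lemma~\ref{2.12}: one must check that the multiplicity $s+1$ attached to the summand $P(\rho)$ is exactly the length of $M/(JM)$, so that $\O^sV(r)\ot P(r')$ is genuinely a projective cover of $M=\O^sV(r+r')$ together with $s$ extra copies of $P(\rho+1)$. This is where the parity of $s$ has to be tracked, and it is the step I would verify most carefully; the rest of the argument is a direct substitution into Corollary~\ref{2.3}, Lemma~\ref{2.9}, and Lemma~\ref{2.12}.
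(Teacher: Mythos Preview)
Your proposal is correct and follows essentially the same route as the paper's proof: tensor the sequence $0\ra\O V(r')\ra P(r')\ra V(r')\ra0$ with $\O^sV(r)$, identify the outer terms via Corollary~\ref{2.3} and Lemma~\ref{2.9} (the paper cites the equivalent Corollary~\ref{2.10}), and then invoke Lemma~\ref{2.12}. The index bookkeeping you flag as the delicate point is exactly what the paper checks as well.
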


\begin{proof}
(1) Assume $s$ is odd. Applying $\O^sV(r)\ot$ to the exact sequence
$0\ra \O V(r')\hookrightarrow P(r')\ra V(r')\ra 0$, one gets an exact sequence
of $D_4$-modules
$$0\ra\O^sV(r)\ot\O V(r')\hookrightarrow\O^sV(r)\ot P(r')\ra\O^sV(r)\ot V(r')\ra 0.$$
By Corollaries \ref{2.3} and \ref{2.10}(1), we have $\O^sV(r)\ot V(r')\cong\O^sV(r+r')$ and
$\O^sV(r)\ot P(r')\cong sP(r+r')\oplus(s+1)P(r+r'+1)$. Hence we have an exact sequence
of $D_4$-modules
$$0\ra\O^sV(r)\ot\O V(r')\ra sP(r+r')\oplus(s+1)P(r+r'+1)\ra\O^sV(r+r')\ra 0.$$
Since ${\rm rl}(\O^sV(r+r'))=2$ and $\O^sV(r+r')/(J\O^sV(r+r'))\cong(s+1)V(r+r'+1)$,
it follows from Lemma \ref{2.12} that $\O^sV(r)\ot \O V(r')\cong\O^{s+1}V(r+r')\oplus sP(r+r')$.

(2) It is similar to (1) by using Corollaries \ref{2.3}, \ref{2.10}(2) and Lemma \ref{2.12}.
\end{proof}

\begin{proposition}\label{2.14}
Let $s, t\>1$ and $r, r'\in{\mathbb Z}_2$.\\
$(1)$ It $s+t$ is even, then
$\O^sV(r)\ot \O^tV(r')\cong\O^{s+t}V(r+r')\oplus stP(r+r')$.\\
$(2)$ If $s+t$ is odd, then
$\O^sV(r)\ot \O^t V(r')\cong\O^{s+t}V(r+r')\oplus stP(r+r'+1)$.
\end{proposition}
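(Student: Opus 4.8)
The plan is to prove Proposition~\ref{2.14} by induction on $t$, using \leref{2.13} as the base case and the short exact sequences relating $\O^tV(r')$ to projectives as the inductive engine. Concretely, for the inductive step I would apply the functor $\O^sV(r)\ot(-)$ to the exact sequence
$$0\ra\O^{t+1}V(r')\hookrightarrow (t+1)P(r')\ra \O^{t-1}V(r')\ra 0$$
(coming from the minimal projective resolution of $V(r')$, which has the shape $\cdots\ra 3P(r'+1)\ra 2P(r')\ra P(r'+1)\ra P(r')\ra V(r')\ra 0$, so that $\O^{t-1}V(r')$ sits as a quotient of $(t)P(\cdots)$ mapping onto it with kernel $\O^{t+1}V(r')$ — I would need to get the precise number of copies and the parity of the index right here). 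Tensoring is exact since everything is over a field, so this yields
$$0\ra\O^sV(r)\ot\O^{t+1}V(r')\ra \O^sV(r)\ot\big((t+1)P(r')\big)\ra\O^sV(r)\ot\O^{t-1}V(r')\ra 0.$$
The middle term is a sum of projectives by \coref{2.10}, and the right-hand term is known by the inductive hypothesis (it equals $\O^{s+t-1}V(r+r')$ plus a projective summand). Since $\O^sV(r)\ot\O^{t+1}V(r')$ has no projective direct summand (as $\O^{s+t+1}V(r+r')$ should be its only non-projective piece, and one checks the tensor product of two non-projective indecomposables of this syzygy type has a controlled projective part), I can split off the projective part and identify the non-projective part via \leref{2.12}, exactly as in the proof of \leref{2.13}.

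The cleanest route, which avoids carrying the exact sequence bookkeeping twice, is probably a single induction on $t$ with \leref{2.13} (the case $t=1$) as the base, and the step $t-1\Rightarrow t+1$ handled by the displayed sequence above. Alternatively — and this may be what the author does — one can fix $s$ and induct on $t$ using the simpler sequence $0\ra\O^{t}V(r')\hookrightarrow tP(\cdot)\ra \O^{t-2}V(r')\ra 0$ so that parity of $t$ is preserved, splitting into the two cases "$s+t$ even" and "$s+t$ odd" and running each induction separately; then \leref{2.13}(1) and (2) are precisely the two base cases $t=1$. In either approach the key inputs are: (i) \coref{2.10}, to compute $\O^sV(r)\ot P(r')$ as a sum of $P(0)$'s and $P(1)$'s with the right multiplicities and parities; (ii) \leref{2.12}, to extract the syzygy $\O(-)$ of a module presented as a quotient of a projective; and (iii) the structure statements for $\O^sV(r)$ recalled in Section~1, in particular which simple sits on top ($V(r+r')$ versus $V(r+r'+1)$ according to parity) so that \leref{2.12} applies with the correct $r$.

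The main obstacle I anticipate is purely combinatorial rather than conceptual: keeping the parities and the multiplicities consistent. One must track simultaneously (a) whether $s$, $t$, and $s+t$ are even or odd, (b) the resulting index of the single $P(\cdot)$-label appearing in \coref{2.10}(1) versus (2), and (c) the count $st$ of projective copies, verifying that at each inductive step the arithmetic $s\cdot(t+1)$ or $s\cdot(t-1)$ plus the projectives already present (coming from $\O^sV(r)\ot P(r')$ via \coref{2.10}) adds up to exactly $st$ copies of a single $P(r+r')$ or $P(r+r'+1)$. A secondary subtlety is justifying that the short exact sequence obtained after splitting off projectives really does present $\O^{s+t}V(r+r')$ (equivalently $\O^{s+t\pm1}$ in the other formulation) in the form required by \leref{2.12}, i.e.\ as the kernel of an epimorphism from a sum of $P(r)$'s and $P(r+1)$'s onto a module whose top is $sV(r)$ — this needs the explicit description of $\O^mV(\cdot)/(J\O^mV(\cdot))$ from Section~1. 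Once the bookkeeping is set up carefully and cross-checked against \leref{2.13} and \prref{2.5}/\coref{2.11} at small values, the argument is routine.
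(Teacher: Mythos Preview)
Your strategy is sound, but the displayed exact sequence is off by one: the minimal projective resolution of $V(r')$ yields
\[
0\ra\O^{t+1}V(r')\hookrightarrow (t+1)P(r'+\epsilon)\ra \O^{t}V(r')\ra 0
\]
(with $\epsilon\in\{0,1\}$ the parity of $t$), not $\O^{t-1}$ on the right; likewise your alternative sequence should end in $\O^{t-1}$, not $\O^{t-2}$. With this correction the step is $t\Rightarrow t+1$, and your plan goes through --- tensor, split off the projective summand of the right-hand term, then invoke Lemma~\ref{2.12} on the residual short exact sequence. (The splitting works because the projective summand on the \emph{right} is projective, so the surjection onto it from the projective middle term splits; you do not need to know in advance that the left-hand side has no projective summand.) This is in fact exactly the method the paper uses later in the proof of Proposition~\ref{2.21}.

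The paper's own proof of Proposition~\ref{2.14}, however, takes a different route: it computes the triple product $\O^sV(r)\ot\O^{t-1}V(r')\ot\O V(0)$ in two ways, once bracketing the first two factors (induction hypothesis, then Lemma~\ref{2.13} and Corollary~\ref{2.10}) and once bracketing the last two (Lemma~\ref{2.13} on $\O^{t-1}V(r')\ot\O V(0)$, then Corollary~\ref{2.10}), and cancels the common projective summands via Krull--Schmidt to isolate $\O^sV(r)\ot\O^tV(r')$. This confines all use of Lemma~\ref{2.12} to the base case $t=1$, at the price of carrying more projective terms through the computation before cancelling. Your approach instead must verify at each step that the residual projective has exactly $s+t+1$ copies of the correct $P(\cdot)$ to match the top of $\O^{s+t}V(r+r')$ so that Lemma~\ref{2.12} applies; this is precisely the parity bookkeeping you anticipated, and it does go through.
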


\begin{proof}
We prove the proposition by induction on $t$.
If $t=1$, then the proposition follows from Lemma \ref{2.13}. Now let $t>1$.

Assume $s+t$ is even. Then $s+t-1$ is odd. By the induction hypothesis, we have
$\O^sV(r)\ot \O^{t-1} V(r')\cong\O^{s+t-1}V(r+r')\oplus s(t-1)P(r+r'+1)$. Then by Lemma \ref{2.13}
and Corollary \ref{2.10}, we have
$$\begin{array}{rl}
&\O^sV(r)\ot \O^{t-1} V(r')\ot\O V(0)\\
\cong & \O^{s+t-1}V(r+r')\ot\O V(0)\oplus s(t-1)P(r+r'+1)\ot\O V(0)\\
\cong & \O^{s+t}V(r+r')\oplus(st+(s+1)(t-1))P(r+r')\oplus s(t-1)P(r+r'+1).\\
\end{array}$$
On the other hand, if $t-1$ is odd, then $s$ is even. In this case,
by Lemma \ref{2.13} and Corollary \ref{2.10}, we have
$$\begin{array}{rl}
&\O^sV(r)\ot \O^{t-1} V(r')\ot\O V(0)\\
\cong & \O^sV(r)\ot\O^t V(r')\oplus\O^sV(r)\ot(t-1)P(r')\\
\cong & \O^{s}V(r)\ot\O^t V(r')\oplus(s+1)(t-1)P(r+r')\oplus s(t-1)P(r+r'+1).\\
\end{array}$$
If $t-1$ is even, then $s$ is odd. In this case,
by Lemma \ref{2.13} and Corollary \ref{2.10}, we have
$$\begin{array}{rl}
&\O^sV(r)\ot \O^{t-1} V(r')\ot\O V(0)\\
\cong & \O^sV(r)\ot\O^t V(r')\oplus\O^sV(r)\ot(t-1)P(r'+1)\\
\cong & \O^{s}V(r)\ot\O^t V(r')\oplus(s+1)(t-1)P(r+r')\oplus s(t-1)P(r+r'+1).\\
\end{array}$$
Thus, we have proved the following isomorphism
$$\begin{array}{rl}
& \O^{s}V(r)\ot\O^t V(r')\oplus(s+1)(t-1)P(r+r')\oplus s(t-1)P(r+r'+1)\\
\cong & \O^{s+t}V(r+r')\oplus(st+(s+1)(t-1))P(r+r')\oplus s(t-1)P(r+r'+1).\\
\end{array}$$
It follows from Krull-Schmidt-Remak Theorem that
$$\O^{s}V(r)\ot\O^t V(r')\cong \O^{s+t}V(r+r')\oplus stP(r+r').$$

Assume $s+t$ is odd.
Then similarly, one can show that
$$\begin{array}{rl}
& \O^{s}V(r)\ot\O^t V(r')\oplus(s+1)(t-1)P(r+r'+1)\oplus s(t-1)P(r+r')\\
\cong & \O^{s+t}V(r+r')\oplus(st+(s+1)(t-1))P(r+r'+1)\oplus s(t-1)P(r+r').\\
\end{array}$$
From Krull-Schmidt-Remak Theorem, we have
$$\O^{s}V(r)\ot\O^t V(r')\cong \O^{s+t}V(r+r')\oplus stP(r+r'+1).$$
\end{proof}

\begin{lemma}\label{2.15}
Let $r\in{\mathbb Z}_2$. Then $V(r)^*\cong V(r)$ and $V(2,r)^*\cong V(2, r+1)$.
\end{lemma}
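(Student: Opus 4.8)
The plan is to compute both dual modules directly from the definition $(x\cdot f)(m)=f(S(x)\cdot m)$, using the antipode of $D_4$, which on generators reads $S(a)=ba$, $S(b)=b$, $S(c)=c$, $S(d)=cd$. Concretely, I would fix a standard basis of each module, pass to the dual basis, and read off the new action.

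The one-dimensional case is immediate. Pick $0\neq v\in V(r)$ and let $v^*\in V(r)^*$ be the dual basis vector. Since $S(b)=b$ and $S(c)=c$, the elements $b$ and $c$ act on $v^*$ by the same scalar $(-1)^r$ as on $v$; since $a\cdot v=d\cdot v=0$ and $S(a)=ba$, $S(d)=cd$, both $a$ and $d$ annihilate $v^*$. Comparing with the description of $V(r)$ in Section~1 gives $V(r)^*\cong V(r)$ at once.

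For $V(2,r)$ with standard basis $\{v_1,v_2\}$, I would write out the action of $a,b,c,d$ on the dual basis $\{v_1^*,v_2^*\}$ of $V(2,r)^*$. Because $S(b)=b$ and $S(c)=c$, the elements $b$ and $c$ remain diagonal but with the two basis vectors interchanged; because $S(a)=ba$ and $S(d)=cd$, the $a$- and $d$-actions are transferred to the dual with the arrows reversed and a sign inserted, yielding $a\cdot v_1^*=0$, $a\cdot v_2^*=(-1)^{r+1}v_1^*$, $d\cdot v_1^*=2(-1)^{r+1}v_2^*$, $d\cdot v_2^*=0$. This is exactly, up to relabelling, the standard presentation of $V(2,r+1)$, and I would finish by exhibiting the explicit map $V(2,r+1)\ra V(2,r)^*$ sending the first standard basis vector to $v_2^*$ and the second to $(-1)^{r+1}v_1^*$, then checking on each of $a,b,c,d$ that it intertwines the two actions; since both modules are simple, any nonzero such map is automatically an isomorphism, so $V(2,r)^*\cong V(2,r+1)$.

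I do not anticipate a genuine obstacle: the whole lemma is a sign-bookkeeping exercise. The only point deserving care is that $V(2,r)$ and $V(2,r+1)$ restrict to isomorphic modules over the subalgebra generated by $b$ and $c$, so one must genuinely use the $a$- and $d$-actions to detect that dualizing toggles $r$; tracking the joint $(b,c)$-eigenvalue on the line ${\rm Im}(a)={\rm Ker}(a)$ inside $V(2,r)^*$ is the cleanest way to make this visible. Should one wish to avoid the explicit matrices entirely, an alternative is to observe that $D_4$ is a symmetric, hence self-injective, algebra, so $(-)^*$ is an exact duality carrying indecomposable projectives to indecomposable projectives; then $V(2,r)^*$ is a two-dimensional projective simple module, that is, $V(2,0)$ or $V(2,1)$, and a single eigenvalue computation on $V(2,r)^*$ pins down which one.
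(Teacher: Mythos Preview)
Your proposal is correct and is precisely the ``straightforward verification'' the paper alludes to; you have simply written it out in full, with the explicit dual-basis computation and the isomorphism $V(2,r+1)\ra V(2,r)^*$, $u_1\mapsto v_2^*$, $u_2\mapsto(-1)^{r+1}v_1^*$, all of which check out.
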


\begin{proof}
It follows from a straightforward verification.
\end{proof}

By Lemma \ref{2.15}, one can check the following lemma.

\begin{lemma}\label{2.16}
Let  $r\in{\mathbb Z}_2$. Then $P(r)^*\cong P(r)$, $(\O^sV(r))^*\cong\O^{-s}V(r)$ and
$(\O^{-s}V(r))^*\cong\O^{s}V(r)$
for all $s\>1$.
\end{lemma}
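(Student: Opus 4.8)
The plan is to exploit that the duality $(-)^*$ is an additive, contravariant, exact functor on $\mathrm{mod}\,D_4$ which, because $D_4$ is quasitriangular, is in fact naturally isomorphic to a covariant functor preserving indecomposability and Loewy length; moreover it sends projectives to injectives and vice versa. Since $D_4$ is self-injective (indeed symmetric), $(-)^*$ permutes the indecomposable projective-injectives among themselves. First I would settle $P(r)^*\cong P(r)$: $P(r)^*$ is an indecomposable projective (as $P(r)$ is indecomposable injective, and $M^*$ of an injective is projective), hence $P(r)^*\cong P(0)$ or $P(1)$; to pin down which, compute the top. One has $P(r)^*/J(P(r)^*)\cong (\mathrm{soc}\,P(r))^*\cong V(r)^*\cong V(r)$ by Lemma \ref{2.15}, so $P(r)^*\cong P(r)$.

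Next I would handle the syzygies. Apply $(-)^*$ to the minimal projective resolution
$$\cdots\ra 3P(r)\ra 2P(r+1)\ra P(r)\ra V(r)\ra 0$$
and use $P(r)^*\cong P(r)$ to obtain a minimal injective resolution of $V(r)^*\cong V(r)$:
$$0\ra V(r)\ra P(r)\ra 2P(r+1)\ra 3P(r)\ra\cdots.$$
This is exactly the minimal injective resolution of $V(r)$ written down in Section 1. Comparing the two resolutions term by term, the dual of the $s$-th syzygy becomes the $s$-th cosyzygy: since $\O^sV(r)$ is (up to projective summands) the kernel of the $s$-th map in the projective resolution, and $(-)^*$ is exact and carries that kernel to the cokernel of the corresponding map in the injective resolution, which is $\O^{-s}V(r)$. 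More carefully, one gets a short exact sequence $0\ra \O^sV(r)\ra P\ra \O^{s-1}V(r)\ra 0$ with $P$ projective; dualizing yields $0\ra (\O^{s-1}V(r))^*\ra P\ra (\O^sV(r))^*\ra 0$, so $(\O^sV(r))^*\cong \O^{-1}((\O^{s-1}V(r))^*)$, and an induction on $s$ starting from $(\O^0V(r))^*=V(r)^*\cong V(r)=\O^0V(r)$ and $(\O V(r))^*\cong\O^{-1}V(r)$ gives $(\O^sV(r))^*\cong\O^{-s}V(r)$ for all $s\>1$. Applying $(-)^*$ once more and using $M^{**}\cong M$ gives the companion statement $(\O^{-s}V(r))^*\cong\O^sV(r)$.

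The only genuine subtlety — the "hard part" — is bookkeeping with projective summands: syzygies are only well defined modulo projectives, so I must make sure the dualized sequences really are the minimal (projective-free) ones. Here I would invoke that $D_4$ is self-injective with no nonzero projective submodules or quotients among the non-projective indecomposables, and that $\O^sV(r)$, $\O^{-s}V(r)$ as defined in Section 1 have Loewy length $2$ with no projective summand; since $(-)^*$ preserves Loewy length and indecomposability (Lemma \ref{2.2} analogue / quasitriangularity) and sends non-projectives to non-projectives, the dual of a projective-free syzygy is again projective-free, so the term-by-term identification of the two resolutions is forced. With that in hand, the structural data recorded in Section 1 (lengths, socles, tops of $\O^{\pm s}V(r)$) match on both sides, completing the identification.
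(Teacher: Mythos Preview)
Your proposal is correct and follows essentially the same approach as the paper: dualize the minimal projective resolution of $V(r)$, use $V(r)^*\cong V(r)$ from Lemma~\ref{2.15}, and match the result against the minimal injective resolution in Section~1 to read off $P(r)^*\cong P(r)$ and $(\O^sV(r))^*\cong\O^{-s}V(r)$, then apply $(-)^{**}\cong\mathrm{id}$ for the last isomorphism. The only difference is cosmetic: you establish $P(r)^*\cong P(r)$ first via a top computation and are more explicit about the projective-summand bookkeeping, whereas the paper simply extracts both $P(r)^*\cong P(r)$ and the syzygy statement in one stroke from the dualized resolution.
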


\begin{proof}
From the discussion in Section 1, there is a minimal projective resolution of $V(r)$:
$$
\cdots\ra4P(r+1)\ra3P(r)\ra2P(r+1)\ra P(r)\ra
V(r)\ra 0.$$
Applying the duality $(-)^*$ to the above resolution,
one gets a minimal injective resolution of $V(r)^*$:
$$0\ra V(r)^*\ra P(r)^*\ra 2P(r+1)^*\ra 3P(r)^*\ra 4P(r+1)^*
\ra\cdots.$$
By Lemma \ref{2.15}, $V(r)^*\cong V(r)$. It follows from the discussion in Section 1 that
$P(r)^*\cong P(r)$ and $(\O^sV(r))^*\cong\O^{-s}V(r)$
for all $s\>1$. Then $(\O^{-s}V(r))^*\cong(\O^{s}V(r))^{**}\cong \O^sV(r)$
for all $s\>1$.
\end{proof}

\begin{corollary}\label{2.17}
Let $s, t\>1$ and $r, r'\in{\mathbb Z}_2$.\\
$(1)$ It $s+t$ is even, then
$\O^{-s}V(r)\ot \O^{-t}V(r')\cong\O^{-(s+t)}V(r+r')\oplus stP(r+r')$.\\
$(2)$ If $s+t$ is odd, then
$\O^{-s}V(r)\ot \O^{-t}V(r')\cong\O^{-(s+t)}V(r+r')\oplus stP(r+r'+1)$.
\end{corollary}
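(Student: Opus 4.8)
The plan is to deduce Corollary~\ref{2.17} from Proposition~\ref{2.14} by applying the duality functor $(-)^*$, exactly as Corollary~\ref{co:2.7} through Corollary~\ref{2.11} were deduced from their ``$\O^s$'' counterparts, or as one typically dualizes such formulas. Recall from Section~1 that since $D_4$ is quasitriangular, $(-)^*$ is a monoidal duality on ${\rm mod}\,D_4$ satisfying $(M\ot N)^*\cong N^*\ot M^*$ and $M^{**}\cong M$; in particular, applying $(-)^*$ to a direct sum decomposition yields the direct sum of the duals of the summands.

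First I would take the isomorphism from Proposition~\ref{2.14}, namely
$$\O^sV(r)\ot \O^t V(r')\cong\O^{s+t}V(r+r')\oplus (st)P(r+r'+\epsilon),$$
where $\epsilon=0$ if $s+t$ is even and $\epsilon=1$ if $s+t$ is odd, and apply the duality $(-)^*$ to both sides. On the left, $(\O^sV(r)\ot \O^tV(r'))^*\cong (\O^tV(r'))^*\ot(\O^sV(r))^*\cong \O^{-t}V(r')\ot\O^{-s}V(r)$ by Lemma~\ref{2.16}; using $M\ot N\cong N\ot M$ this is $\O^{-s}V(r)\ot\O^{-t}V(r')$. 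On the right, $(\O^{s+t}V(r+r'))^*\cong\O^{-(s+t)}V(r+r')$ again by Lemma~\ref{2.16}, and $P(r+r'+\epsilon)^*\cong P(r+r'+\epsilon)$ also by Lemma~\ref{2.16}, so the right side becomes $\O^{-(s+t)}V(r+r')\oplus (st)P(r+r'+\epsilon)$. Matching the two cases $\epsilon=0,1$ gives precisely statements $(1)$ and $(2)$.

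There is essentially no obstacle here: the only things needed are the behaviour of $(-)^*$ on tensor products and on the relevant indecomposables, both of which are recorded in Lemma~\ref{2.15} and Lemma~\ref{2.16}, plus commutativity of $\ot$ from quasitriangularity. The one point to be slightly careful about is bookkeeping the parity: $s+t$ and the reversed pair have the same sum, so the parity condition transfers unchanged, and $P(r+r')^*\cong P(r+r')$ regardless of parity, so no index shift is introduced by dualizing. I would write the proof in two or three lines, citing Proposition~\ref{2.14} and Lemma~\ref{2.16}.

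\begin{proof}
Apply the duality $(-)^*$ to the isomorphisms in Proposition~\ref{2.14}. By Lemma~\ref{2.16} we have $(\O^sV(r))^*\cong\O^{-s}V(r)$, $(\O^tV(r'))^*\cong\O^{-t}V(r')$, $(\O^{s+t}V(r+r'))^*\cong\O^{-(s+t)}V(r+r')$ and $P(r'')^*\cong P(r'')$ for any $r''\in{\mathbb Z}_2$. Since $(M\ot N)^*\cong N^*\ot M^*$ and $M\ot N\cong N\ot M$ for any $D_4$-modules $M$ and $N$, applying $(-)^*$ to
$$\O^sV(r)\ot \O^tV(r')\cong\O^{s+t}V(r+r')\oplus stP(r+r')$$
(valid when $s+t$ is even) yields
$$\O^{-s}V(r)\ot \O^{-t}V(r')\cong\O^{-(s+t)}V(r+r')\oplus stP(r+r'),$$
which is $(1)$. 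Similarly, when $s+t$ is odd, applying $(-)^*$ to
$$\O^sV(r)\ot \O^tV(r')\cong\O^{s+t}V(r+r')\oplus stP(r+r'+1)$$
gives
$$\O^{-s}V(r)\ot \O^{-t}V(r')\cong\O^{-(s+t)}V(r+r')\oplus stP(r+r'+1),$$
which is $(2)$.
\end{proof}
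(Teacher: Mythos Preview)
Your proposal is correct and takes essentially the same approach as the paper: the paper's proof simply says that the corollary follows from Proposition~\ref{2.14} and Lemma~\ref{2.16} by applying the duality $(-)^*$, noting $(M\ot N)^*\cong N^*\ot M^*\cong M^*\ot N^*$. Your write-up is a faithful expansion of this.
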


\begin{proof}
We have already known that $(M\ot N)^*\cong N^*\ot M^*\cong M^*\ot N^*$ for any
$M, N\in{\rm mod}D_4$. Thus, the corollary follows from Proposition \ref{2.14} and
Lemma \ref{2.16} by using the duality $(-)^*$.
\end{proof}

\begin{lemma}\label{2.18}
Let $r, r'\in{\mathbb Z}_2$ and $s\>1$.\\
$(1)$ If $s$ is odd, then $\O^{-s}V(r)\ot \O V(r')\cong\O^{-s+1}V(r+r')\oplus (s+1)P(r+r'+1)$.\\
$(2)$ If $s$ is even, then $\O^{-s}V(r)\ot \O V(r')\cong\O^{-s+1}V(r+r')\oplus (s+1)P(r+r')$.\\
\end{lemma}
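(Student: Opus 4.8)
The plan is to adapt the argument of Lemma~\ref{2.13}, tensoring the same short exact sequence with $\O^{-s}V(r)$ rather than $\O^sV(r)$. Since $-\ot N$ is exact over the field $k$, applying $\O^{-s}V(r)\ot-$ to $0\ra\O V(r')\hookrightarrow P(r')\ra V(r')\ra 0$ produces an exact sequence of $D_4$-modules
$$0\ra\O^{-s}V(r)\ot\O V(r')\ra\O^{-s}V(r)\ot P(r')\ra\O^{-s}V(r)\ot V(r')\ra 0.$$
By Corollary~\ref{2.3} the right-hand term is $\O^{-s}V(r+r')$, and by Corollary~\ref{2.10} the middle (projective) term is $sP(r+r')\oplus(s+1)P(r+r'+1)$ when $s$ is odd and $sP(r+r'+1)\oplus(s+1)P(r+r')$ when $s$ is even. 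Thus $\O^{-s}V(r)\ot\O V(r')$ is realized as the kernel of an epimorphism from a projective module onto $\O^{-s}V(r+r')$.

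Next I would invoke Lemma~\ref{2.12} with $M=\O^{-s}V(r+r')$. To check its hypotheses one uses the description of $\O^{-s}V(r)$ from Section~1: $\O^{-s}V(r+r')$ is indecomposable of Loewy length $2$, and $\O^{-s}V(r+r')/(J\O^{-s}V(r+r'))\cong sV(r+r')$ if $s$ is odd, $\cong sV(r+r'+1)$ if $s$ is even. In each case the simple $V(\rho)$ occurring in the top of $M$ is precisely the one whose projective cover $P(\rho)$ appears with multiplicity $s$ in the middle term above, and the complementary summand is $(s+1)$ copies of $P(\rho+1)$. Hence Lemma~\ref{2.12} applies and yields
$$\O^{-s}V(r)\ot\O V(r')\cong\O\big(\O^{-s}V(r+r')\big)\oplus(s+1)P(\rho+1),$$
where $\rho+1=r+r'+1$ for $s$ odd and $\rho+1=r+r'$ for $s$ even.

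Finally I would identify $\O(\O^{-s}V(r+r'))$. Since $D_4$ is self-injective and $\O^{-s}V(r+r')$ is indecomposable non-projective (it has Loewy length $2$), the functors $\O$ and $\O^{-1}$ are mutually inverse on such modules, so $\O(\O^{-s}V(r+r'))\cong\O^{-s+1}V(r+r')$ (reading $\O^0V(r+r')=V(r+r')$ when $s=1$, as agreed in Section~1). Substituting this into the displayed isomorphism gives exactly (1) and (2).

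The routine parts are direct applications of Corollaries~\ref{2.3}, \ref{2.10} and Lemma~\ref{2.12}; the one point needing attention is the $\mathbb Z_2$-bookkeeping when matching $M=\O^{-s}V(r+r')$ against the hypotheses of Lemma~\ref{2.12}, i.e.\ confirming that the role of ``$r$'' (resp.\ ``$r+1$'') in that lemma is played by $r+r'$ (resp.\ $r+r'+1$) when $s$ is odd and by $r+r'+1$ (resp.\ $r+r'$) when $s$ is even.
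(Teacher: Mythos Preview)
Your proposal is correct and follows essentially the same approach as the paper: tensor the defining exact sequence of $\O V(r')$ by $\O^{-s}V(r)$, identify the middle and right terms via Corollaries~\ref{2.3} and~\ref{2.10}, and then apply Lemma~\ref{2.12}. The paper's proof is terser about the final identification $\O(\O^{-s}V(r+r'))\cong\O^{-s+1}V(r+r')$, leaving it implicit, whereas you spell out the self-injectivity reasoning; otherwise the two arguments coincide.
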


\begin{proof}
Applying $\O^{-s}V(r)\ot$ to the exact sequence
$0\ra\O V(r')\hookrightarrow P(r')\ra V(r')\ra 0$,
one gets the following exact sequence
of $D_4$-modules
$$0\ra\O^{-s}V(r)\ot\O V(r')\hookrightarrow\O^{-s}V(r)\ot P(r')\ra\O^{-s}V(r)\ot V(r')\ra 0.$$
From Corollary \ref{2.3}, we have $\O^{-s}V(r)\ot V(r')\cong \O^{-s}V(r+r')$.
From Corollary \ref{2.10}, we have that
$\O^{-s}V(r)\ot P(r')\cong sP(r+r')\oplus(s+1)P(r+r'+1)$ if $s$ is odd, and that
$\O^{-s}V(r)\ot P(r')\cong sP(r+r'+1)\oplus(s+1)P(r+r')$ if $s$ is even.
Then the lemma follows from the above exact sequence and Lemma \ref{2.12}.
\end{proof}

\begin{corollary}\label{2.19}
Let $r, r'\in{\mathbb Z}_2$ and $s\>1$.\\
$(1)$ If $s$ is odd, then $\O^{s}V(r)\ot \O^{-1} V(r')\cong(s+1)P(r+r'+1)\oplus\O^{s-1}V(r+r')$.\\
$(2)$ If $s$ is even, then $\O^{s}V(r)\ot \O^{-1}V(r')\cong(s+1)P(r+r')\oplus\O^{s-1}V(r+r')$.\\
\end{corollary}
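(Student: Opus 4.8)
The plan is to obtain Corollary~\ref{2.19} from Lemma~\ref{2.18} by applying the duality functor $(-)^*$ on ${\rm mod}D_4$, exactly as Corollary~\ref{2.17} was deduced from Proposition~\ref{2.14}. Recall that for any $M, N\in{\rm mod}D_4$ one has $(M\ot N)^*\cong N^*\ot M^*\cong M^*\ot N^*$, the last isomorphism because $D_4$ is quasitriangular, and that $(-)^*$ is an exact, additive, involutive self-equivalence of ${\rm mod}D_4$.

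First I would record the effect of $(-)^*$ on the modules that occur. By Lemma~\ref{2.16} we have $(\O^{-s}V(r))^*\cong\O^sV(r)$ and $(\O V(r'))^*\cong\O^{-1}V(r')$, so that
$$(\O^{-s}V(r)\ot\O V(r'))^*\cong(\O V(r'))^*\ot(\O^{-s}V(r))^*\cong\O^{-1}V(r')\ot\O^sV(r)\cong\O^sV(r)\ot\O^{-1}V(r').$$
On the other side, Lemma~\ref{2.16} gives $P(r+r'+1)^*\cong P(r+r'+1)$ and $P(r+r')^*\cong P(r+r')$, and $(\O^{-s+1}V(r+r'))^*\cong\O^{s-1}V(r+r')$ whenever $s\>2$; for $s=1$ the term $\O^{-s+1}V(r+r')$ is $\O^0V(r+r')=V(r+r')$, whose dual is again $V(r+r')=\O^0V(r+r')$ by Lemma~\ref{2.15}, so the identification $(\O^{-s+1}V(r+r'))^*\cong\O^{s-1}V(r+r')$ is valid for all $s\>1$ under the convention $\O^0V=V$.

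Next I would simply dualize the two isomorphisms of Lemma~\ref{2.18}. If $s$ is odd, Lemma~\ref{2.18}(1) reads $\O^{-s}V(r)\ot\O V(r')\cong\O^{-s+1}V(r+r')\oplus(s+1)P(r+r'+1)$; applying $(-)^*$ and using the identifications above turns this into $\O^sV(r)\ot\O^{-1}V(r')\cong\O^{s-1}V(r+r')\oplus(s+1)P(r+r'+1)$, which is Part~(1). If $s$ is even, the same procedure applied to Lemma~\ref{2.18}(2) yields $\O^sV(r)\ot\O^{-1}V(r')\cong\O^{s-1}V(r+r')\oplus(s+1)P(r+r')$, which is Part~(2).

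I do not expect a genuine obstacle here: this is a routine transport of Lemma~\ref{2.18} across the duality, and the only point needing a moment's care is the boundary case $s=1$, where $\O^{-s+1}V$ degenerates to a simple module and one must invoke $V(r)^*\cong V(r)$ from Lemma~\ref{2.15} rather than the cosyzygy formula of Lemma~\ref{2.16}; the reordering of the two tensor factors after dualizing is justified by quasitriangularity of $D_4$. An alternative, self-contained route would be to apply $\O^sV(r)\ot-$ to the exact sequence $0\ra V(r')\ra P(r')\ra\O^{-1}V(r')\ra 0$, compute the first two terms via Corollaries~\ref{2.3} and \ref{2.10}, and identify the cokernel via a dual version of Lemma~\ref{2.12} (using that $D_4$ is symmetric, so injective $=$ projective), but the duality argument is the shorter one.
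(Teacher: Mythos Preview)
Your proposal is correct and follows exactly the paper's own approach: the paper's proof of Corollary~\ref{2.19} simply states that it follows from Lemmas~\ref{2.16} and \ref{2.18} by using the duality $(-)^*$. Your careful treatment of the boundary case $s=1$ via Lemma~\ref{2.15} is a nice detail that the paper leaves implicit under the convention $\O^0V(r)=V(r)$.
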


\begin{proof}
It follows from Lemmas \ref{2.16} and \ref{2.18} by using the duality $(-)^*$.
\end{proof}

\begin{proposition}\label{2.20}
Let $r, r'\in{\mathbb Z}_2$ and $s, t\>1$.\\
$(1)$ If $s+t$ is even and $s\>t$, then
$$\O^{s}V(r)\ot \O^{-t} V(r')\cong \O^{s-t}V(r+r')\oplus(s+1)tP(r+r'+1).$$
$(2)$ If $s+t$ is odd and $s\>t$, then
$$\O^{s}V(r)\ot \O^{-t}V(r')\cong \O^{s-t}V(r+r')\oplus(s+1)tP(r+r').$$
$(3)$ If $s+t$ is even and $s<t$, then
$$\O^{s}V(r)\ot \O^{-t} V(r')\cong \O^{s-t}V(r+r')\oplus(t+1)sP(r+r'+1).$$
$(4)$ If $s+t$ is odd and $s<t$, then
$$\O^{s}V(r)\ot \O^{-t}V(r')\cong \O^{s-t}V(r+r')\oplus(t+1)sP(r+r').$$
\end{proposition}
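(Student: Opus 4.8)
The plan is to establish parts (1) and (2) (the case $s\>t$) by induction on $t$, and then to deduce parts (3) and (4) (the case $s<t$) from (1) and (2) by applying the duality $(-)^*$.

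For (1) and (2) the base case $t=1$ is exactly Corollary \ref{2.19}. For the inductive step assume $t>1$ and $s\>t$; then $t-1\>1$ and $s\>t-1$, so the inductive hypothesis applies to $\O^sV(r)\ot\O^{-(t-1)}V(r')$ while Corollary \ref{2.17} applies to $\O^{-(t-1)}V(r')\ot\O^{-1}V(0)$. I would compute the $D_4$-module
$$\O^sV(r)\ot\O^{-(t-1)}V(r')\ot\O^{-1}V(0)$$
in two ways. Grouping the last two tensor factors and using Corollary \ref{2.17} (with its parity split on $t$) expresses it as $\O^sV(r)\ot\O^{-t}V(r')$ plus $(t-1)$ copies of $P(r')$ or of $P(r'+1)$; tensoring those projective summands with $\O^sV(r)$ and using Corollary \ref{2.10} rewrites the latter as copies of $P(r+r')$ and $P(r+r'+1)$. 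Grouping instead the first two factors and applying the inductive hypothesis expresses it as $\O^{s-t+1}V(r+r')$ plus a known number of projectives; tensoring this with $\O^{-1}V(0)$, treating the projective part by Corollary \ref{2.10} and the term $\O^{s-t+1}V(r+r')\ot\O^{-1}V(0)$ by Corollary \ref{2.19} (valid since $s-t+1\>1$), yields $\O^{s-t}V(r+r')$ plus further copies of $P(r+r')$ and $P(r+r'+1)$. Since $\O^{s-t}V(r+r')$ (read as $V(r+r')$ when $s=t$) is indecomposable and non-projective, equating the two decompositions and cancelling the common projective summands by the Krull-Schmidt-Remak Theorem isolates
$$\O^sV(r)\ot\O^{-t}V(r')\cong\O^{s-t}V(r+r')\oplus mP(r+r')\oplus nP(r+r'+1);$$
computing $m$ and $n$, which depend only on the parities of $s$ and $t$, gives the formulas in (1) and (2).

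For (3) and (4), since $D_4$ is quasitriangular we have
$$(\O^sV(r)\ot\O^{-t}V(r'))^*\cong(\O^{-t}V(r'))^*\ot(\O^sV(r))^*\cong\O^tV(r')\ot\O^{-s}V(r)$$
by Lemma \ref{2.16}. When $s<t$ the right-hand module is covered by parts (1) and (2), already proved, applied with $s,t$ and $r,r'$ interchanged; it is therefore isomorphic to $\O^{t-s}V(r+r')\oplus(t+1)sP(r+r'+\e)$, where $\e\in\{0,1\}$ is determined by the parity of $s+t$. Applying $(-)^*$ once more and using Lemma \ref{2.16} again, together with $P(\,\cdot\,)^*\cong P(\,\cdot\,)$ and $\O^{s-t}V(r+r')=\O^{-(t-s)}V(r+r')$, gives parts (3) and (4).

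The point of difficulty is the multiplicity bookkeeping in the inductive step: one must track, through the parity cases coming from Corollaries \ref{2.17}, \ref{2.10} and \ref{2.19}, which of $P(r+r')$ and $P(r+r'+1)$ appears and with what multiplicity, so that after the Krull-Schmidt cancellation the total comes out to precisely $(s+1)t$ (respectively $(t+1)s$). This is the same kind of careful count already performed in the proof of Proposition \ref{2.14}, and no essentially new idea is needed.
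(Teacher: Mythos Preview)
Your proposal is correct and follows essentially the same approach as the paper: induction on $t$ for (1) and (2) with base case Corollary \ref{2.19}, computing $\O^sV(r)\ot\O^{-(t-1)}V(r')\ot\O^{-1}V(0)$ in two ways via Corollaries \ref{2.17}, \ref{2.10}, \ref{2.19} and the inductive hypothesis, then cancelling by Krull--Schmidt--Remak; parts (3) and (4) are deduced from (1) and (2) by duality and Lemma \ref{2.16}. The paper does carry out the multiplicity bookkeeping explicitly in the even case (and states that the odd case is analogous), but you have correctly identified this as the only remaining work.
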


\begin{proof}
We first prove Parts (1) and (2) by induction on $t$. If $t=1$, they follow from Corollary \ref{2.19}.
Now let $s\>t>1$.

Assume $s+t$ is even. Then both $s+t-1$ and $s-t+1$ are odd, and hence
$\O^{s}V(r)\ot \O^{-t+1}V(r')\cong(s+1)(t-1)P(r+r')\oplus\O^{s-t+1}V(r+r')$ by the induction hypothesis.
Thus, by Corollaries \ref{2.10}(1) and \ref{2.19}(1), we have
$$\begin{array}{rl}
&\O^{s}V(r)\ot \O^{-t+1}V(r')\ot\O^{-1}V(0)\\
\cong & (s+1)(t-1)P(r+r')\ot\O^{-1}V(0)\oplus\O^{s-t+1}V(r+r')\ot\O^{-1}V(0)\\
\cong & (s+1)(t-1)P(r+r')\oplus (2st-s+t)P(r+r'+1)\oplus\O^{s-t}V(r+r').\\
\end{array}$$
On the other hand, if $t-1$ is odd, then $s$ is even. In this case, by Corollaries \ref{2.17}(1)
and \ref{2.10}(2), we have
$$\begin{array}{rl}
&\O^{s}V(r)\ot \O^{-t+1}V(r')\ot\O^{-1}V(0)\\
\cong & \O^{s}V(r)\ot\O^{-t}V(r')\oplus\O^{s}V(r)\ot(t-1)P(r')\\
\cong & \O^{s}V(r)\ot\O^{-t}V(r')\oplus s(t-1)P(r+r'+1)\oplus (s+1)(t-1)P(r+r').\\
\end{array}$$
If $t-1$ is even, then $s$ is odd. In this case, by Corollaries \ref{2.17}(2)
and \ref{2.10}(1), we have
$$\begin{array}{rl}
&\O^{s}V(r)\ot \O^{-t+1}V(r')\ot\O^{-1}V(0)\\
\cong & \O^{s}V(r)\ot\O^{-t}V(r')\oplus\O^{s}V(r)\ot(t-1)P(r'+1)\\
\cong & \O^{s}V(r)\ot\O^{-t}V(r')\oplus s(t-1)P(r+r'+1)\oplus (s+1)(t-1)P(r+r').\\
\end{array}$$
Thus, we have proved that
$$\begin{array}{rl}
& \O^{s}V(r)\ot\O^{-t}V(r')\oplus s(t-1)P(r+r'+1)\oplus (s+1)(t-1)P(r+r')\\
\cong & \O^{s-t}V(r+r')\oplus (2st-s+t)P(r+r'+1)\oplus(s+1)(t-1)P(r+r').\\
\end{array}$$
Then by Krull-Schmidt-Remak Theorem, we have
$$\O^{s}V(r)\ot\O^{-t}V(r')
\cong\O^{s-t}V(r+r')\oplus (s+1)tP(r+r'+1).$$

Assume $s+t$ is odd.
Following the argument above, one can show that
$$\begin{array}{rl}
& \O^{s}V(r)\ot\O^{-t}V(r')\oplus s(t-1)P(r+r')\oplus (s+1)(t-1)P(r+r'+1)\\
\cong & \O^{s-t}V(r+r')\oplus (2st-s+t)P(r+r')\oplus(s+1)(t-1)P(r+r'+1).\\
\end{array}$$
Then by Krull-Schmidt-Remak Theorem, we have
$$\O^{s}V(r)\ot\O^{-t}V(r')
\cong\O^{s-t}V(r+r')\oplus (s+1)tP(r+r').$$

Thus, we have proved Parts (1) and (2).

Now assume that $s+t$ is even and $s<t$. Then by Part (1), we have
$\O^{-s} V(r)\ot \O^{t}V(r')\cong(t+1)sP(r+r'+1)\oplus\O^{t-s}V(r+r')$.
Applying the duality $(-)^*$ to the isomorphism, it follows from Lemma \ref{2.16} that
$$\O^{s} V(r)\ot\O^{-t}V(r')\cong\O^{s-t}V(r+r')\oplus(t+1)sP(r+r'+1).$$
This shows Part (3). Similarly, Part (4) follows from Part (2) and Lemma \ref{2.16}
by using the duality $(-)^*$.
\end{proof}

\begin{proposition}\label{2.21}
Let $s, t\>1$ and $\eta\in{\mathbb P}^1(k)$.\\
$(1)$ If $s$ is odd then $M_t(0, \eta)\ot \O^sV(0)\cong stP(0)\oplus M_t(1, \eta)$.\\
$(2)$ If $s$ is even then $M_t(0, \eta)\ot \O^sV(0)\cong stP(1)\oplus M_t(0, \eta)$.
\end{proposition}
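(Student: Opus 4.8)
\emph{Proof proposal.} The plan is to prove Parts (1) and (2) together by induction on $s$, starting from the case $s=1$.

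\emph{Base case $s=1$.} Apply $M_t(0,\eta)\ot-$ to the exact sequence $0\ra\O V(0)\hookrightarrow P(0)\ra V(0)\ra 0$. The middle term becomes $M_t(0,\eta)\ot P(0)\cong tP(0)\oplus tP(1)$ by Corollary \ref{2.10}(3), and the right-hand term becomes $M_t(0,\eta)\ot V(0)\cong M_t(0,\eta)$ by Proposition \ref{2.4}, so $M_t(0,\eta)\ot\O V(0)$ is the kernel of a $D_4$-module epimorphism $tP(1)\oplus tP(0)\ra M_t(0,\eta)$. Now $M_t(0,\eta)$ is indecomposable with ${\rm rl}(M_t(0,\eta))=2$, and ${\rm soc}(M_t(0,\eta))\cong tV(0)$ forces $M_t(0,\eta)/(JM_t(0,\eta))\cong tV(1)$, so $tP(1)$ is the projective cover of $M_t(0,\eta)$. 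Lemma \ref{2.12} (with the ``$s$'', ``$r$'', ``$t$'' of that statement taken to be $t$, $1$, $t$) then gives $M_t(0,\eta)\ot\O V(0)\cong\O M_t(0,\eta)\oplus tP(0)\cong M_t(1,\eta)\oplus tP(0)$, where we used $\O M_t(0,\eta)\cong M_t(1,\eta)$. This is Part (1) for $s=1$. Tensoring this isomorphism with $V(1)$ and applying Propositions \ref{2.1} and \ref{2.4} yields the companion identity $M_t(1,\eta)\ot\O V(0)\cong tP(1)\oplus M_t(0,\eta)$, which is used in the inductive step.

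\emph{Inductive step.} Assume the formulas hold for some $s\>1$; I evaluate $M_t(0,\eta)\ot\O^sV(0)\ot\O V(0)$ in two ways. First, by Lemma \ref{2.13} one has $\O^sV(0)\ot\O V(0)\cong\O^{s+1}V(0)\oplus sP$ with $P=P(0)$ or $P(1)$ depending on the parity of $s$; since $M_t(0,\eta)\ot P(0)\cong M_t(0,\eta)\ot P(1)\cong tP(0)\oplus tP(1)$ by Corollary \ref{2.10}(3), tensoring with $M_t(0,\eta)$ gives
$$M_t(0,\eta)\ot\O^sV(0)\ot\O V(0)\cong\bigl(M_t(0,\eta)\ot\O^{s+1}V(0)\bigr)\oplus stP(0)\oplus stP(1).$$
Second, substituting the inductive formula for $M_t(0,\eta)\ot\O^sV(0)$ and then applying Corollary \ref{2.10}(1) with $s=1$ to the projective summands, together with the base case (if $s$ is even) or the companion identity (if $s$ is odd) to the remaining $M_t(\cdot,\eta)$-summand, expresses the same module as an explicit direct sum of copies of $P(0)$ and $P(1)$ plus a single copy of $M_t(1,\eta)$ (when $s$ is even) or $M_t(0,\eta)$ (when $s$ is odd). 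Equating the two expressions and cancelling the common projective summands by the Krull--Schmidt--Remak theorem isolates $M_t(0,\eta)\ot\O^{s+1}V(0)$ and gives precisely the stated formula for $s+1$; the parity of $s+1$ selects which of Parts (1), (2) one obtains. The cases $s$ odd and $s$ even are the same computation with $P(0)\leftrightarrow P(1)$ and $M_t(0,\eta)\leftrightarrow M_t(1,\eta)$ interchanged.

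The steps after the base case are just bookkeeping of multiplicities and parities. The delicate point is the base case, where one must notice that the projective cover of $M_t(0,\eta)$ is $tP(1)$ rather than $tP(0)$, so that the epimorphism $tP(1)\oplus tP(0)\ra M_t(0,\eta)$ really does satisfy the hypotheses of Lemma \ref{2.12}, and one must correctly identify $\O M_t(0,\eta)\cong M_t(1,\eta)$. (Alternatively, the induction can be run directly on the exact sequence $0\ra\O^{s+1}V(0)\ra P(\O^sV(0))\ra\O^sV(0)\ra 0$, but this requires a mild extension of Lemma \ref{2.12} to targets that are not of Loewy length $2$, which the two-ways-of-computing argument above avoids.)
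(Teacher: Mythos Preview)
Your proof is correct. The base case $s=1$ is identical to the paper's. For the inductive step, however, you and the paper take different routes. The paper tensors $M_t(0,\eta)$ with the exact sequence $0\ra\O^sV(0)\hookrightarrow sP(\cdot)\ra\O^{s-1}V(0)\ra 0$, then uses the induction hypothesis to write the resulting cokernel as $(s-1)tP(\cdot)\oplus M_t(\cdot,\eta)$, splits off the projective summand $(s-1)tP(\cdot)$ (possible since $P(\cdot)$ is injective), and applies Lemma~\ref{2.12} to the reduced short exact sequence whose cokernel $M_t(\cdot,\eta)$ has Loewy length~$2$. You instead mirror the technique of Propositions~\ref{2.14} and~\ref{2.20}: compute $M_t(0,\eta)\ot\O^sV(0)\ot\O V(0)$ in two ways via Lemma~\ref{2.13} and the base case, and then cancel common projective summands by Krull--Schmidt. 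Both approaches work; the paper's is a bit shorter, while yours is more uniform with the surrounding arguments and avoids invoking Lemma~\ref{2.12} beyond the base case.

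One small correction: your closing parenthetical claims that running the induction directly on the exact sequence $0\ra\O^{s+1}V(0)\ra P(\O^sV(0))\ra\O^sV(0)\ra 0$ would require a ``mild extension of Lemma~\ref{2.12}'' to targets not of Loewy length~$2$. In fact no extension is needed: after tensoring, the target $M_t(0,\eta)\ot\O^sV(0)$ decomposes (by induction) as a projective module direct sum $M_t(\cdot,\eta)$, and since the projective summand is also injective one can split it off from the short exact sequence first, leaving a sequence to which Lemma~\ref{2.12} applies verbatim. This is exactly what the paper does.
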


\begin{proof}
We prove the proposition by induction on $s$.

Applying $M_t(0, \eta)\ot$ to the exact sequence
$0\ra\O V(0)\hookrightarrow P(0)\ra V(0)\ra 0$,
one gets the following exact sequence of $D_4$-modules
$$0\ra M_t(0, \eta)\ot\O V(0)\hookrightarrow M_t(0, \eta)\ot P(0)\ra M_t(0, \eta)\ot V(0)\ra 0.$$
By Proposition \ref{2.4} and Corollary \ref{2.10}(3), we have $M_t(0, \eta)\ot V(0)\cong M_t(0, \eta)$
and $M_t(0, \eta)\ot P(0)\cong tP(0)\oplus tP(1)$. Hence we have an exact sequence
$$0\ra M_t(0, \eta)\ot\O V(0)\ra tP(0)\oplus tP(1)\ra M_t(0, \eta)\ra 0.$$
It follows from Lemma \ref{2.12} that $M_t(0, \eta)\ot\O V(0)\cong tP(0)\oplus\O M_t(0, \eta)
\cong tP(0)\oplus M_t(1, \eta)$.

Let $s>1$ be even. Then we have an exact sequence
$$0\ra\O^s V(0)\hookrightarrow sP(1)\ra \O^{s-1}V(0)\ra 0.$$
Applying $M_t(0, \eta)\ot$ to the above exact sequence, one get the following exact sequence
$$0\ra M_t(0, \eta)\ot\O^s V(0)\ra M_t(0, \eta)\ot(sP(1))\ra M_t(0, \eta)\ot\O^{s-1}V(0)\ra 0.$$
By Lemma \ref{2.9}, we have $M_t(0, \eta)\ot(sP(1))\cong stP(0)\oplus stP(1)$. By the induction
hypothesis, we have $M_t(0, \eta)\ot\O^{s-1}V(0)\cong (s-1)tP(0)\oplus M_t(1,\eta)$.
Hence we have an exact sequence
$$0\ra M_t(0, \eta)\ot\O^s V(0)\ra stP(0)\oplus stP(1)\ra (s-1)tP(0)\oplus M_t(1,\eta)\ra 0.$$
Since $(s-1)tP(0)$ is projective, from the above exact sequence, one can deduce the following exact sequence
$$0\ra M_t(0, \eta)\ot\O^s V(0)\ra tP(0)\oplus stP(1)\ra M_t(1,\eta)\ra 0.$$
It follows from Lemma \ref{2.12} that $M_t(0, \eta)\ot\O^s V(0)\cong stP(1)\oplus \O M_t(1,\eta)
\cong stP(1)\oplus M_t(0,\eta)$.

Let $s>1$ be odd. Then we have an exact sequence
$$0\ra\O^s V(0)\hookrightarrow sP(0)\ra \O^{s-1}V(0)\ra 0.$$
Then an argument similar to the above one shows that
$M_t(0, \eta)\ot\O^s V(0)\cong stP(0)\oplus \O M_t(0,\eta)
\cong stP(0)\oplus M_t(1,\eta)$.
\end{proof}

\begin{corollary}\label{2.22}
Let $s, t\>1$, $r, r'\in{\mathbb Z}_2$ and $\eta\in{\mathbb P}^1(k)$.\\
$(1)$ If $s$ is odd then $M_t(r, \eta)\ot \O^sV(r')\cong stP(r+r')\oplus M_t(r+r'+1, \eta)$.\\
$(2)$ If $s$ is even then $M_t(r, \eta)\ot \O^sV(r')\cong stP(r+r'+1)\oplus M_t(r+r', \eta)$.
\end{corollary}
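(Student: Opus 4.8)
The plan is to deduce both parts from the special case $r=r'=0$, which is exactly Proposition~\ref{2.21}, by tensoring with the one-dimensional simple module $V(r+r')$. Recall from Proposition~\ref{2.1} and Proposition~\ref{2.4} that tensoring by a one-dimensional module only shifts the $\mathbb{Z}_2$-parameters: $V(r)\ot M_t(r'',\eta)\cong M_t(r+r'',\eta)$, $V(r)\ot P(r'')\cong P(r+r'')$, and $V(r)\ot V(r'')\cong V(r+r'')$; moreover $V(0)\ot X\cong X$ for every $D_4$-module $X$, and by Corollary~\ref{2.3} also $V(r)\ot\O^sV(r'')\cong\O^sV(r+r'')$.

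First I would rewrite the left-hand side. Using $M_t(r,\eta)\cong V(r)\ot M_t(0,\eta)$, the isomorphism $\O^sV(r')\cong V(r')\ot\O^sV(0)$ from Corollary~\ref{2.3}, and the fact that $D_4$ is quasitriangular (so the tensor product is commutative up to isomorphism), one gets
$$M_t(r,\eta)\ot\O^sV(r')\cong V(r)\ot V(r')\ot M_t(0,\eta)\ot\O^sV(0)\cong V(r+r')\ot\big(M_t(0,\eta)\ot\O^sV(0)\big).$$

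Then I would insert the decomposition of $M_t(0,\eta)\ot\O^sV(0)$ given by Proposition~\ref{2.21} and distribute $V(r+r')\ot-$ over the direct sum. If $s$ is odd, Proposition~\ref{2.21}(1) gives $M_t(0,\eta)\ot\O^sV(0)\cong stP(0)\oplus M_t(1,\eta)$, so tensoring with $V(r+r')$ and using $V(r+r')\ot P(0)\cong P(r+r')$ together with $V(r+r')\ot M_t(1,\eta)\cong M_t(r+r'+1,\eta)$ yields $stP(r+r')\oplus M_t(r+r'+1,\eta)$, which is Part~(1). If $s$ is even, Proposition~\ref{2.21}(2) gives $M_t(0,\eta)\ot\O^sV(0)\cong stP(1)\oplus M_t(0,\eta)$, and the same reasoning with $V(r+r')\ot P(1)\cong P(r+r'+1)$ and $V(r+r')\ot M_t(0,\eta)\cong M_t(r+r',\eta)$ produces $stP(r+r'+1)\oplus M_t(r+r',\eta)$, which is Part~(2). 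I do not expect any genuine obstacle: everything is a transport of the already-established Proposition~\ref{2.21} along the invertible functor $V(r+r')\ot-$, and the only point requiring attention is the bookkeeping of the indices in $\mathbb{Z}_2$; in particular no new appeal to the Krull--Schmidt--Remak theorem is needed.
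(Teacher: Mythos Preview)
Your proposal is correct and follows essentially the same route as the paper: reduce to the case $r=r'=0$ via $M_t(r,\eta)\ot\O^sV(r')\cong V(r+r')\ot\bigl(M_t(0,\eta)\ot\O^sV(0)\bigr)$ using Propositions~\ref{2.1}, \ref{2.4} and Corollary~\ref{2.3}, then apply Proposition~\ref{2.21} and distribute. The paper's proof is the same chain of isomorphisms, written out for Part~(1) with Part~(2) declared similar.
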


\begin{proof}
If $s$ is odd, then by Propositions \ref{2.1}, \ref{2.4}, \ref{2.21}(1) and Corollary \ref{2.3}, we have
$$\begin{array}{rcl}
M_t(r, \eta)\ot \O^sV(r')&\cong & V(r)\ot M_t(0, \eta)\ot V(r')\ot \O^sV(0)\\
&\cong & V(r+r')\ot M_t(0, \eta)\ot \O^sV(0)\\
&\cong & V(r+r')\ot(stP(0)\oplus M_t(1, \eta))\\
&\cong & stP(r+r')\oplus M_t(r+r'+1, \eta).\\
\end{array}$$
This shows Part (1). Part (2) can be shown similarly.
\end{proof}

\begin{lemma}\label{2.23}
Let $s\>1$, $r\in{\mathbb Z}_2$ and $\eta\in{\mathbb P}^1(k)$. Then $M_s(r,\eta)^*\cong M_s(r+1, \eta)$.
\end{lemma}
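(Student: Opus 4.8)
The plan is to settle the case $s=1$ by a direct computation on the standard basis and then to reduce the general case $s\ge 2$ to it, by the same structural argument used for Proposition \ref{2.4}.

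First I would treat $s=1$. Using $S(a)=ba$, $S(b)=b$, $S(c)=c$, $S(d)=cd$ and $(x\cdot f)(m)=f(S(x)\cdot m)$, one computes the $D_4$-action on the dual basis $\{v_1^*,v_2^*\}$ of $M_1(r,\eta)$ from the actions listed in Section 1 (treating $\eta\in k$ and $\eta=\oo$ in turn). In both cases the elements $w_1:=v_2^*$ and $w_2:=(-1)^r v_1^*$ form a new basis of $M_1(r,\eta)^*$ satisfying precisely the defining relations of $M_1(r+1,\eta)$; hence $M_1(r,\eta)^*\cong M_1(r+1,\eta)$. This is the only genuinely computational step, and it is short.

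Now let $s\ge 2$. Since $D_4$ is quasitriangular we have $M^{**}\cong M$, so $(-)^*$ is a duality on ${\rm mod}\,D_4$; in particular it preserves indecomposability and Loewy length, sends simple modules to simple modules, and interchanges socle and top, i.e. ${\rm soc}(M^*)\cong (M/JM)^*$ and $M^*/J(M^*)\cong ({\rm soc}\,M)^*$. Applying this to $M=M_s(r,\eta)$, which has ${\rm rl}(M)=2$, ${\rm soc}(M)=JM\cong sV(r)$ and $M/{\rm soc}(M)\cong sV(r+1)$, together with Lemma \ref{2.15} ($V(r)^*\cong V(r)$), shows that $M_s(r,\eta)^*$ is indecomposable with ${\rm rl}=2$, ${\rm soc}(M_s(r,\eta)^*)\cong sV(r+1)$ and $M_s(r,\eta)^*/{\rm soc}(M_s(r,\eta)^*)\cong sV(r)$, so that it is of $(s,s)$-type. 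By \cite[Proposition 3.11 and Theorem 3.10(1)]{Ch4} (exactly as in the proof of Proposition \ref{2.4}) it follows that $M_s(r,\eta)^*\cong M_s(r+1,\a)$ for some $\a\in{\mathbb P}^1(k)$.

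It remains to identify $\a$. Taking $i=s-1$ in the exact sequence $0\ra M_i(r,\eta)\hookrightarrow M_s(r,\eta)\ra M_{s-i}(r,\eta)\ra 0$ gives a $D_4$-epimorphism $M_s(r,\eta)\twoheadrightarrow M_1(r,\eta)$; dualizing and invoking the case $s=1$ yields a $D_4$-monomorphism $M_1(r+1,\eta)\cong M_1(r,\eta)^*\hookrightarrow M_s(r,\eta)^*$, so $M_s(r,\eta)^*$ contains a submodule of $(1,1)$-type isomorphic to $M_1(r+1,\eta)$. On the other hand $M_s(r+1,\a)\cong M_s(r,\eta)^*$ contains a unique submodule of $(1,1)$-type, which is isomorphic to $M_1(r+1,\a)$, by \cite[Theorem 3.10(2)]{Ch4}. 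Hence $M_1(r+1,\eta)\cong M_1(r+1,\a)$, which forces $\a=\eta$ by \cite[Theorem 3.10(4)]{Ch4}. The main obstacle is the $s=1$ base computation together with correctly tracking the index shift $r\mapsto r+1$ coming from the antipode twist; once that is in hand, the rest is a routine transcription of the argument for Proposition \ref{2.4}.
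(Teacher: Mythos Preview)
Your proof is correct and follows essentially the same approach as the paper: a direct computation for $s=1$, then for $s\ge 2$ dualizing the epimorphism $M_s(r,\eta)\twoheadrightarrow M_1(r,\eta)$ to embed $M_1(r+1,\eta)$ into $M_s(r,\eta)^*$ and invoking the classification and uniqueness results from \cite{Ch4}. The only cosmetic difference is that the paper first reduces to $r=0$ via Proposition~\ref{2.4} and Lemma~\ref{2.15} before computing, whereas you carry the general $r$ (and the attendant $(-1)^r$ factor) through the base case directly.
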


\begin{proof}
By Proposition \ref{2.4} and Lemma \ref{2.15}, we only need to show that $M_s(0,\eta)^*\cong M_s(1, \eta)$.

For $\eta\in k$, let $\{v_1, v_2\}$ be the standard basis of $M_1(0,\eta)$ as stated in Section 1,
and let $\{f_1, f_2\}$ be the dual basis in $M_1(0,\eta)^*$. Then one can easily check that
\begin{equation*}
\begin{array}{llll}
a\cdot f_2=f_1, & d\cdot f_2=-\eta f_1, & b\cdot f_2=f_2, & c\cdot f_2=f_2,\\
a\cdot f_1=0, & d\cdot f_1=0, & b\cdot f_1=-f_1, & c\cdot f_1=-f_1.\\
\end{array}
\end{equation*}
Hence $M_1(0,\eta)^*\cong M_1(1, \eta)$. For $\eta=\infty$,
one can similarly show that $M_1(0,\eta)^*\cong M_1(1, \eta)$.

Now assume $s>1$ and $\eta\in{\mathbb P}^1(k)$. Then there is a $D_4$-module epimorphism
$M_s(0, \eta)\ra M_1(0, \eta)$. Applying the duality $(-)^*$, one gets a $D_4$-module
monomorphism $M_1(0, \eta)^*\ra M_s(0, \eta)^*$. Hence $M_s(0, \eta)^*$ contains a submodule
isomorphic to $M_1(0, \eta)^*\cong M_1(1, \eta)$. It follows from
\cite[Theorem 3.10(2) and Proposition 3.11]{Ch4} that $M_s(0, \eta)^*\cong M_s(1, \eta)$
since $M_s(0, \eta)^*$ is of $(s,s)$-type.
\end{proof}

\begin{corollary}\label{2.24}
Let $s, t\>1$, $r, r'\in{\mathbb Z}_2$ and $\eta\in{\mathbb P}^1(k)$.\\
$(1)$ If $s$ is odd then $M_t(r, \eta)\ot \O^{-s}V(r')\cong stP(r+r'+1)\oplus M_t(r+r'+1, \eta)$.\\
$(2)$ If $s$ is even then $M_t(r, \eta)\ot \O^{-s}V(r')\cong stP(r+r')\oplus M_t(r+r', \eta)$.
\end{corollary}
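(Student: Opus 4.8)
The plan is to deduce Corollary~\ref{2.24} from Corollary~\ref{2.22} by applying the duality functor $(-)^*$, exactly as Corollary~\ref{2.19} was deduced from Lemma~\ref{2.18}. First I would recall the general identities already established: $(M\ot N)^*\cong N^*\ot M^*\cong M^*\ot N^*$ for all $M,N\in{\rm mod}D_4$ (since $D_4$ is quasitriangular, so the tensor product is commutative up to isomorphism), together with $(\O^sV(r))^*\cong\O^{-s}V(r)$ and $P(r)^*\cong P(r)$ from Lemma~\ref{2.16}, and $M_t(r,\eta)^*\cong M_t(r+1,\eta)$ from Lemma~\ref{2.23}.

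For part (1), suppose $s$ is odd. Apply Corollary~\ref{2.22}(1) with the second index shifted appropriately: $M_t(r+1,\eta)\ot\O^sV(r')\cong stP(r+1+r')\oplus M_t(r+1+r'+1,\eta)=stP(r+r'+1)\oplus M_t(r+r',\eta)$. Now take $(-)^*$ of both sides. The left side becomes $(\O^sV(r'))^*\ot M_t(r+1,\eta)^*\cong\O^{-s}V(r')\ot M_t(r,\eta)\cong M_t(r,\eta)\ot\O^{-s}V(r')$; the right side becomes $stP(r+r'+1)^*\oplus M_t(r+r',\eta)^*\cong stP(r+r'+1)\oplus M_t(r+r'+1,\eta)$. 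This gives precisely $M_t(r,\eta)\ot\O^{-s}V(r')\cong stP(r+r'+1)\oplus M_t(r+r'+1,\eta)$, which is part (1). Part (2), with $s$ even, is obtained the same way starting from Corollary~\ref{2.22}(2) with $r$ replaced by $r+1$: dualizing $M_t(r+1,\eta)\ot\O^sV(r')\cong stP(r+r')\oplus M_t(r+r'+1,\eta)$ yields $M_t(r,\eta)\ot\O^{-s}V(r')\cong stP(r+r')\oplus M_t(r+r',\eta)$.

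I do not anticipate a genuine obstacle here; the only point requiring a little care is bookkeeping the parity and the index shifts so that after the shift $r\mapsto r+1$ and after applying $(-)^*$ (which sends $M_t(\cdot,\eta)$ to $M_t(\cdot+1,\eta)$) the indices on the right-hand side land exactly as claimed — in particular checking that the two applications of "add $1$" (once by the input shift, once by the duality on the $M_t$ summand) combine correctly with the $+1$ already present inside Corollary~\ref{2.22}. One should also note that $(-)^*$ preserves parity of $s$, so odd stays with part (1) and even with part (2), and that it carries the split direct sum decomposition to a split direct sum decomposition since $(-)^*$ is an exact additive functor. Alternatively, one could run the same inductive argument as in Proposition~\ref{2.21} directly using the minimal injective resolution of $V(0)$ instead of the projective one, but the duality argument is shorter and cleaner.
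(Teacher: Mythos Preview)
Your proposal is correct and follows essentially the same route as the paper: the paper's proof is the one-line remark that the corollary follows from Lemmas~\ref{2.16} and~\ref{2.23} by applying the duality $(-)^*$ to the isomorphisms in Corollary~\ref{2.22}. Your extra step of first shifting $r\mapsto r+1$ before dualizing is just a bookkeeping choice that makes the index tracking explicit; the underlying argument is identical.
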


\begin{proof}
It follows from Lemmas \ref{2.16} and \ref{2.23} by applying the duality $(-)^*$ to the isomorphisms
in Corollary \ref{2.22}.
\end{proof}

\begin{proposition}\label{2.25}
Let $r, r'\in{\mathbb Z}_2$ and $\a, \eta\in{\mathbb P}^1(k)$.
If $\a\neq\eta$ then $M_s(r,\a)\ot M_t(r', \eta)\cong stP(r+r')$ for all $s, t\>1$.
\end{proposition}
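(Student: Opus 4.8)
The plan is to reduce the statement, in two elementary steps, to the single case $s=t=1$ and $r=r'=0$, and to dispatch that case by a short explicit computation. First I would remove the parameters $r$ and $r'$: by Proposition~\ref{2.4} we have $M_s(r,\a)\cong V(r)\ot M_s(0,\a)$ and $M_t(r',\eta)\cong V(r')\ot M_t(0,\eta)$, so using commutativity of $\ot$ and Proposition~\ref{2.1} one gets $M_s(r,\a)\ot M_t(r',\eta)\cong V(r+r')\ot\bigl(M_s(0,\a)\ot M_t(0,\eta)\bigr)$; since $V(r+r')\ot P(0)\cong P(r+r')$ again by Proposition~\ref{2.1}, it suffices to prove $M_s(0,\a)\ot M_t(0,\eta)\cong stP(0)$ for all $s,t\>1$ whenever $\a\neq\eta$.

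Next I would strip off $s$ and $t$ by a double induction, granting the case $s=t=1$ for the moment. Tensoring the exact sequence $0\ra M_1(0,\eta)\hookrightarrow M_t(0,\eta)\ra M_{t-1}(0,\eta)\ra 0$ with $M_1(0,\a)$ --- an exact operation, since $\ot$ is over $k$ --- yields $0\ra M_1(0,\a)\ot M_1(0,\eta)\ra M_1(0,\a)\ot M_t(0,\eta)\ra M_1(0,\a)\ot M_{t-1}(0,\eta)\ra 0$; once the two outer terms are known to be $P(0)$ and $(t-1)P(0)$, the sequence splits because $(t-1)P(0)$ is projective, and induction on $t$ gives $M_1(0,\a)\ot M_t(0,\eta)\cong tP(0)$. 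Tensoring $0\ra M_1(0,\a)\hookrightarrow M_s(0,\a)\ra M_{s-1}(0,\a)\ra 0$ with $M_t(0,\eta)$ and arguing the same way (induction on $s$) then gives $M_s(0,\a)\ot M_t(0,\eta)\cong stP(0)$ in general. So the whole proposition comes down to proving $M_1(0,\a)\ot M_1(0,\eta)\cong P(0)$ when $\a\neq\eta$.

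For this base case I would argue as in the proof of Proposition~\ref{2.5}: take the bases $\{v_1,v_2\}$ of $M_1(0,\a)$ and $\{w_1,w_2\}$ of $M_1(0,\eta)$ from Section~1 and compute the action of $a,b,c,d$ on the four vectors $v_i\ot w_j$ of $X:=M_1(0,\a)\ot M_1(0,\eta)$ from $\t(a)=a\ot b+1\ot a$, $\t(d)=d\ot c+1\ot d$, $\t(b)=b\ot b$, $\t(c)=c\ot c$. The outcome I expect is that $JX$ is three-dimensional, spanned by $v_2\ot w_1$, $v_1\ot w_2$, $v_2\ot w_2$, so that $X/JX$ is one-dimensional, spanned by the image of $v_1\ot w_1$, on which $b$ and $c$ act by $+1$; hence $X$ is local with top $V(0)$, so $X$ is a quotient of its projective cover $P(0)$, and $\dim X=4=\dim P(0)$ forces $X\cong P(0)$. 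Since $\a\neq\eta$ forces at most one of $\a,\eta$ to be $\oo$, the remaining possibility --- which, up to commutativity of $\ot$, I may take to be $\a\in k$, $\eta=\oo$ --- is handled by the same short computation.

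The one step with genuine content is this last one; everything before it is formal bookkeeping with short exact sequences, relying only on the projectivity of $P(r)$ so that copies of $P(0)$ split off. Within the base case the single delicate point is to see that $a(v_1\ot w_1)$ and $d(v_1\ot w_1)$ are linearly independent precisely because $\a\neq\eta$: this is exactly what keeps $\dim JX$ equal to $3$ rather than $2$, and hence what makes $X$ local (if $\a=\eta$ the same computation gives $\dim JX=2$ and $X$ decomposes instead). Keeping track of the case $\eta=\oo$ is the only other thing to watch.
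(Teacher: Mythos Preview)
Your proposal is correct and follows essentially the same route as the paper: reduce to $r=r'=0$ via Propositions~\ref{2.1} and~\ref{2.4}, then induct using the short exact sequences for $M_s(r,\eta)$ together with projectivity of $P(0)$, and handle the base case $s=t=1$ by an explicit computation on the tensor basis. The only cosmetic differences are that the paper runs a single induction on $s+t$ (using $0\ra M_{t-1}\ra M_t\ra M_1\ra 0$ rather than your $0\ra M_1\ra M_t\ra M_{t-1}\ra 0$) and, in the base case, writes down an explicit change of basis exhibiting the standard $P(0)$-basis, whereas you argue that $X$ is local with top $V(0)$ and conclude by dimension; both are straightforward and your observation that the linear independence of $a(v_1\ot w_1)$ and $d(v_1\ot w_1)$ hinges exactly on $\a\neq\eta$ is precisely the point.
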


\begin{proof}
By Propositions \ref{2.1} and \ref{2.4}, it is enough to show that
$M_s(0,\a)\ot M_t(0, \eta)\cong stP(0)$ for all $s, t\>1$, and $\a\neq\eta$ in
${\mathbb P}^1(k)$. We prove the statement by induction on $s+t$.

Let $\a, \eta\in k$ with $\a\neq \eta$. Let $\{v_1, v_2\}$ and $\{u_1, u_2\}$ be the standard bases
of $M_1(0, \a)$ and $M_1(0, \eta)$ as stated in Section 1, respectively.
Putting $w_1=v_1\ot u_1$, $w_2=v_1\ot u_2-v_2\ot u_1$, $w_3=\a v_2\ot u_1-\eta v_1\ot u_2$
and $w_4=(\a-\eta)v_2\ot u_2$ in $M_1(0, \a)\ot M_1(0, \eta)$.
Since $\a\neq \eta$, $\{w_1, w_2, w_3, w_4\}$ forms a basis of $M_1(0, \a)\ot M_1(0, \eta)$.
Now one can easily check that
\begin{equation*}
\begin{array}{llll}
a\cdot w_1=w_2,& d\cdot w_1=w_3,& b\cdot w_1=w_1, & c\cdot w_1=w_1,\\
a\cdot w_2=0,& d\cdot w_2=-w_4, & b\cdot w_2=-w_2, & c\cdot w_2=-w_2,\\
a\cdot w_3=w_4, & d\cdot w_3=0, & b\cdot w_3=-w_3, & c\cdot w_3=-w_3,\\
a\cdot w_4=0, & d\cdot w_4=0,   & b\cdot w_4=w_4, & c\cdot w_4=w_4.
\end{array}
\end{equation*}
It follows that $M_1(0,\a)\ot M_1(0, \eta)\cong P(0)$. Similarly, one can show
that $M_1(0,\infty)\ot M_1(0, \eta)\cong P(0)$ for all $\eta\in k$.

Now let $\a\neq\eta$ in ${\mathbb P}^1(k)$ and assume $s+t>2$.
We may assume $t>1$ since $M\ot N\cong N\ot M$ for any modules $M$ and $N$.
Then we have an exact sequence
$0\ra M_{t-1}(0, \eta)\ra M_t(0, \eta)\ra M_1(0, \eta)\ra 0$.
Applying $M_s(0, \a)\ot$, one gets the following exact sequence
$$0\ra M_s(0, \a)\ot M_{t-1}(0, \eta)\ra M_s(0, \a)\ot M_t(0, \eta)\ra M_s(0, \a)\ot M_1(0, \eta)\ra 0.$$
By the induction hypothesis, we have $M_s(0, \a)\ot M_{t-1}(0, \eta)\cong s(t-1)P(0)$ and
$M_s(0, \a)\ot M_{1}(0, \eta)\cong sP(0)$. Hence we have an exact sequence
$$0\ra s(t-1)P(0)\ra M_s(0, \a)\ot M_t(0, \eta)\ra sP(0)\ra 0,$$
which is split since $P(0)$ is projective (injective). It follows that
$M_s(0,\a)\ot M_t(0, \eta)\cong stP(0)$.
\end{proof}

\begin{lemma}\label{2.26}
Let $s>i\>1$. If $M$ is an indecomposable module of $(s+1, s)$-type, then $M$ contains no submodules
of $(i+1, i)$-type, and consequently, $M$ contains no proper submodule $N$ with $l(N/{\rm soc}(N))>l(N)$.
\end{lemma}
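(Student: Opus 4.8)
The plan is to reduce the statement to a small linear‑algebra fact about subspaces of $M/{\rm soc}(M)$, using the explicit shape of $\Omega^sV(r)$. By the classification recalled in \seref{1}, the indecomposable modules of $(s+1,s)$-type are exactly $\Omega^sV(0)$ and $\Omega^sV(1)$, so $M\cong\Omega^sV(r)$ for some $r\in{\mathbb Z}_2$. Since ${\rm rl}(M)=2$ we have $JM={\rm soc}(M)$ and $J^2M=0$, so $a$ and $d$ send $M$ into ${\rm soc}(M)$ and annihilate ${\rm soc}(M)$, hence induce linear maps $\bar a,\bar d\colon M/{\rm soc}(M)\to{\rm soc}(M)$, where $\dim{\rm soc}(M)=s$ and $\dim M/{\rm soc}(M)=s+1$. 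Using the description of $\Omega^sV(r)$ from \cite{Ch4}, I would fix a basis $\bar x_0,\dots,\bar x_s$ of $M/{\rm soc}(M)$ and a basis $y_1,\dots,y_s$ of ${\rm soc}(M)$ in which, up to signs, $\bar a(\sum_{j=0}^s v_j\bar x_j)=\sum_{j=1}^s v_jy_j$ and $\bar d(\sum_{j=0}^s v_j\bar x_j)=\sum_{j=1}^s v_{j-1}y_j$; equivalently, $(M/{\rm soc}(M),{\rm soc}(M),\bar a,\bar d)$ is the indecomposable preinjective representation of the Kronecker quiver of dimension vector $(s+1,s)$.

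Next I would carry out the reduction. Suppose $N\subseteq M$ is a submodule of $(i+1,i)$-type with $1\<i<s$, and put $\bar N=(N+{\rm soc}(M))/{\rm soc}(M)$. Every semisimple submodule of $N$ is a semisimple submodule of $M$, hence lies in ${\rm soc}(M)$, so $N\cap{\rm soc}(M)={\rm soc}(N)$; therefore $\bar N\cong N/{\rm soc}(N)$ has dimension $i+1$, and $\bar a(\bar v),\bar d(\bar v)\in N\cap{\rm soc}(M)={\rm soc}(N)$ for every $\bar v\in\bar N$, so $\bar a(\bar N)+\bar d(\bar N)\subseteq{\rm soc}(N)$; in particular $\dim(\bar a(\bar N)+\bar d(\bar N))\<i$ while $\dim\bar N=i+1\<s$. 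Thus it suffices to prove: \emph{every subspace $W$ of $M/{\rm soc}(M)$ with $\dim W\<s$ satisfies $\dim(\bar a(W)+\bar d(W))\>\dim W$.} For this, for $t\in k$ put $\phi_t=\bar a-t\bar d$; then $\phi_t(\sum_{j=0}^s v_j\bar x_j)=\sum_{j=1}^s(v_j-tv_{j-1})y_j$, so $\ker\phi_t$ is the line spanned by $(1,t,t^2,\dots,t^s)$, and, $k$ being algebraically closed and hence infinite, a Vandermonde argument shows these lines span $M/{\rm soc}(M)$ as $t$ runs over $k$. Since $\dim W\<s<s+1$, some vector $(1,t,\dots,t^s)$ lies outside $W$, so $\phi_t|_W$ is injective and $\dim(\bar a(W)+\bar d(W))\>\dim\phi_t(W)=\dim W$. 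Applying this with $W=\bar N$ gives $i+1\<\dim(\bar a(\bar N)+\bar d(\bar N))\<i$, a contradiction; hence $M$ has no submodule of $(i+1,i)$-type.

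For the last assertion, let $N\subsetneq M$ be a proper submodule with $l(N/{\rm soc}(N))>l({\rm soc}(N))$, and write $N=\bigoplus_jN_j$ with the $N_j$ indecomposable. Then $\sum_j(l(N_j/{\rm soc}(N_j))-l({\rm soc}(N_j)))>0$, so some $N_j$ has $l(N_j/{\rm soc}(N_j))>l({\rm soc}(N_j))$. Now ${\rm rl}(N_j)\<{\rm rl}(M)=2$, and among the indecomposable $D_4$-modules of Loewy length $\<2$ listed in \seref{1} (the simple modules and those of $(t+1,t)$-, $(t,t)$- or $(t-1,t)$-type) only the $(t+1,t)$-type ones satisfy $l(X/{\rm soc}(X))>l({\rm soc}(X))$; hence $N_j$ is of $(t+1,t)$-type for some $t\>1$. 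From $l(N_j)=2t+1\<l(M)=2s+1$ we obtain $t\<s$, and $t=s$ would force $N_j=M$, contradicting $N\subsetneq M$; so $1\<t<s$ and $N_j\subseteq M$ is of $(t+1,t)$-type with $t<s$, contradicting the first part.

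The argument is short once these ingredients are assembled; the step requiring the most care is the first paragraph — extracting from \cite{Ch4} that the $D_4$-action of $a$ and $d$ on $\Omega^sV(r)$ is the shift normal form, equivalently that the kernels of the pencil $\bar a-t\bar d$, $t\in k$, sweep out all of $M/{\rm soc}(M)$ — together with the bookkeeping identities $N\cap{\rm soc}(M)={\rm soc}(N)$ and $\dim\bar N=i+1$ that underpin the reduction.
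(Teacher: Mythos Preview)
Your argument is correct. The paper itself gives no proof beyond the sentence ``It is similar to \cite[Lemma 4.3]{Ch3}'', so there is nothing concrete to compare your approach against; your write-up supplies exactly the explicit argument the paper omits.

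Your route is the natural one once one recognises that an indecomposable $D_4$-module of Loewy length $2$ with only one-dimensional composition factors is the same data as an indecomposable representation of the Kronecker quiver via the pair $(\bar a,\bar d)\colon M/{\rm soc}(M)\to{\rm soc}(M)$. The $(s+1,s)$-type module $\O^sV(r)$ then corresponds to the unique preinjective indecomposable of that dimension vector, which is precisely your shift normal form; this identification is the content of the structural results in \cite{Ch4} that you invoke. The pencil/Vandermonde step is the standard way to see that such a preinjective Kronecker module has no ``wide'' subrepresentation, and your bookkeeping $N\cap{\rm soc}(M)={\rm soc}(N)$ and $\bar a(\bar N)+\bar d(\bar N)\subseteq{\rm soc}(N)$ translates this back to the $D_4$-module statement cleanly. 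The deduction of the ``consequently'' clause from the first part is also fine (note the paper's inequality $l(N/{\rm soc}(N))>l(N)$ is an evident misprint for $l(N/{\rm soc}(N))>l({\rm soc}(N))$, which is how you read it and how it is used in the proof of Lemma~\ref{2.27}).

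The only point you should tighten when writing this up formally is the one you already flag: rather than saying ``using the description of $\O^sV(r)$ from \cite{Ch4}'' you should either cite the specific result giving the matrix form, or else argue abstractly that the induced Kronecker representation of an indecomposable $D_4$-module of $(s+1,s)$-type must itself be indecomposable of dimension vector $(s+1,s)$, hence the preinjective one, so that the shift normal form is forced. Either way the argument goes through.
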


\begin{proof}
It is similar to \cite[Lemma 4.3]{Ch3}.
\end{proof}

\begin{lemma}\label{2.26-2}
Let $M$ be an indecomposable module of $(s, s)$-type with $s\>2$. Then $M$ contains no submodules
of $(i+1, i)$-type. Consequently, $M$ contains no submodule $N$ with $l(N/{\rm soc}(N))>l(N)$.
\end{lemma}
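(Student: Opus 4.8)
The plan is to argue by contradiction, working only with the two Loewy layers of $M:=M_s(r,\eta)$. Suppose $N\subseteq M$ is a submodule of $(i+1,i)$-type with $i\>1$. Since ${\rm rl}(M)=2$ we have ${\rm soc}(M)=JM$, and for any submodule $N\subseteq M$ one has ${\rm soc}(N)=\{n\in N\mid Jn=0\}=N\cap{\rm soc}(M)=N\cap JM$. So, setting $W:=N\cap JM$ and letting $U\subseteq M/JM$ be the image of $N$, we get $W={\rm soc}(N)$ of length $i$ and $U\cong N/{\rm soc}(N)$ of length $i+1$.

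Next I would bring $a$ and $d$ into play. Since the composition factors of $M$ are the one-dimensional simples $V(r)$ and $V(r+1)$, both $a$ and $d$ annihilate $JM={\rm soc}(M)$ and send $M$ into $JM$; hence they induce $k$-linear maps $\bar a,\bar d\colon M/JM\to JM$. As $N$ is a submodule, $aN\subseteq N\cap JM=W$ and $dN\subseteq W$, so $\bar a(U)\subseteq W$ and $\bar d(U)\subseteq W$. The key structural input is that for $M_s(r,\eta)$ at least one of $\bar a,\bar d$ is a $k$-linear \emph{isomorphism} $M/JM\to JM$ — namely $\bar a$ when $\eta\in k$ and $\bar d$ when $\eta=\infty$ — which is clear from the standard basis of $M_1(r,\eta)$ in \seref{1} and extends to all $s$ by induction along $0\to M_1(r,\eta)\to M_s(r,\eta)\to M_{s-1}(r,\eta)\to 0$: this sequence restricts to short exact sequences both on the tops and on the socles, so the five lemma propagates the isomorphism. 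Granting this — say $\bar a$ is an isomorphism — we get $i+1=\dim_k U=\dim_k\bar a(U)\le\dim_k W=i$, a contradiction; the case of $\bar d$ is identical. Hence $M$ has no submodule of $(i+1,i)$-type.

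For the remaining assertion — that $M$ has no submodule whose top is strictly longer than its socle — I would decompose such a submodule $N=\bigoplus_j N_j$ into indecomposables; at least one $N_j$ then still has top strictly longer than socle. By the classification recalled in \seref{1}, the only indecomposable $D_4$-modules with that property are the projectives $P(r')$ and the $(t+1,t)$-type modules $\O^tV(r')$, $t\>1$. A submodule of $M$ isomorphic to $P(r')$ would, $P(r')$ being injective, split off as a direct summand of the indecomposable module $M$, forcing $M\cong P(r')$ and contradicting ${\rm rl}(M)=2\neq3$; and a $(t+1,t)$-type submodule of $M$ is excluded by the first part. Hence no such $N$ exists.

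The only step I expect to require genuine care is the structural claim in the second paragraph, that one of $a,d$ acts as a linear bijection from the top layer of $M_s(r,\eta)$ onto its socle. This is transparent for $s=1$; for general $s$ it follows either from the explicit recursive pull-back construction of $M_s(r,\eta)$ in \cite{Ch4} — in the standard basis $a$ realizes the identity map from the top to the socle when $\eta\in k$, and $d$ does when $\eta=\infty$ — or, more conceptually, from the fact that an indecomposable $(s,s)$-type module is governed by a matrix pencil $(\bar a,\bar d)$ having neither a common kernel nor a common cokernel, which up to equivalence is a single Jordan block and hence always possesses an invertible member. Everything else is routine bookkeeping in a module of Loewy length two.
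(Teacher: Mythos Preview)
Your argument is correct and considerably more explicit than the paper's, which simply defers to \cite[Lemma 4.3]{Ch3} and \cite[Proposition 3.3]{Ch4}. The heart of your proof---that on an indecomposable $(s,s)$-type module at least one of $a,d$ induces a linear bijection from the top Loewy layer to the socle---is exactly the content of Lemmas~\ref{2.28} and~\ref{2.29} later in the paper, where explicit bases making this transparent are written down; your inductive five-lemma argument along the filtration by $M_i(r,\eta)$'s anticipates those lemmas without circularity. What your route buys is a completely self-contained proof that isolates the single linear-algebra fact driving the result, whereas the paper's approach imports the analogous statement from the earlier papers in the series.

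One small slip, harmless to the logic: in your final paragraph you list $P(r')$ among the indecomposables with top strictly longer than socle, but in fact $P(r')/JP(r')\cong V(r')\cong{\rm soc}(P(r'))$, so both have length~$1$. Your injectivity argument would still dispose of any projective summand, but the only indecomposables that genuinely need ruling out are the $\O^tV(r')$, and the first part already handles those.
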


\begin{proof}
It follows from \cite[Lemma 4.3]{Ch3} and \cite[Proposition 3.3]{Ch4}. It also can be shown by an
argument similar to the proof of \cite[Lemma 4.3]{Ch3}.
\end{proof}

\begin{lemma}\label{2.27}
Let $s\>1$ and $M$ be an indecomposable module of $(s, s)$-type. Then $M$ can be embedded into an indecomposable
module of $(s+1, s)$-type.
\end{lemma}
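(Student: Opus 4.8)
The plan is to prove the sharper statement that $M_s(r,\eta)$ embeds into $\O^sV(r')$, where $r'\in{\mathbb Z}_2$ is chosen so that ${\rm soc}(\O^sV(r'))\cong sV(r)\cong{\rm soc}(M_s(r,\eta))$; since by Section~1 every indecomposable module of $(s+1,s)$-type is some $\O^sV(r')$, this suffices. Tensoring by $V(1)$ is exact and, by Proposition~\ref{2.4} and Corollary~\ref{2.3}, carries $M_s(r,\eta)$ to $M_s(r+1,\eta)$ and $\O^sV(r')$ to $\O^sV(r'+1)$, so we may assume $r=0$; then $r'\in\{0,1\}$ with $r'\equiv s-1\pmod 2$, and from the structure recalled in Section~1 the module $\O^sV(r')$ has Loewy length $2$, top $\O^sV(r')/J\O^sV(r')\cong(s+1)V(1)$ and socle $sV(0)$.

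The main tool is a linear-algebra description of the relevant modules. If $N$ is any $D_4$-module with ${\rm rl}(N)=2$ all of whose composition factors are one dimensional, put $S={\rm soc}(N)=JN$ and $T=N/JN$; since $b$ and $c$ act by the involution that is $+1$ on $S$ and $-1$ on $T$, and since $a,d\in J(D_4)$ send $N$ into $S$ and kill $S$, the module $N$ is determined by the pair of linear maps $\overline a,\overline d\colon T\to S$ it induces, and conversely every such pair gives a $D_4$-module, all defining relations of $D_4$ being automatic. Moreover, for a subspace $W\subseteq T$ its preimage $N_W$ in $N$ is a submodule (since $JN_W\subseteq S\subseteq N_W$) with $N_W\supseteq S$ and $N_W/S\cong W$, every submodule of $N$ containing $S$ arises this way, and ${\rm soc}(N_W)=S$ as soon as $\overline a|_W$ or $\overline d|_W$ is injective. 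Under this dictionary, indecomposable modules correspond to indecomposable representations of the Kronecker quiver (two vertices joined by two parallel arrows): the indecomposable $D_4$-modules of $(s+1,s)$-type with socle a multiple of $V(0)$ (just $\O^sV(r')$) to the unique indecomposable representation of dimension vector $(s+1,s)$, and those of $(s,s)$-type with socle a multiple of $V(0)$, i.e. the $M_s(0,\zeta)$ with $\zeta\in{\mathbb P}^1(k)$, to the indecomposable representations of dimension vector $(s,s)$.

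Now the computation. The pair $(\overline a,\overline d)$ of $\O^sV(r')$ is, up to isomorphism, the unique indecomposable of dimension vector $(s+1,s)$, so we may take the model $T=\langle x_0,\dots,x_s\rangle$, $S=\langle y_1,\dots,y_s\rangle$ with $\overline a(x_0)=0$, $\overline a(x_i)=y_i$ for $1\le i\le s$, $\overline d(x_s)=0$, $\overline d(x_i)=y_{i+1}$ for $0\le i\le s-1$ (one checks directly that $\l\overline a+\mu\overline d$ has one-dimensional kernel for all $(\l,\mu)\neq 0$, so this is indeed indecomposable of the right dimension vector). For $\mathbf a=(a_1,\dots,a_s)\in k^s$ let $W_{\mathbf a}\subseteq T$ be the hyperplane $\{\sum c_ix_i:c_0+\sum_{i\ge 1}a_ic_i=0\}$, which avoids $\ker\overline a=\langle x_0\rangle$; then $\overline a|_{W_{\mathbf a}}$ is an isomorphism onto $S$, so ${\rm soc}(N_{W_{\mathbf a}})=S$ and $N_{W_{\mathbf a}}$ has Loewy length $2$ with top $sV(1)$ and socle $sV(0)$. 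Working in the basis $w_i=x_i-a_ix_0$ one finds that $\overline d|_{W_{\mathbf a}}\circ(\overline a|_{W_{\mathbf a}})^{-1}\colon S\to S$ is the companion matrix of $t^s+a_1t^{s-1}+\dots+a_s$; choosing $\mathbf a$ so that this polynomial is $(t-\l)^s$ makes the operator a single Jordan block, so $N_{W_{\mathbf a}}$ is indecomposable of $(s,s)$-type with parameter corresponding to $\l$. Letting $\l$ range over $k$, and using the symmetric family of hyperplanes avoiding $\ker\overline d=\langle x_s\rangle$ for the value $\oo$, the classification of Section~1 gives that $\O^sV(r')$ contains a submodule isomorphic to $M_s(0,\zeta)$ for every $\zeta\in{\mathbb P}^1(k)$; taking $\zeta=\eta$ and undoing the normalization $r=0$ finishes the proof.

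The step I expect to require the most care is the dictionary of the second paragraph — in particular the identification of the pair $(\overline a,\overline d)$ of $\O^sV(r')$ with the unique indecomposable of dimension vector $(s+1,s)$, and the bookkeeping that the submodules $N_W$ realize every parameter $\zeta\in{\mathbb P}^1(k)$; the companion-matrix computation and the verification of the $D_4$-relations are routine. A less conceptual alternative, in the spirit of the explicit computations earlier in this section, is to write down $k$-bases of $M_s(r,\eta)$ and $\O^sV(r')$ from \cite{Ch4} and exhibit an explicit injective $D_4$-linear map between them.
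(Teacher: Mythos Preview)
Your proof is correct and takes a genuinely different route from the paper's. The paper argues by induction on $s$ using almost split sequences: from a monomorphism $\s_{s-1}\colon M_{s-1}(0,\eta)\hookrightarrow\O^{s-1}V$, it first factors $\s_{s-1}$ through the AR-sequence $0\to M_{s-1}(0,\eta)\to M_{s-2}(0,\eta)\oplus M_s(0,\eta)\to M_{s-1}(0,\eta)\to 0$ to extract an epimorphism $\phi'\colon M_s(0,\eta)\twoheadrightarrow\O^{s-1}V$, and then factors $\phi'$ through the AR-sequence $0\to\O^{s+1}V\to\O^sV\oplus\O^sV\to\O^{s-1}V\to 0$ to extract the desired $\s_s\colon M_s(0,\eta)\hookrightarrow\O^sV$. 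Each factoring step requires a careful case analysis (invoking Lemmas~\ref{2.26} and~\ref{2.26-2} and the uniqueness statements of \cite[Theorem~3.10(2)]{Ch4}) to rule out the possibility that the extracted map has too small an image or too large a kernel. Your approach instead makes explicit the equivalence between Loewy-length-$2$ $D_4$-modules with socle a sum of copies of $V(0)$ and top a sum of copies of $V(1)$ and representations of the Kronecker quiver, and then locates $M_s(0,\eta)$ inside $\O^sV(r')$ by a direct companion-matrix computation on hyperplanes of the top. This is shorter and more transparent, and explains structurally why the embedding exists (every regular Kronecker indecomposable sits inside the preinjective of dimension one larger); the paper's argument, by contrast, stays entirely within the module- and AR-theoretic language already set up and avoids introducing the Kronecker dictionary.

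One small point of precision: your clause ``since $b$ and $c$ act by the involution that is $+1$ on $S$ and $-1$ on $T$'' is not automatic for an arbitrary $N$ with ${\rm rl}(N)=2$ and one-dimensional composition factors---you are using that ${\rm soc}(N)$ is isotypic of type $V(0)$, which holds for the specific modules at hand after your normalization $r=0$ but should be stated as a hypothesis on $N$ when you set up the dictionary.
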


\begin{proof}
By Corollary \ref{2.3} and Proposition \ref{2.4},
we may assume $M=M_s(0,\eta)$, where $\eta\in{\mathbb P}^1(k)$.
It is enough to show that there is a monomorphism $\s_s: M_s(0, \eta)\ra \O^sV$, where $V=V(0)$ for
$s$ being odd and $V=V(1)$ for $s$ being even. We prove the statement by induction on $s$.

Obviously, there is a $D_4$-module monomorphism $\s_1: M_1(0, \eta)\rightarrow\O V(0)$,
which is not split monomorphism since $\O V(0)$ is indecomposable.
From \cite[Theorem 3.10(5)]{Ch4}, there is an almost split sequence:
$$0\ra M_1(0, \eta)\xrightarrow{\tau} M_2(0, \eta)\ra M_1(0, \eta)\ra 0.$$
Hence $\s_1$ factors through $\tau$, i.e., there is a module map $\phi: M_2(0, \eta)\ra\O V(0)$
such that $\s_1=\phi\tau$. We claim that $\phi$ is an epimorphism.
In fact, we have $\O V(0)\supseteq{\rm Im}(\phi)\supseteq{\rm Im}(\s_1)\cong M_1(0, \eta)$.
Since $l(M_1(0, \eta))=2$ and $l(\O V(0))=3$, ${\rm Im}(\phi)={\rm Im}(\s_1)$ or
${\rm Im}(\phi)=\O V(0)$. If ${\rm Im}(\phi)={\rm Im}(\s_1)$, then $\phi$ can be regarded as
an epimorphism $\phi: M_2(0, \eta)\ra{\rm Im}(\phi)\cong M_1(0, \eta)$, which forces ${\rm Ker}(\phi)$ is of
$(1, 1)$-type. It follows from \cite[Theorem 3.10(2)]{Ch4} that ${\rm Ker}(\phi)={\rm Im}(\tau)$.
Thus, $\phi\tau=0\neq\s_1$, a contradiction. Hence $\phi$ is an epimorphism from $M_2(0, \eta)$ to $\O V(0)$.
From \cite[Theorem 3.5(2)]{Ch4}, there is an almost split sequence
$$0\ra\O^3V(0)\ra \O^2V(1)\oplus\O^2V(1)\xrightarrow{(f, g)}\O V(0)\ra 0.$$
It follows from \cite[Lemma V.5.1]{ARS} that both $f$ and $g$ are epimorphism since they are irreducible morphisms and
$l(\O^2V(1))>l(\O V(0))$.
Note that $\phi$ is not split epimorphism since $M_2(0, \eta)$ is indecomposable.
Hence $\phi$ factors through $(f,g)$, i.e., there is a $D_4$-module map
$\left(
   \begin{array}{c}
     \s_2 \\
     \s'_2\\
   \end{array}
 \right): M_2(0, \eta)\ra \O^2V(1)\oplus\O^2V(1)$ such that $f\s_2+g\s'_2=\phi$.
We first show that $l({\rm Im}(f\s_2))\neq 1$. In fact, if $l({\rm Im}(f\s_2))=1$ then
${\rm Im}(f\s_2)\subseteq{\rm soc}(\O V(0))\cong V(0)$, which implies that
${\rm Im}(f\s_2)\cong V(0)$. Hence $f\s_2$ induces an epimorphism
$\ol{f\s_2}: M_2(0, \eta)/(JM_2(0, \eta))\ra V(0)$. This is impossible since
$M_2(0, \eta)/(JM(0, \eta))\cong 2V(1)$. Similarly, $l({\rm Im}(g\s_2'))\neq 1$.
Without losing generality, we may assume $l({\rm Im}(f\s_2))\>l({\rm Im}(g\s_2'))$.
Since $\phi$ is an epimorphism, we have
$\O V(0)={\rm Im}(\phi)={\rm Im}(f\s_2+g\s'_2)\subseteq {\rm Im}(f\s_2)+{\rm Im}(g\s_2')$.
If ${\rm Im}(g\s_2')=0$ then ${\rm Im}(f\s_2)=\O V(0)$, and so
$f\s_2$ is an epimorphism from $M_2(0, \eta)$ to $\O V(0)$.
Now assume that ${\rm Im}(g\s_2')\neq 0$. Then $3=l(\O V(0))\>l({\rm Im}(f\s_2))\>l({\rm Im}(g\s_2'))\>2$.
If $l({\rm Im}(f\s_2))=2$ then $l({\rm Im}(g\s_2'))=2$. In this case,
$[{\rm Im}(f\s_2)]=[{\rm Im}(g\s'_2)]=[V(0)]+[V(1)]$ in $G_0(D_4)$, and consequently
$[{\rm Ker}(f\s_2)]=[{\rm Ker}(g\s'_2)]=[V(0)]+[V(1)]$ in $G_0(D_4)$ since
$[M_2(0, \eta)]=2[V(0)]+2[V(1)]$. This implies that both ${\rm Ker}(f\s_2)$ and ${\rm Ker}(g\s'_2)$
are submodules of $(1, 1)$-type in $M_2(0, \eta)$. It follows from \cite[Theorem 3.10(2)]{Ch4}
that ${\rm Ker}(f\s_2)={\rm Ker}(g\s'_2)$. This implies that
${\rm Ker}(f\s_2)={\rm Ker}(g\s'_2)\subseteq {\rm Ker}(\phi)\cong V(0)$, a contradiction.
Thus, we have proved that $f\s_2$ is an epimorphism from $M_2(0, \eta)$ to $\O V(0)$.
It follows that ${\rm Ker}(f\s_2)\cong V(0)$. Since ${\rm Ker}(\s_2)\subseteq {\rm Ker}(f\s_2)$,
${\rm Ker}(\s_2)\cong V(0)$ or ${\rm Ker}(\s_2)=0$. If ${\rm Ker}(\s_2)\cong V(0)$, then
${\rm Ker}(\s_2)={\rm Ker}(f\s_2)$, and hence ${\rm Im}(\s_2)\cap{\rm Ker}(f)=0$,
which implies that $\O^2 V(1)={\rm Im}(\s_2)\oplus {\rm Ker}(f)$ by comparing the lengths
of the both sides since $f: \O^2V(1)\ra \O V(0)$ is an epimorphism. This is impossible
since $\O^2V(1)$ is indecomposable. Hence ${\rm Ker}(\s_2)=0$, and so
$\s_2: M_2(0, \eta)\ra \O^2V(1)$ is a monomorphism.

Now let $s>2$ and assume that there is a monomorphism $\s_i: M_i(0, \eta)\ra\O^iV(0)$
for any $1\<i<s$ with $i$ being odd, and there is a monomorphism
$\s_i: M_i(0, \eta)\ra\O^iV(1)$ for any $1\<i<s$ with $i$ being even.
By \cite[Theorem 3.10(5)]{Ch4}, there is an almost split sequence:
$$0\ra M_{s-1}(0, \eta)\xrightarrow{\left(
                                      \begin{array}{c}
                                        g_1 \\
                                        f_1 \\
                                      \end{array}
                                    \right)
}M_{s-2}(0, \eta)\oplus M_s(0, \eta)\ra M_{s-1}(0, \eta)\ra 0.$$
Then $g_1$ is an epimorphism and $f_1$ is a monomorphism since they are irreducible morphisms
and $l(M_s(0, \eta))>l(M_{s-1}(0, \eta))>l(M_{s-2}(0, \eta))$.

Assume $s$ is odd. Then $s-1$ is even. By the induction hypothesis, there is a monomorphism
$\s_{s-1}: M_{s-1}(0, \eta)\ra\O^{s-1}V(1)$, which is not a split monomorphism since
$\O^{s-1}V(1)$ is indecomposable. Hence $\s_{s-1}$ factors through
$\left(\begin{array}{c}
       g_1 \\
       f_1 \\
       \end{array}\right)$, i.e., there is a $D_4$-module map
$(\psi, \phi'): M_{s-2}(0, \eta)\oplus M_s(0, \eta)\ra\O^{s-1}V(1)$ such that
$\psi g_1+\phi' f_1=\s_{s-1}$. We claim that $\phi' f_1: M_{s-1}(0, \eta)\ra \O^{s-1}V(1)$ is injective.
In fact, let $N_i$ be the submodule of $(i, i)$-type in $M_{s-1}(0, \eta)$ for all $1\<i\<s-1$.
Then by \cite[Theorem 3.10(2)]{Ch4}, $N_1\subset N_2\subset\cdots\subset N_{s-1}=M_{s-1}(0, \eta)$
and $N_i\cong M_i(0, \eta)$ for all $1\<i\< s-1$. Moreover, ${\rm Ker}(g_1)=N_1$.
Hence $(\phi' f_1)(N_1)=(\psi g_1+\phi' f_1)(N_1)=\s_{s-1}(N_1)$. Since $\s_{s-1}$ is injective,
${\rm Ker}(\phi' f_1)\cap N_1=0$. If ${\rm Ker}(\phi' f_1)\neq 0$, then there is an $i$ with
$1\<i<s-1$ such that ${\rm Ker}(\phi' f_1)\cap N_i=0$ but ${\rm Ker}(\phi' f_1)\cap N_{i+1}\neq 0$.
Thus, the sum $N_i+({\rm Ker}(\phi' f_1)\cap N_{i+1})$ is a direct sum and is a submodule of $N_{i+1}$,
which implies that $1\<l({\rm Ker}(\phi' f_1)\cap N_{i+1})\< l(N_{i+1})-l(N_i)=2$.
If $l({\rm Ker}(\phi' f_1)\cap N_{i+1})=2$, then $N_{i+1}=N_i\oplus ({\rm Ker}(\phi' f_1)\cap N_{i+1})$.
This is impossible since $N_{i+1}\cong M_{i+1}(0, \eta)$ is indecomposable.
Hence $l({\rm Ker}(\phi' f_1)\cap N_{i+1})=1$, and so
${\rm Ker}(\phi' f_1)\cap N_{i+1}\subseteq{\rm soc}(N_{i+1})$, which implies that
${\rm Ker}(\phi' f_1)\cap N_{i+1}\cong V(0)$. Hence
$[(\phi' f_1)(N_{i+1})]=[N_{i+1}]-[{\rm Ker}(\phi' f_1)\cap N_{i+1}]=(i+1)[V(1)]+i[V(0)]$ in $G_0(D_4)$.
It follows that $l((\phi' f_1)(N_{i+1})/{\rm soc}((\phi' f_1)(N_{i+1})))>l({\rm soc}((\phi' f_1)(N_{i+1})))$
since ${\rm soc}(\O^{s-1}V(1))\cong (s-1)V(0)$ and $\O^{s-1}V(1)/{\rm soc}(\O^{s-1}V(1))\cong sV(1)$.
This contradicts Lemma \ref{2.26}. Hence ${\rm Ker}(\phi' f_1)=0$, which shows the claim that
$\phi' f_1: M_{s-1}(0, \eta)\ra \O^{s-1}V(1)$ is injective.
Then $\phi'({\rm Im}(f_1))={\rm Im}(\phi' f_1)\cong M_{s-1}(0, \eta)\cong{\rm Im}(f_1)$
since $f_1$ is injective. Thus, $l(\phi'({\rm Im}(f_1)))=l({\rm Im}(f_1))$, and so
${\rm Im}(f_1)\cap{\rm Ker}(\phi')=0$. It follows that the sum ${\rm Im}(f_1)+{\rm Ker}(\phi')$ is a direct sum.
Note that ${\rm Im}(\phi' f_1)\subseteq{\rm Im}(\phi')$. It follows that
$2(s-1)=l(M_{s-1}(0, \eta))=l({\rm Im}(\phi' f_1))\<l({\rm Im}(\phi'))\<l(\O^{s-1}V(1))=2s-1$,
and so $l({\rm Im}(\phi'))=2(s-1)$ or $l({\rm Im}(\phi'))=2s-1$.
If $l({\rm Im}(\phi'))=2(s-1)$, then $l({\rm Ker}(\phi'))=l(M_s(0,\eta))-l({\rm Im}(\phi'))=2$,
and hence $l({\rm Im}(f_1)+{\rm Ker}(\phi'))=l({\rm Im}(f_1))+l({\rm Ker}(\phi'))=2s=l(M_s(0, \eta))$.
It follows that $M_s(0, \eta)={\rm Im}(f_1)\oplus{\rm Ker}(\phi')$, a contradiction.
Therefore, $l({\rm Im}(\phi'))=2s-1=l(\O^{s-1}V(1))$, which shows that
$\phi': M_s(0, \eta)\ra \O^{s-1}V(1)$ is an epimorphism.

Note that $\phi'$ is not a split epimorphism
since $M_s(0, \eta)$ is indecomposable. From \cite[Theorem 3.5(2)]{Ch4}, there is an almost split sequence
$$0\ra\O^{s+1}V(1)\ra \O^sV(0)\oplus\O^sV(0)\xrightarrow{(f', g')}\O^{s-1}V(1)\ra 0.$$
Hence $\phi'$ factors through $(f', g')$, that is, there is a $D_4$-module map
$\left(
  \begin{array}{c}
    \s_s \\
    \s_s' \\
  \end{array}
\right): M_s(0, \eta)\ra \O^sV(0)\oplus\O^sV(0)$ such that $f'\s_s+g'\s_s'=\phi'$.
Without losing generality, we may assume $l({\rm Im}(f'\s_s))\>l({\rm Im}(g'\s_s'))$.
Then $f'\s_s\neq 0$ since $\phi'\neq 0$.
We first show that $f'\s_s$ is an epimorphism. If ${\rm Im}(g'\s_s')\subseteq{\rm soc}(\O^{s-1}V(1))$,
then $g'\s_s'$ induces a $D_4$-module map $\ol{g'\s_s'}: M_s(0, \eta)/(JM_s(0, \eta))\ra{\rm soc}(\O^{s-1}V(1))$.
Since $s-1$ is even, ${\rm soc}(\O^{s-1}V(1))\cong(s-1)V(0)$. However, $M_s(0, \eta)/(JM_s(0, \eta))\cong sV(1)$.
It follows that $\ol{g'\s_s'}=0$, and hence $g'\s_s'=0$. In this case, $f'\s_s=\phi'$ is an epimorphism.
If ${\rm Im}(g'\s_s')=\O^{s-1}V(1)$, then $f'\s_s$ is also an epimorphism by $l({\rm Im}(f'\s_s))\>l({\rm Im}(g'\s_s'))$.
Now assume that ${\rm Im}(g'\s_s')\neq\O^{s-1}V(1)$ and ${\rm Im}(g'\s_s')\nsubseteq{\rm soc}(\O^{s-1}V(1))$.
Then ${\rm rl}({\rm Im}(g'\s_s'))=2$ and $l({\rm Im}(g'\s_s'))\<2(s-1)$ by $l(\O^{s-1}V(1))=2s-1$.
Let $i=l({\rm Im}(g'\s_s')/{\rm soc}({\rm Im}(g'\s_s')))$ and $j=l({\rm soc}({\rm Im}(g'\s_s')))$.
Then $[{\rm Im}(g'\s_s')]=j[V(0)]+i[V(1)]$, and hence
$[{\rm Ker}(g'\s_s')]=[M_s(0, \eta)]-[{\rm Im}(g'\s_s')]=(s-j)[V(0)]+(s-i)[V(1)]$ in $G_0(D_4)$.
It follows that $l({\rm Ker}(g'\s_s')/{\rm soc}({\rm Ker}(g'\s_s')))=s-i$ and $l({\rm soc}({\rm Ker}(g'\s_s')))=s-j$.
By Lemma \ref{2.26}, we have $1\<i\<j\<s-1$. By Lemma \ref{2.26-2}, we have
$s-i\<s-j$, and hence $j\<i$. It follows that $1\<i=j\<s-1$ and $1\<s-i=s-j\<s-1$.
Again by Lemma \ref{2.26-2}, ${\rm Ker}(g'\s_s')$ contains an indecomposable
summand of $(t, t)$-type for some $1\<t\<s-i$. Then by \cite[Theorem 3.10(2)]{Ch4}, ${\rm Ker}(g'\s_s')$ contains
an indecomposable submodule of $(1,1)$-type. Since $f'\s_s\neq 0$, ${\rm Im}(f'\s_s)\nsubseteq{\rm soc}(\O^{s-1}V(1))$
by the same argument as above for $g'\s_s'$. If ${\rm Im}(f'\s_s)\neq\O^{s-1}V(1)$, then one can similarly check that
${\rm Ker}(f'\s_s)$ contains an indecomposable submodule of $(1,1)$-type. From \cite[Theorem 3.10(2)]{Ch4}, one knows that
$M_s(0, \eta)$ has a unique submodule of $(1, 1)$-type, denoted by $N$. Hence
$N\subseteq {\rm Ker}(f'\s_s)\cap {\rm Ker}(g'\s'_s)\subseteq {\rm Ker}(\phi')$, and so
$l({\rm Ker}(\phi'))\>l(N)=2$. Thus, $l({\rm Im}(\phi'))=l(M_s(0, \eta))-l({\rm Ker}(\phi'))\<2s-2<l(\O^{s-1}V(1))$,
which is impossible since $\phi'$ is surjective. Therefore, $f'\s_s$ is an epimorphism
from $M_s(0, \eta)$ to $\O^{s-1}V(1)$. Then by an argument similar to the one for
$\s_2$ before, one can easily check that
$\s_s: M_s(0, \eta)\ra \O^sV(0)$ is a monomorphism.

If $s$ is even, then the same argument as above shows that there is a monomorphism
$\s_s: M_s(0, \eta)\ra \O^sV(1)$.
\end{proof}

\begin{lemma}\label{2.28}
Let $s\>1$, $r\in{\mathbb Z}_2$ and $\eta\in k$. Then there is a basis $\{v_{1,1}, v_{1,2}, \cdots, v_{1,s},$
$v_{2,1}, v_{2,2}, \cdots, v_{2, s}\}$ in $M_s(r,\eta)$ such that
$$\begin{array}{lll}
a\cdot v_{1,i}=v_{2,i},& b\cdot v_{1,i}=c\cdot v_{1,i}=(-1)^{r+1}v_{1,i},& 1\<i\<s,\\
d\cdot v_{1,1}=-\eta v_{2,1},& d\cdot v_{1,i}=-v_{2,i-1}-\eta v_{2,i},& 2\<i\<s,\\
a\cdot v_{2,i}=d\cdot v_{2,i}=0,& b\cdot v_{2,i}=c\cdot v_{2,i}=(-1)^rv_{2,i},& 1\<i\<s.\\
\end{array}$$
\end{lemma}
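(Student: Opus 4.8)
The plan is to write down explicitly, via the displayed structure constants, a $2s$-dimensional space $N$, check by hand that $N$ is a $D_4$-module, and then identify $N$ with $M_s(r,\eta)$ using the classification of indecomposable $(s,s)$-type modules recalled in Section~1. So let $N$ have basis $\{v_{1,i},v_{2,i}\mid 1\<i\<s\}$, let $a,b,c,d$ act by the formulas of the statement (with the convention $v_{2,0}=0$), and verify the defining relations of $D_4$ on each basis vector. Because $b$ and $c$ act by the \emph{same} scalar on every basis vector, $bc$ acts on $N$ as the identity, so $1-bc=0$ and the relation $da+ad=1-bc$ reduces to $da+ad=0$, which holds since $a$ annihilates the $v_{2,i}$ while $d$ carries each $v_{1,i}$ into $\langle v_{2,1},\dots,v_{2,s}\rangle$; the relations $a^2=d^2=0$, $b^2=c^2=1$, $ba=-ab$, $ca=-ac$, $db=-bd$, $dc=-cd$, $bc=cb$ are one-line checks. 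Thus $N\in{\rm mod}D_4$.

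Next I would read off the socle and Loewy length. The subspace $S=\langle v_{2,1},\dots,v_{2,s}\rangle$ is a semisimple submodule $\cong sV(r)$, being killed by $a$ and $d$ and spanned by $(b,c)$-eigenvectors. Since $b$ and $c$ act identically on $N$, no $2$-dimensional simple can occur in $N$, so every simple submodule of $N$ is a copy of $V(0)$ or $V(1)$ and hence is killed by $a$; as $\ker(a|_N)=S$ (because $a$ sends $\sum_i\a_iv_{1,i}+\sum_i\b_iv_{2,i}$ to $\sum_i\a_iv_{2,i}$), this forces ${\rm soc}(N)\subseteq S$, so ${\rm soc}(N)=S\cong sV(r)$. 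The quotient $N/S$ has basis the images of the $v_{1,i}$, which are killed by $a$ and $d$, so $N/S\cong sV(r+1)$ is semisimple; hence $JN\subseteq S$, $J^2N\subseteq JS=0$, and as $N$ is not semisimple we get ${\rm rl}(N)=2$ with $l(N/{\rm soc}(N))=l({\rm soc}(N))=s$. Thus $N$ is of $(s,s)$-type once we know it is indecomposable.

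To prove indecomposability I would compute ${\rm End}_{D_4}(N)$. As char$(k)\neq2$, the scalars $(-1)^r$ and $(-1)^{r+1}$ are distinct, so any $\phi\in{\rm End}_{D_4}(N)$ preserves the two $(b,c)$-eigenspaces and we may write $\phi(v_{1,i})=\sum_jc_{ji}v_{1,j}$ and $\phi(v_{2,i})=\sum_jd_{ji}v_{2,j}$. Compatibility with $a$, via $av_{1,i}=v_{2,i}$, forces $d_{ji}=c_{ji}$; compatibility with $d$ then forces $c_{k,i-1}=c_{k+1,i}$ for all $i\>2$ and $c_{k+1,1}=0$ for all $k\>1$, which together say that $(c_{ji})$ is an upper triangular Toeplitz matrix, and conversely every such matrix (with $d_{ji}:=c_{ji}$) yields an endomorphism. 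Hence ${\rm End}_{D_4}(N)$ is isomorphic to the algebra of $s\times s$ upper triangular Toeplitz matrices, that is, to $k[t]/(t^s)$, which is local; therefore $N$ is indecomposable.

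It then follows from \cite[Proposition 3.11 and Theorem 3.10(1)]{Ch4} that $N\cong M_s(r,\eta')$ for some $\eta'\in{\mathbb P}^1(k)$, the parameter $r$ being forced by ${\rm soc}(N)\cong sV(r)$. Now $\langle v_{1,1},v_{2,1}\rangle$ is a submodule of $N$ of $(1,1)$-type whose structure constants are precisely those of the standard basis of $M_1(r,\eta)$ listed in Section~1, so $\langle v_{1,1},v_{2,1}\rangle\cong M_1(r,\eta)$; on the other hand $M_s(r,\eta')$ contains a unique submodule of $(1,1)$-type, which is $\cong M_1(r,\eta')$, by \cite[Theorem 3.10(2)]{Ch4}. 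Hence $M_1(r,\eta)\cong M_1(r,\eta')$, and \cite[Theorem 3.10(4)]{Ch4} gives $\eta=\eta'$; therefore $N\cong M_s(r,\eta)$, which is the assertion. The one place needing an idea rather than a routine verification is the computation of ${\rm End}_{D_4}(N)$; an alternative that avoids it is an induction on $s$ using the exact sequence $0\ra M_1(r,\eta)\ra M_s(r,\eta)\ra M_{s-1}(r,\eta)\ra0$ of Section~1, lifting an inductively constructed basis of $M_{s-1}(r,\eta)$ to $M_s(r,\eta)$, at the cost of bookkeeping to make the correction terms $-v_{2,i-1}$ appear with the correct coefficients (i.e.\ unwinding the pullback description of $M_s(r,\eta)$).
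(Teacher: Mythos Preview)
Your proof is correct and takes a genuinely different route from the paper. The paper argues by induction on $s$: it fixes the unique $(s-1,s-1)$-type submodule $N\cong M_{s-1}(r,\eta)$ inside $M_s(r,\eta)$, uses the inductive hypothesis to get a good basis of $N$, lifts a standard basis of $M/N\cong M_1(r,\eta)$ to $M$, and then adjusts the lifts (via a nonvanishing coefficient $\a_{s-1}$ forced by indecomposability of $M/L\cong M_2(r,\eta)$) to produce $v_{1,s},v_{2,s}$ with the required $d$-action. You instead build an abstract $D_4$-module $N$ from the displayed formulas, show it is indecomposable of $(s,s)$-type by computing ${\rm End}_{D_4}(N)\cong k[t]/(t^s)$, and then invoke the classification in \cite{Ch4} together with the $(1,1)$-type submodule $\langle v_{1,1},v_{2,1}\rangle\cong M_1(r,\eta)$ to pin down the parameter. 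Your approach trades the somewhat delicate lifting argument for a clean linear-algebra computation of the endomorphism ring, and it leans on \cite[Theorem 3.10, Proposition 3.11]{Ch4} exactly as the paper does elsewhere (e.g.\ in Proposition~\ref{2.4}); the paper's approach is more self-contained and constructive inside the given module, and is essentially the ``alternative'' you sketch in your last sentence.
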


\begin{proof}
We prove the lemma by induction on $s$. For $s=1$, it follows from Section 1.
Now let $s\>2$ and suppose that $M_i(r,\eta)$ has a desired basis for all $1\<i<s$.

Let $M=M_s(r, \eta)$. Then by \cite[Theorem 3.10(2)]{Ch4}, $M$ contains a unique submodule $N$
of $(s-1,s-1)$-type. Moreover, $N\cong M_{s-1}(r,\eta)$ and $M/N\cong M_1(r,\eta)$.
By the induction hypothesis, there is a basis $\{v_{1,1}, v_{1,2}, \cdots, v_{1,s-1},
v_{2,1}, v_{2,2}, \cdots, v_{2, s-1}\}$ in $N$ such that
$$\begin{array}{lll}
a\cdot v_{1,i}=v_{2,i},& b\cdot v_{1,i}=c\cdot v_{1,i}=(-1)^{r+1}v_{1,i},& 1\<i\<s-1,\\
d\cdot v_{1,1}=-\eta v_{2,1},& d\cdot v_{1,i}=-v_{2,i-1}-\eta v_{2,i},& 2\<i\<s-1,\\
a\cdot v_{2,i}=d\cdot v_{2,i}=0,& b\cdot v_{2,i}=c\cdot v_{2,i}=(-1)^rv_{2,i},& 1\<i\<s-1.\\
\end{array}$$
Define a subspace $L$ of $N$ by $L={\rm span}\{v_{1,i}, v_{2,i}|1\<i\<s-2\}$ for $s>2$, and $L=0$ for $s=2$. Then $L$ is obviously a submodule of $N$,
and $L\cong M_{s-2}(r,\eta)$ for $s>2$ by the induction hypothesis. It follows from \cite[Theorem 3.10(2)]{Ch4}
that $M/L\cong M_2(r, \eta)$. By the structure $M_1(r,\eta)$,
there is basis $\{x_1, x_2\}$ in $M/N$ such that
\begin{equation*}
\begin{array}{lll}
a\cdot x_1=x_2, & d\cdot x_1=-\eta x_2, & b\cdot x_1=c\cdot x_1=(-1)^{r+1}x_1,\\
a\cdot x_2=0, & d\cdot x_2=0, & b\cdot x_2=c\cdot x_2=(-1)^{r}x_2.\\
\end{array}
\end{equation*}
Let $\pi: M\ra M/N$  be the canonical epimorphism. Since $x_1\in(M/N)_{(r+1)}$ and $x_2\in(M/N)_{(r)}$,
$x_1=\pi(u_1)$ and $x_2=\pi(u_2)$ for some $u_1\in M_{(r+1)}$ and $u_2\in M_{(r)}$.
Obviously, $u_1\notin N$ and $u_2\notin N$.
Note that $a\cdot M_{(r)}=d\cdot M_{(r)}=0$, $a\cdot M_{(r+1)}\subseteq M_{(r)}$ and
$d\cdot M_{(r+1)}\subseteq M_{(r)}$. From $a\cdot x_1=x_2$, one gets $\pi(a\cdot u_1)=\pi(u_2)$.
Hence $a\cdot u_1-u_2\in N\cap M_{(r)}$, and so $a\cdot u_1=u_2+x$ for some $x\in N\cap M_{(r)}=N_{(r)}$.
By replacing $u_2$ with $u_2+x$, we may assume that $x=0$, i.e., $a\cdot u_1=u_2$.
From $d\cdot x_1=-\eta x_2$, one gets $\pi(d\cdot u_1)=\pi(-\eta u_2)$.
Hence $d\cdot u_1+\eta u_2\in N\cap M_{(r)}$, and so
$d\cdot u_1=-\eta u_2+y$ for some $y\in N\cap M_{(r)}=N_{(r)}$. Since $\{v_{2,i}|1\<i\<s-1\}$
is a basis of $N_{(r)}$, we have $y=\sum_{i=1}^{s-1}\a_iv_{2, i}$ for some
$\a_1, \a_2, \cdots, \a_{s-1}\in k$. If $\a_{s-1}=0$ then
$y\in L$. In this case, $\{\ol{v_{1, s-1}}, \ol{v_{2, s-1}}, \ol{u_1}, \ol{u_2}\}$ is a basis
of $M/L$, where $\ol{v}$ denotes the image of $v\in M$ under the canonical epimorphism
$M\ra M/L$. Moreover,
both ${\rm span}\{\ol{v_{1, s-1}}, \ol{v_{2, s-1}}\}$ and ${\rm span}\{\ol{u_1}, \ol{u_2}\}$
are submodules of $M/L$, and $M/L={\rm span}\{\ol{v_{1, s-1}}, \ol{v_{2, s-1}}\}\oplus {\rm span}\{\ol{u_1}, \ol{u_2}\}$.
This is impossible since $M/L\cong M_2(r,\eta)$ is indecomposable. Hence $\a_{s-1}\neq 0$.
Now let $v_{1,s}=-\a_{s-1}^{-1}(u_1+\sum_{i=1}^{s-2}\a_iv_{1,i+1})$ and
$v_{2,s}=-\a_{s-1}^{-1}(u_2+\sum_{i=1}^{s-2}\a_iv_{2,i+1})$. Here we regard
$\sum_{i=1}^{s-2}\a_iv_{1,i+1}=0$ and $\sum_{i=1}^{s-2}\a_iv_{2,i+1}=0$ for $s=2$.
Then $v_{1,s}\in M_{(r+1)}\backslash N$ and $v_{2,s}\in M_{(r)}\backslash N$. Hence
$\{v_{1,1}, v_{1,2}, \cdots, v_{1,s-1}, v_{1, s},
v_{2,1}, v_{2,2}, \cdots, v_{2, s-1}, v_{2,s}\}$ is a basis of $M$. Moreover, we have
$$\begin{array}{rcl}
a\cdot v_{1,s}&=&-\a_{s-1}^{-1}(a\cdot u_1+\sum_{i=1}^{s-2}\a_ia\cdot v_{1,i+1})\\
&=&-\a_{s-1}^{-1}(u_2+\sum_{i=1}^{s-2}\a_iv_{2,i+1})\\
&=& v_{2,s}\\
\end{array}$$
and
$$\begin{array}{rcl}
d\cdot v_{1,s}&=&-\a_{s-1}^{-1}(d\cdot u_1+\sum_{i=1}^{s-2}\a_id\cdot v_{1,i+1})\\
&=&-\a_{s-1}^{-1}(-\eta u_2+\sum_{i=1}^{s-1}\a_iv_{2, i}+\sum_{i=1}^{s-2}\a_i(-v_{2,i}-\eta v_{2,i+1}))\\
&=&-\a_{s-1}^{-1}(-\eta u_2+\a_{s-1}v_{2,s-1}-\sum_{i=1}^{s-2}\a_i\eta v_{2,i+1})\\
&=& -v_{2,s-1}-\eta(-\a_{s-1}^{-1}(u_2+\sum_{i=1}^{s-2}\a_iv_{2,i+1}))\\
&=& -v_{2,s-1}-\eta v_{2,s}.\\
\end{array}$$
This shows that $\{v_{1,1}, v_{1,2}, \cdots, v_{1,s-1}, v_{1, s},
v_{2,1}, v_{2,2}, \cdots, v_{2, s-1}, v_{2,s}\}$ is a desired basis of $M$.
\end{proof}

\begin{lemma}\label{2.29}
Let $s\>1$ and $r\in{\mathbb Z}_2$. Then there is a basis $\{v_{1,1}, v_{1,2}, \cdots, v_{1,s},
v_{2,1},$ $v_{2,2}, \cdots, v_{2, s}\}$ in $M_s(r,\infty)$ such that
$$\begin{array}{lll}
a\cdot v_{1,1}=0,& a\cdot v_{1,i}=v_{2,i-1},& 2\<i\<s,\\
b\cdot v_{1,i}=c\cdot v_{1,i}=(-1)^{r+1}v_{1,i},& d\cdot v_{1,i}=v_{2,i},& 1\<i\<s,\\
a\cdot v_{2,i}=d\cdot v_{2,i}=0,& b\cdot v_{2,i}=c\cdot v_{2,i}=(-1)^rv_{2,i},& 1\<i\<s.\\
\end{array}$$
\end{lemma}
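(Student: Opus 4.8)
This is the $\eta=\infty$ counterpart of Lemma~\ref{2.28}, so I would prove it by the same induction on $s$. For $s=1$ the required basis is precisely the standard basis of $M_1(r,\infty)$ exhibited in Section~1 (there $a\cdot v_{1,1}=0$ and $d\cdot v_{1,1}=v_{2,1}$). So let $s\>2$, put $M=M_s(r,\infty)$, and let $N\subseteq M$ be the unique submodule of $(s-1,s-1)$-type; by the structure theory recalled in Section~1, $N\cong M_{s-1}(r,\infty)$ and $M/N\cong M_1(r,\infty)$. Using the induction hypothesis, fix a basis $\{v_{1,1},\dots,v_{1,s-1},v_{2,1},\dots,v_{2,s-1}\}$ of $N$ with the stated action; then $v_{1,i}\in M_{(r+1)}$ and $v_{2,i}\in M_{(r)}$, so $N_{(r)}={\rm span}\{v_{2,1},\dots,v_{2,s-1}\}$.

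Next pick a basis $\{x_1,x_2\}$ of $M/N\cong M_1(r,\infty)$ with $a\cdot x_1=0$, $d\cdot x_1=x_2$, $b\cdot x_1=c\cdot x_1=(-1)^{r+1}x_1$ and $a\cdot x_2=d\cdot x_2=0$, $b\cdot x_2=c\cdot x_2=(-1)^r x_2$. Since $M$ has only one-dimensional composition factors, $M=M_{(0)}\oplus M_{(1)}$ and the projection $\pi\colon M\ra M/N$ is compatible with this decomposition, so we may lift $x_1=\pi(u_1)$, $x_2=\pi(u_2)$ with $u_1\in M_{(r+1)}$, $u_2\in M_{(r)}$; then $a\cdot u_2=d\cdot u_2=0$ automatically, since $a\cdot M_{(r)}=d\cdot M_{(r)}=0$. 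From $a\cdot x_1=0$ and $a\cdot M_{(r+1)}\subseteq M_{(r)}$ we get $a\cdot u_1\in N_{(r)}$, say $a\cdot u_1=\sum_{i=1}^{s-1}\beta_iv_{2,i}$; and from $d\cdot x_1=x_2$ we get $d\cdot u_1-u_2\in N_{(r)}$, so after adding to $u_2$ a suitable element of $N_{(r)}$ (which does not change $\pi(u_2)$) we may assume $d\cdot u_1=u_2$.

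The one essential point is that $\beta_{s-1}\neq 0$. To see this, put $L={\rm span}\{v_{1,i},v_{2,i}\mid 1\<i\<s-2\}$ (with $L=0$ when $s=2$); this is a submodule of $M$ isomorphic to $M_{s-2}(r,\infty)$, hence the unique $(s-2,s-2)$-submodule of $M$, and $M/L\cong M_2(r,\infty)$ by the structure theory. One checks that $\{\ol{v_{1,s-1}},\ol{v_{2,s-1}},\ol{u_1},\ol{u_2}\}$ is a basis of $M/L$, that $N/L={\rm span}\{\ol{v_{1,s-1}},\ol{v_{2,s-1}}\}$ is a submodule, and that in $M/L$ one has $a\cdot\ol{u_1}=\beta_{s-1}\ol{v_{2,s-1}}$, $d\cdot\ol{u_1}=\ol{u_2}$ and $a\cdot\ol{u_2}=d\cdot\ol{u_2}=0$. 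Hence if $\beta_{s-1}=0$, then ${\rm span}\{\ol{u_1},\ol{u_2}\}$ is a submodule of $M/L$ complementary to $N/L$, so $M/L$ decomposes, contradicting the indecomposability of $M_2(r,\infty)$. Granting $\beta_{s-1}\neq 0$, set
\[
v_{1,s}=\beta_{s-1}^{-1}(u_1-\textstyle\sum_{i=1}^{s-2}\beta_iv_{1,i+1}),\qquad
v_{2,s}=\beta_{s-1}^{-1}(u_2-\textstyle\sum_{i=1}^{s-2}\beta_iv_{2,i+1})
\]
(empty sums for $s=2$). A direct computation with the relations on $N$ then gives $a\cdot v_{1,s}=v_{2,s-1}$, $d\cdot v_{1,s}=v_{2,s}$, $a\cdot v_{2,s}=d\cdot v_{2,s}=0$, and the $b$- and $c$-eigenvalues come out as required; since $\pi(v_{1,s})\neq 0\neq\pi(v_{2,s})$ and $\dim M=2s$, the set $\{v_{1,1},\dots,v_{1,s},v_{2,1},\dots,v_{2,s}\}$ is a basis of $M$ with the desired action. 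The genuine obstacle is exactly the step $\beta_{s-1}\neq 0$; the rest is bookkeeping with the ${\mathbb Z}_2$-grading $M=M_{(0)}\oplus M_{(1)}$ introduced before Lemma~\ref{2.12}.

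An alternative, essentially computation-free route is to twist Lemma~\ref{2.28} (for $\eta=0$) along the Hopf algebra automorphism of $D_4$ interchanging $a\leftrightarrow d$ and $b\leftrightarrow c$: one checks this is indeed a Hopf automorphism, it fixes each $V(r)$ and sends $M_1(r,0)$ to $M_1(r,\infty)$, hence, since twisting is an autoequivalence of ${\rm mod}\,D_4$ preserving $(s,s)$-type and the unique $(1,1)$-submodule, it sends $M_s(r,0)$ to $M_s(r,\infty)$; transporting the $\eta=0$ basis of Lemma~\ref{2.28} and rescaling $v_{j,i}\mapsto(-1)^{i-1}v_{j,i}$ yields the basis claimed here.
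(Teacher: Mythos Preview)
Your main inductive argument is exactly what the paper has in mind: the paper's entire proof of Lemma~\ref{2.29} is the phrase ``It is similar to Lemma~\ref{2.28}'', and your write-up is a correct execution of that, with the roles of $a$ and $d$ appropriately interchanged and the key nonvanishing $\beta_{s-1}\neq 0$ argued via indecomposability of $M/L\cong M_2(r,\infty)$ just as in Lemma~\ref{2.28}.

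Your alternative route via the Hopf automorphism $a\leftrightarrow d$, $b\leftrightarrow c$ is not in the paper but is correct and arguably cleaner: the map is indeed a Hopf automorphism of $D_4$, it fixes each $V(r)$ and sends $M_1(r,0)$ to $M_1(r,\infty)$, so by the uniqueness of the $(1,1)$-type submodule it carries $M_s(r,0)$ to $M_s(r,\infty)$; transporting the $\eta=0$ basis of Lemma~\ref{2.28} and applying the sign rescaling $v_{j,i}\mapsto(-1)^{i-1}v_{j,i}$ gives precisely the basis claimed. This buys you Lemma~\ref{2.29} essentially for free from Lemma~\ref{2.28}, at the cost of verifying that the swap respects the comultiplication and antipode (which it does).
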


\begin{proof}
It is similar to Lemma \ref{2.28}.
\end{proof}

\begin{lemma}\label{2.30}
Let $\eta\in{\mathbb P}^1(k)$ and $r, r'\in{\mathbb Z}_2$.
Then $M_1(r,\eta)\ot M_1(r', \eta)\cong M_1(0, \eta)\oplus M_1(1, \eta)$.
\end{lemma}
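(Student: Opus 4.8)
The plan is to reduce at once to the case $r=r'=0$ and then argue by an explicit computation inside the $4$-dimensional module $M_1(0,\eta)\ot M_1(0,\eta)$. By Proposition~\ref{2.1} we have $M_1(r,\eta)\cong V(r)\ot M_1(0,\eta)$ and $V(r)\ot V(r')\cong V(r+r')$, so using the commutativity and associativity of $\ot$ one gets $M_1(r,\eta)\ot M_1(r',\eta)\cong V(r+r')\ot\big(M_1(0,\eta)\ot M_1(0,\eta)\big)$. Hence it suffices to prove $M_1(0,\eta)\ot M_1(0,\eta)\cong M_1(0,\eta)\oplus M_1(1,\eta)$: applying $V(r+r')\ot$ and Proposition~\ref{2.1} once more yields $M_1(r+r',\eta)\oplus M_1(r+r'+1,\eta)$, and since $\{r+r',r+r'+1\}=\{0,1\}$ in $\mathbb Z_2$, this is exactly $M_1(0,\eta)\oplus M_1(1,\eta)$.

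To decompose $M_1(0,\eta)\ot M_1(0,\eta)$ I would first treat $\eta\in k$. Take the standard basis $\{v_1,v_2\}$ of $M_1(0,\eta)$ from Section~1 and compute the $D_4$-action on $\{v_i\ot v_j\mid 1\<i,j\<2\}$ by means of the coproducts $\t(a)=a\ot b+1\ot a$, $\t(b)=b\ot b$, $\t(c)=c\ot c$, $\t(d)=d\ot c+1\ot d$. Since $b$ and $c$ act diagonally, $v_1\ot v_1$ and $v_2\ot v_2$ span the eigenspace on which $b=c=1$, while $v_1\ot v_2$ and $v_2\ot v_1$ span the one on which $b=c=-1$. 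A short calculation then gives $a\cdot(v_1\ot v_1)=v_1\ot v_2-v_2\ot v_1$ and $d\cdot(v_1\ot v_1)=-\eta(v_1\ot v_2-v_2\ot v_1)$, while $v_1\ot v_2-v_2\ot v_1$ and $v_2\ot v_2$ are annihilated by $a$ and $d$; similarly $a\cdot(v_1\ot v_2)=v_2\ot v_2$ and $d\cdot(v_1\ot v_2)=-\eta v_2\ot v_2$. Reading off the signs, the cyclic submodule $N_1$ generated by $v_1\ot v_1$ has basis $\{v_1\ot v_1,\ v_1\ot v_2-v_2\ot v_1\}$ and is isomorphic to $M_1(1,\eta)$, while the cyclic submodule $N_2$ generated by $v_1\ot v_2$ has basis $\{v_1\ot v_2,\ v_2\ot v_2\}$ and is isomorphic to $M_1(0,\eta)$. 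Because $v_1\ot v_2-v_2\ot v_1$ and $v_1\ot v_2$ are linearly independent and $v_1\ot v_1$, $v_2\ot v_2$ lie in different $b$-eigenspaces, $N_1\cap N_2=0$; comparing dimensions gives $M_1(0,\eta)\ot M_1(0,\eta)=N_1\oplus N_2\cong M_1(0,\eta)\oplus M_1(1,\eta)$.

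For $\eta=\infty$ the same scheme applies, using the basis of $M_1(0,\infty)$ from Section~1: now $a$ acts as zero on the whole tensor product and $d$ plays the role that $a$ played above, so that the submodule generated by $v_1\ot v_1$ is again isomorphic to $M_1(1,\infty)$ and the one generated by $v_1\ot v_2$ to $M_1(0,\infty)$, and they meet only in $0$. This finishes the reduced case and hence, by the first paragraph, the lemma.

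I do not expect a genuine obstacle here; the argument is a finite-dimensional bookkeeping computation. The only point requiring care is the sign convention, that is, correctly identifying the parameter $r\in\mathbb Z_2$ of each of the two summands from the way $b$ (equivalently $c$) acts on its generator and on its socle. One could alternatively observe that $\mathrm{soc}(M_1(0,\eta)\ot M_1(0,\eta))\cong V(0)\oplus V(1)$, which already rules out the module being indecomposable, since an indecomposable module of $(2,2)$-type has socle $2V(r)$ by Section~1; but determining the parameter $\eta$ of each summand would still require the explicit submodules above, so the direct computation is the most economical route.
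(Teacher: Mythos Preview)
Your proof is correct and follows essentially the same approach as the paper: after the reduction to $r=r'=0$ via Proposition~\ref{2.1}, the paper exhibits exactly the same two submodules $U={\rm span}\{v_1\ot v_1,\ v_1\ot v_2-v_2\ot v_1\}\cong M_1(1,\eta)$ and $W={\rm span}\{v_1\ot v_2,\ v_2\ot v_2\}\cong M_1(0,\eta)$ and concludes by the direct sum decomposition. Your write-up in fact supplies more of the verifications than the paper does.
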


\begin{proof}
By Proposition \ref{2.1}, it is enough to show that
$M_1(0,\eta)\ot M_1(0, \eta)\cong M_1(0, \eta)\oplus M_1(1, \eta)$
for all $\eta\in{\mathbb P}^1(k)$.

Assume $\eta\in k$. Let $\{v_1, v_2\}$ be the standard basis of $M_1(0, \eta)$ as stated in Section 1
(or in Lemma \ref{2.28}).
Let $u_1=v_1\ot v_1$, $u_2=v_1\ot v_2-v_2\ot v_1$, $w_1=v_1\ot v_2$ and $w_2=v_2\ot v_2$ in
$M_1(0,\eta)\ot M_1(0, \eta)$. Then $\{u_1, u_2, w_1, w_2\}$ is a basis of
$M_1(0,\eta)\ot M_1(0, \eta)$. Putting $U={\rm span}\{u_1, u_2\}$ and $W={\rm span}\{w_1, w_2\}$.
Then by a straightforward verification, one can show that both $U$ and $W$ are submodules of
$M_1(0,\eta)\ot M_1(0, \eta)$, and that $U\cong M_1(1, \eta)$ and $W\cong M_1(0, \eta)$.
It follows that $M_1(0,\eta)\ot M_1(0, \eta)=U\oplus W\cong M_1(1,\eta)\oplus M_1(0, \eta)$.

Similarly, one can show that $M_1(0,\infty)\ot M_1(0, \infty)\cong M_1(1,\infty)\oplus M_1(0, \infty)$.
\end{proof}

\begin{lemma}\label{2.31}
Let $s\>1$, $\eta\in{\mathbb P}^1(k)$ and $r\in{\mathbb Z}_2$.
Then $M_s(r,\eta)\ot M_s(r, \eta)$ contains a submodule isomorphic to $M_s(1,\eta)$.
\end{lemma}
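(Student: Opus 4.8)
The plan is first to reduce to the case $r=0$ and then to exhibit the required submodule explicitly, using the bases of Lemma \ref{2.28} and Lemma \ref{2.29}. By Propositions \ref{2.1} and \ref{2.4}, together with the commutativity of $\ot$ over the quasitriangular Hopf algebra $D_4$, one has
$$M_s(r,\eta)\ot M_s(r,\eta)\cong V(r)\ot V(r)\ot M_s(0,\eta)\ot M_s(0,\eta)\cong M_s(0,\eta)\ot M_s(0,\eta)$$
as $D_4$-modules, so it suffices to produce a submodule of $M_s(0,\eta)\ot M_s(0,\eta)$ isomorphic to $M_s(1,\eta)$.

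Suppose first $\eta\in k$. Let $\{v_{1,i},v_{2,i}\}_{i=1}^{s}$ and $\{v'_{1,i},v'_{2,i}\}_{i=1}^{s}$ be the bases of the two tensor factors $M_s(0,\eta)$ provided by Lemma \ref{2.28} with $r=0$, and set $v_{2,0}=v'_{2,0}=0$. For $1\<k\<s$ put
$$w_{1,k}=\sum_{i+j=k+1}v_{1,i}\ot v'_{1,j},\qquad w_{2,k}=a\cdot w_{1,k},$$
where the sums run over $1\<i,j\<s$, and set $w_{2,0}=0$. Using $\t(a)=a\ot b+1\ot a$, $\t(d)=d\ot c+1\ot d$ and the action formulas of Lemma \ref{2.28}, one computes directly that $a\cdot(v_{1,i}\ot v'_{1,j})=-v_{2,i}\ot v'_{1,j}+v_{1,i}\ot v'_{2,j}$ and that, after the $\eta$-terms cancel, $(d+\eta a)\cdot(v_{1,i}\ot v'_{1,j})=v_{2,i-1}\ot v'_{1,j}-v_{1,i}\ot v'_{2,j-1}$. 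Summing over $i+j=k+1$, the first formula gives $w_{2,k}=-\sum_{i+j=k+1}v_{2,i}\ot v'_{1,j}+\sum_{i+j=k+1}v_{1,i}\ot v'_{2,j}$, while the second, after shifting the summation indices, becomes $(d+\eta a)\cdot w_{1,k}=-a\cdot w_{1,k-1}=-w_{2,k-1}$; hence $d\cdot w_{1,k}=-w_{2,k-1}-\eta w_{2,k}$. Since $a^2=0$ in $D_4$ we get $a\cdot w_{2,k}=a^2\cdot w_{1,k}=0$, and since $w_{1,k}$ lies in the $(+1)$-eigenspace of both $b$ and $c$, the relation $da=1-bc-ad$ yields $d\cdot w_{2,k}=(da)\cdot w_{1,k}=-a\cdot(d\cdot w_{1,k})=a\cdot w_{2,k-1}+\eta\,a\cdot w_{2,k}=0$. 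Finally $b$ and $c$ act by $+1$ on every $w_{1,k}$, and by $ba=-ab$, $ca=-ac$ they act by $-1$ on every $w_{2,k}$.

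These are exactly the structure constants of $M_s(1,\eta)$ in the basis of Lemma \ref{2.28} with $r=1$. Thus, once we check that $w_{1,1},\dots,w_{1,s},w_{2,1},\dots,w_{2,s}$ are linearly independent, their $k$-span is a $D_4$-submodule of $M_s(0,\eta)\ot M_s(0,\eta)$ isomorphic to $M_s(1,\eta)$, as desired. Linear independence is clear: the $w_{1,k}$ lie in the $(+1)$-eigenspace spanned by the $v_{1,i}\ot v'_{1,j}$ and the $w_{2,k}$ in the complementary $(-1)$-eigenspace; inside each eigenspace the vectors belonging to distinct $k$ are supported on pairwise disjoint sets of basis elements (those indexed by $i+j=k+1$), and each $w_{1,k}$, as well as each $w_{2,k}$ (which contains the term $-v_{2,1}\ot v'_{1,k}$), is a nonzero combination of such basis elements. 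The case $\eta=\oo$ is entirely parallel: using the basis of Lemma \ref{2.29} one keeps $w_{1,k}=\sum_{i+j=k+1}v_{1,i}\ot v'_{1,j}$ but now sets $w_{2,k}=d\cdot w_{1,k}$, and repeats the argument with the roles of $a$ and $d$ interchanged (so $a\cdot w_{1,k}=w_{2,k-1}$, $d\cdot w_{1,k}=w_{2,k}$, and one uses $ad=1-bc-da$ and $d^2=0$ in place of $da=1-bc-ad$ and $a^2=0$), obtaining a submodule with the structure constants of $M_s(1,\oo)$.

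The only step that requires a genuine computation is the identity $(d+\eta a)\cdot(v_{1,i}\ot v'_{1,j})=v_{2,i-1}\ot v'_{1,j}-v_{1,i}\ot v'_{2,j-1}$; this is precisely where the choice of $w_{1,k}$ as the ``diagonal'' sum $\sum_{i+j=k+1}v_{1,i}\ot v'_{1,j}$ is needed, forcing the $-\eta v_{2,i}\ot v'_{1,j}$ contributions to cancel and the surviving terms to combine into $-w_{2,k-1}$. Everything else is routine bookkeeping with the defining relations $a^2=0$ (resp.\ $d^2=0$), $da+ad=1-bc$, $ba=-ab$ and $ca=-ac$ of $D_4$.
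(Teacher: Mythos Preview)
Your proof is correct and follows essentially the same approach as the paper: both reduce to $r=0$ and exhibit the explicit ``diagonal'' submodule spanned by $w_{1,k}=\sum_{i+j=k+1}v_{1,i}\ot v'_{1,j}$ together with $w_{2,k}$, which in both treatments equals $\sum_{i+j=k+1}(v_{1,i}\ot v'_{2,j}-v_{2,i}\ot v'_{1,j})$. Your organization is slightly slicker---defining $w_{2,k}$ as $a\cdot w_{1,k}$ (resp.\ $d\cdot w_{1,k}$) and invoking $a^2=0$, $d^2=0$ and $da+ad=1-bc$ to get the vanishing of $a\cdot w_{2,k}$ and $d\cdot w_{2,k}$ without termwise computation---but the submodule and the verification are the same.
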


\begin{proof}
It follows from Lemma \ref{2.30} that $M_1(r,\eta)\ot M_1(r, \eta)$ contains a submodule isomorphic to $M_1(1,\eta)$.
Now let $s\>2$. By Propositions \ref{2.1} and \ref{2.4}, we have
$M_s(r,\eta)\ot M_s(r, \eta)\cong V(r)\ot M_s(0, \eta)\ot V(r)\ot M_s(0, \eta)\cong M_s(0,\eta)\ot M_s(0, \eta)$.
Hence we only need to show that $M_s(0,\eta)\ot M_s(0, \eta)$ contains a submodule isomorphic to $M_s(1,\eta)$.

Let $\{v_{1,1}, v_{1,2}, \cdots, v_{1,s}, v_{2,1}, v_{2,2}, \cdots, v_{2, s}\}$ be the basis of $M_s(0,\infty)$
as stated in Lemma \ref{2.29}. Then $\{v_{i, j}\ot v_{m,n}|1\<i,m\<2, 1\<j, n\<s\}$ is a basis of
$M_s(0,\infty)\ot M_s(0,\infty)$. For any $1\<i\<s$, let $u_{1,i}=\sum_{j=1}^iv_{1,j}\ot v_{1, i+1-j}$
and $u_{2,i}=\sum_{j=1}^i(v_{1,j}\ot v_{2, i+1-j}-v_{2,j}\ot v_{1,i+1-j})$. Then obviously,
$\{u_{1,i}, u_{2,i}|1\<i\<s\}$ is a linearly independent subset of $M_s(0,\infty)\ot M_s(0,\infty)$,
and $b\cdot u_{1,i}=c\cdot u_{1,i}=u_{1,i}$ and $b\cdot u_{2,i}=c\cdot u_{2,i}=-u_{2,i}$ for all $1\<i\<s$.
Now we have $a\cdot u_{1,1}=a\cdot(v_{1,1}\ot v_{1,1})=0$ and for $2\<i\<s$
$$\begin{array}{rcl}
a\cdot u_{1,i}&=&\sum\limits_{j=1}^ia\cdot(v_{1,j}\ot v_{1, i+1-j})\\
&=&\sum\limits_{j=1}^i(v_{1,j}\ot a\cdot v_{1, i+1-j}+a\cdot v_{1,j}\ot b\cdot v_{1, i+1-j})\\
&=&\sum\limits_{j=1}^{i-1}v_{1,j}\ot v_{2, i-j}-\sum\limits_{j=2}^iv_{2,j-1}\ot v_{1, i+1-j}\\
&=&\sum\limits_{j=1}^{i-1}(v_{1,j}\ot v_{2, i-j}-v_{2,j}\ot v_{1, i-j})=u_{2,i-1}.\\
\end{array}$$
Furthermore, for any $1\<i\<s$, we have
$$\begin{array}{rcl}
d\cdot u_{1,i}&=&\sum\limits_{j=1}^id\cdot(v_{1,j}\ot v_{1, i+1-j})\\
&=&\sum\limits_{j=1}^i(v_{1,j}\ot d\cdot v_{1, i+1-j}+d\cdot v_{1,j}\ot c\cdot v_{1, i+1-j})\\
&=&\sum\limits_{j=1}^i(v_{1,j}\ot v_{2, i+1-j}-v_{2,j}\ot v_{1, i+1-j})\\
&=&u_{2,i},\\
\end{array}$$
$$\begin{array}{rcl}
a\cdot u_{2,i}&=&\sum\limits_{j=1}^ia\cdot(v_{1,j}\ot v_{2, i+1-j}-v_{2,j}\ot v_{1,i+1-j})\\
&=&\sum\limits_{j=1}^i(a\cdot v_{1,j}\ot b\cdot v_{2, i+1-j}-v_{2,j}\ot a\cdot v_{1,i+1-j})\\
&=&\sum\limits_{1<j\<i} v_{2,j-1}\ot v_{2, i+1-j}-\sum\limits_{1\<j<i}v_{2,j}\ot v_{2,i-j}\\
&=&0,\\
\end{array}$$
and similarly $d\cdot u_{2,i}=0$.
Therefore, ${\rm span}\{u_{1,i}, u_{2,i}|1\<i\<s\}$ is a submodule of $M_s(0,\infty)\ot M_s(0,\infty)$.
It follows from Lemma \ref{2.29} that ${\rm span}\{u_{1,i}, u_{2,i}|1\<i\<s\}$ is isomorphic to $M_s(1,\infty)$.

Now let $\eta\in k$ and let $\{v_{1,1}, v_{1,2}, \cdots, v_{1,s}, v_{2,1}, v_{2,2}, \cdots, v_{2, s}\}$
be the basis of $M_s(0,\eta)$ as stated in Lemma \ref{2.28}. Then $\{v_{i, j}\ot v_{m,n}|1\<i,m\<2, 1\<j, n\<s\}$
is a basis of $M_s(0,\eta)\ot M_s(0,\eta)$. For any $1\<i\<s$, let $u_{1,i}=\sum_{j=1}^iv_{1,j}\ot v_{1, i+1-j}$
and $u_{2,i}=\sum_{j=1}^i(v_{1,j}\ot v_{2, i+1-j}-v_{2,j}\ot v_{1,i+1-j})$. Then by a similar argument as above,
one can show that ${\rm span}\{u_{1,i}, u_{2,i}|1\<i\<s\}$ is a submodule of $M_s(0,\eta)\ot M_s(0,\eta)$,
and is isomorphic to $M_s(1,\eta)$ by Lemma \ref{2.28}.
\end{proof}

\begin{proposition}\label{2.32}
Let $t\>s\>1$, $r, r'\in{\mathbb Z}_2$ and $\eta\in{\mathbb P}^1(k)$. Then
$$M_s(r,\eta)\ot M_t(r', \eta)\cong s(t-1)P(r+r')\oplus M_s(0, \eta)\oplus M_s(1, \eta).$$
\end{proposition}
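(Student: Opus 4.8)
The plan is to reduce to the case $r=r'=0$, argue by induction on $s$, and in the inductive step tensor a short exact sequence from Section~1 with $M_t(0,\eta)$, split off the projective summands, and identify the surviving module of dimension $4s$; this last step is the real content. By Propositions~\ref{2.1} and~\ref{2.4} one has $M_s(r,\eta)\ot M_t(r',\eta)\cong V(r+r')\ot(M_s(0,\eta)\ot M_t(0,\eta))$, and $V(r+r')\ot-$ carries $s(t-1)P(0)\oplus M_s(0,\eta)\oplus M_s(1,\eta)$ to $s(t-1)P(r+r')\oplus M_s(r+r'+1,\eta)\oplus M_s(r+r',\eta)$, which is the asserted right-hand side; so it suffices to prove $M_s(0,\eta)\ot M_t(0,\eta)\cong s(t-1)P(0)\oplus M_s(0,\eta)\oplus M_s(1,\eta)$ for all $t\>s\>1$. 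I would induct on $s$. For $s=1$ I run a secondary induction on $t$: the case $t=1$ is Lemma~\ref{2.30}, and for $t>1$ I tensor the exact sequence $0\ra M_{t-1}(0,\eta)\ra M_t(0,\eta)\ra M_1(0,\eta)\ra 0$ with $M_1(0,\eta)$, applying Lemma~\ref{2.30} to the quotient term and the secondary induction to the submodule term.

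For the inductive step ($s\>2$, $t\>s$) I tensor the sequence $0\ra M_1(0,\eta)\ra M_s(0,\eta)\ra M_{s-1}(0,\eta)\ra 0$ with $M_t(0,\eta)$. The submodule term becomes $M_1(0,\eta)\ot M_t(0,\eta)\cong(t-1)P(0)\oplus M_1(0,\eta)\oplus M_1(1,\eta)$ by the $s=1$ case, and the quotient term becomes $M_{s-1}(0,\eta)\ot M_t(0,\eta)\cong(s-1)(t-1)P(0)\oplus M_{s-1}(0,\eta)\oplus M_{s-1}(1,\eta)$ by the induction hypothesis, which applies since $t\>s-1$. As $P(0)$ is projective and injective, the copies of $P(0)$ in the quotient term lift to a direct summand and those in the submodule term split off, so $M_s(0,\eta)\ot M_t(0,\eta)\cong s(t-1)P(0)\oplus W$ where ${\rm dim}\,W=4s$ and $W$ fits into an exact sequence $0\ra M_1(0,\eta)\oplus M_1(1,\eta)\ra W\ra M_{s-1}(0,\eta)\oplus M_{s-1}(1,\eta)\ra 0$. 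One is left to prove $W\cong M_s(0,\eta)\oplus M_s(1,\eta)$; in the $s=1$ case the corresponding claim is $W\cong P(0)\oplus M_1(0,\eta)\oplus M_1(1,\eta)$.

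Identifying $W$ is the step I expect to be hardest, and I would attack it with three ingredients. First, $M_s(0,\eta)\ot M_t(0,\eta)$ is self-dual: by Lemmas~\ref{2.15}, \ref{2.16} and~\ref{2.23}, $(M_s(0,\eta)\ot M_t(0,\eta))^*\cong M_s(1,\eta)\ot M_t(1,\eta)\cong M_s(0,\eta)\ot M_t(0,\eta)$, and $P(0)^*\cong P(0)$, so $W$ is self-dual. Second, since $t\>s$, $M_s(0,\eta)$ embeds in $M_t(0,\eta)$ as the submodule of $(s,s)$-type, hence $M_s(0,\eta)\ot M_s(0,\eta)$ embeds in $M_s(0,\eta)\ot M_t(0,\eta)$, and Lemma~\ref{2.31} then produces inside it a submodule isomorphic to $M_s(1,\eta)$ --- with the \emph{same} parameter $\eta$; since its socle is $\cong sV(1)$ it misses the socle of the $P(0)$-part, so it lies in $W$, and dually $M_s(0,\eta)$ is a quotient of $W$. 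Third, the top of $M_s(0,\eta)\ot M_t(0,\eta)$ is computed by adjunction, ${\rm Hom}(M_s(0,\eta)\ot M_t(0,\eta),V(r))\cong{\rm Hom}(M_s(0,\eta),V(r)\ot M_t(0,\eta)^*)\cong{\rm Hom}(M_s(0,\eta),M_t(r+1,\eta))$, whose right-hand side has dimension $st$ for $r=0$ and $s$ for $r=1$ by a direct calculation in the bases of Lemmas~\ref{2.28} and~\ref{2.29}; hence the top of $W$ is $sV(0)\oplus sV(1)$, and since the $P(r)$ are the only indecomposables of Loewy length $3$, this forbids any projective summand of $W$, so every indecomposable summand of $W$ has Loewy length $2$ and isotypic socle. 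Combining this with the classification of such indecomposables (uniqueness of submodules of $(i,i)$-type, \cite[Theorem 3.10]{Ch4}) and with Lemmas~\ref{2.26} and~\ref{2.26-2} should force $W\cong M_s(0,\eta)\oplus M_s(1,\eta)$.

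The delicacy is confined to this last identification: the dimension vector and the class of $M_s(0,\eta)\ot M_t(0,\eta)$ in $G_0(D_4)$ are symmetric in $V(0)$ and $V(1)$ and do not distinguish $M_s(0,\eta)\oplus M_s(1,\eta)$ from, say, $M_s(0,\eta)\oplus M_s(1,\eta')$ with $\eta'\neq\eta$, nor from a decomposition into smaller $M_k(\cdot,\eta)$'s together with stray copies of $P(0)$ and $P(1)$. It is precisely the exact value of the top (ruling out extra projectives, and in particular all $P(1)$), the $M_s(1,\eta)$-submodule of the correct $\eta$ supplied by Lemma~\ref{2.31}, and the isotypic-socle structure of the Loewy-length-$2$ indecomposables that pin down the answer. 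A purely computational alternative for this step, in the spirit of the proof of Lemma~\ref{2.31}, is to write down explicit elements of $M_s(0,\eta)\ot M_t(0,\eta)$ spanning a copy of $M_s(0,\eta)$, a copy of $M_s(1,\eta)$, and $s(t-1)$ copies of $P(0)$, and to verify the $D_4$-action directly against Lemmas~\ref{2.28} and~\ref{2.29}.
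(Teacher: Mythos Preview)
Your route is genuinely different from the paper's. The paper does not induct on $s$; instead it uses Lemma~\ref{2.27} to embed $M_t(0,\eta)$ into $\O^tV$ and tensors the resulting short exact sequence $0\ra M_t(0,\eta)\ra\O^tV\ra V(1)\ra 0$ with $M_s(0,\eta)$. Since $M_s(0,\eta)\ot\O^tV\cong stP(0)\oplus M_s(1,\eta)$ is already known (Proposition~\ref{2.21}), and since the copy of $M_s(1,\eta)$ inside $M_s(0,\eta)\ot M_t(0,\eta)$ supplied by Lemma~\ref{2.31} has socle $sV(1)$ while $stP(0)$ has socle $stV(0)$, a one-line socle argument shows this $M_s(1,\eta)$ complements the $stP(0)$ inside $M_s(0,\eta)\ot\O^tV$. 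The proposition then drops out from a kernel computation. So the paper trades the hard work for Lemma~\ref{2.27}, and the proof of Proposition~\ref{2.32} itself is short.

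Your approach avoids Lemma~\ref{2.27}, which is attractive, but the price is the identification of $W$, and here your sketch has a real gap. The assertion that the top computation ``forbids any projective summand of $W$'' is not justified: from ${\rm dim}\,W=4s$ and $l({\rm top}(W))=l({\rm soc}(W))=2s$ one only gets, summand by summand, that the number of simple summands equals twice the number of $P(r)$-summands, which does not by itself exclude either. What actually works is to note that \emph{every} indecomposable $D_4$-module (with no $V(2,r)$ factors) has isotypic socle, split $W=W_{[0]}\oplus W_{[1]}$ according to socle type, and observe that your submodule $N\cong M_s(1,\eta)$ satisfies $N\cap W_{[0]}=0$ because ${\rm soc}(N)\cong sV(1)$ meets ${\rm soc}(W_{[0]})\cong sV(0)$ trivially; hence $N$ embeds in $W_{[1]}$ and $l(W_{[1]})\geq 2s$. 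To finish you still need $l(W_{[1]})\leq 2s$, and self-duality alone does not give this (duality fixes $V(1)$ and $P(1)$, so it does not swap $W_{[0]}$ and $W_{[1]}$). One clean way is to run the same argument on the $M_s(0,\eta)$-quotient of $W$, but that requires a submodule $\cong M_s(0,\eta)$ in $M_s(0,\eta)\ot M_t(0,\eta)$, which Lemma~\ref{2.31} does not provide. Your explicit-computation fallback would of course settle it, but then you are essentially writing a second proof of Lemma~\ref{2.31}; at that point the paper's route via Lemma~\ref{2.27} is shorter.
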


\begin{proof}
By Propositions \ref{2.1} and \ref{2.4}, it is enough to show the proposition for $r=r'=0$.
We only consider the case that $t$ is odd since the proof is similar for the other case.

Assume that $t$ is odd. Then by Lemma \ref{2.27}, there is an exact sequence
$$0\ra M_t(0, \eta)\ra \O^tV(0)\ra V(1)\ra 0.$$
Applying $M_s(0, \eta)\ot$ to the above sequence, one gets the following exact sequence
$$0\ra M_s(0, \eta)\ot M_t(0, \eta)\xrightarrow{\s} M_s(0, \eta)\ot\O^tV(0)\ra M_s(0, \eta)\ot V(1)\ra 0.$$
By \cite[Theorem 3.10(2)]{Ch4}, $M_t(0, \eta)$ contains a unique submodule $M$ of $(s, s)$-type, and
$M\cong M_s(0, \eta)$. From Lemma \ref{2.31}, one knows that $M_s(0, \eta)\ot M\cong M_s(0, \eta)\ot M_s(0, \eta)$
contains a submodule isomorphic to $M_s(1,\eta)$. It follows that $M_s(0, \eta)\ot M_t(0, \eta)$ contains a submodule $N$
such that $N\cong M_s(1, \eta)$. From Proposition \ref{2.21},
$M_s(0, \eta)\ot\O^tV(0)$ contains submodules $P$ and $M'$ with $P\cong stP(0)$ and $M'\cong M_s(1, \eta)$
such that $M_s(0, \eta)\ot\O^tV(0)=P\oplus M'$. Since $\s$ is a monomorphism, $\s(N)\cong N\cong M_s(1, \eta)$,
and hence ${\rm soc}(\s(N))\cong sV(1)$. However, ${\rm soc}(P)\cong stV(0)$ since ${\rm soc}(P(0))\cong V(0)$.
It follows that the sum $P+\s(N)$ is direct, and so $M_s(0, \eta)\ot\O^tV(0)=P\oplus M'=P\oplus \s(N)$ by
comparing their lengths. By Proposition \ref{2.4}, we have $M_s(0, \eta)\ot V(1)\cong M_s(1, \eta)$. Hence
we have the following exact sequence
$$0\ra M_s(0, \eta)\ot M_t(0, \eta)\xrightarrow{\s} P\oplus \s(N)\xrightarrow{f}M_s(1, \eta)\ra 0.$$
Since $f$ is an epimorphism and $f(\s(N))=0$, $f|_P: P\ra M_s(1, \eta)$ is an epimorphism.
It follows that $M_s(0, \eta)\ot M_t(0, \eta)\cong {\rm Ker}(f)={\rm Ker}(f|_P)\oplus\s(N)\cong
s(t-1)P(0)\oplus \O M_s(1, \eta)\oplus M_s(1, \eta)\cong
s(t-1)P(0)\oplus M_s(0, \eta)\oplus M_s(1, \eta)$.
\end{proof}

\section{\bf Generators and relations for the Green ring of $D_4$}

In this section, we will consider the Green ring $r(D_4)$ of $D_4$.
At first, $r(D_4)$ is a commutative ring. Moreover,
the duality $(-)^*$ of mod$D_4$ induces a ring involution of $r(D_4)$
determined by $[M]^*=[M^*]$ for any $M\in{\rm mod}D_4$, as stated in Section 1. That is,
$r(D_4)\ra r(D_4)$, $x\mapsto x^*$ is a ring automorphism of $r(D_4)$ and
$x^{**}=x$ for all $x\in r(D_4)$.

Let $g=[V(1)]$, $x=[V(2,0)]$, $y=[\O V(0)]$ and $z=[\O^{-1}V(0)]$ in $r(D_4)$.
Then we have the following lemma.

\begin{lemma}\label{3.1}
The following relations are satisfied in $r(D_4)$:\\
$(1)$ $g^2=1$;\\
$(2)$ $x^2=[P(1)]$ and $x^3=2x+2gx$;\\
$(3)$ $xy=xz=x+2gx=x(1+2g)$;\\
$(4)$ $yz=1+2x^2$.
\end{lemma}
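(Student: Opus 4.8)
The plan is to verify each of the four relations by translating them into statements about tensor product decompositions, most of which have already been established in Section~2. Throughout I use that $r(D_4)$ has $\mathbb{Z}$-basis given by the classes of indecomposable modules, so two elements are equal precisely when the corresponding modules have isomorphic indecomposable summands with the same multiplicities.

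\textbf{Relations (1) and the first half of (2).} Relation $g^2=1$ is immediate from Proposition~\ref{2.1}, which gives $V(1)\ot V(1)\cong V(0)$, and $[V(0)]=1$ is the identity of $r(D_4)$. Similarly $x^2=[V(2,0)][V(2,0)]=[V(2,0)\ot V(2,0)]=[P(1)]$ by Proposition~\ref{2.5} with $r=r'=0$.

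\textbf{Relation (3).} Here I compute $xy=[V(2,0)\ot\O V(0)]$ and $xz=[V(2,0)\ot\O^{-1}V(0)]$. Both are instances of Corollary~\ref{2.7}: taking $s=1$ (odd), $r=r'=0$ gives $V(2,0)\ot\O V(0)\cong V(2,0)\ot\O^{-1}V(0)\cong V(2,0)\oplus 2V(2,1)$. Hence $xy=xz=[V(2,0)]+2[V(2,1)]$. It remains to identify $[V(2,1)]$ with $gx$: by Proposition~\ref{2.1}, $V(1)\ot V(2,0)\cong V(2,1)$, so $gx=[V(2,1)]$, giving $xy=xz=x+2gx=x(1+2g)$.

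\textbf{Second half of (2) and relation (4).} For $x^3$ I multiply the identity $x^2=[P(1)]$ by $x$, so $x^3=[V(2,0)\ot P(1)]$; by Corollary~\ref{2.8} (with $r'=0$, $r=1$) this is $2V(2,0)\oplus 2V(2,1)$, i.e.\ $x^3=2x+2gx=2x(1+g)$. For relation (4), $yz=[\O V(0)\ot\O^{-1}V(0)]$, which is the case $s=t=1$ of Proposition~\ref{2.20}: here $s+t=2$ is even and $s=t$, so one is in the boundary of cases (1) and (3), giving $\O V(0)\ot\O^{-1}V(0)\cong \O^0V(0)\oplus 2P(1)=V(0)\oplus 2P(1)$. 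Translating, $yz=1+2[P(1)]=1+2x^2$, using the already-established $x^2=[P(1)]$.

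\textbf{Main obstacle.} None of the steps is genuinely hard since the tensor product decompositions are all available from Section~2; the only mild subtlety is making sure the correct parity/ordering case of Corollaries~\ref{2.7},~\ref{2.8} and Proposition~\ref{2.20} is invoked with the right values of $r$, $r'$, $s$, $t$, and that $\O^0V(0)$ is read as $V(0)$ per the convention fixed before Lemma~\ref{2.12}. One should also double-check that $s=t$ is permitted in Proposition~\ref{2.20} (it is, via the even/$s\geq t$ branch), so that relation (4) follows cleanly.
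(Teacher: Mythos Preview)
Your proof is correct and follows essentially the same route as the paper: each relation is verified by invoking the appropriate tensor-product decomposition from Section~2 (Propositions~\ref{2.1}, \ref{2.5}, Corollaries~\ref{2.7}, \ref{2.8}), with the only cosmetic difference that for part~(4) you cite Proposition~\ref{2.20} with $s=t=1$ whereas the paper cites the equivalent special case Corollary~\ref{2.19}(1).
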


\begin{proof}
Part (1) follows from Proposition \ref{2.1} since $[V(0)]=1$ in $r(D_4)$.
By Proposition \ref{2.1}, $gx=[V(2,1)]$. Part (2) follows from Proposition \ref{2.5} and Corollary \ref{2.8}.
Part (3) follows from Corollary \ref{2.7}(1). Part (4) follows from Part (2) and Corollary \ref{2.19}(1).
\end{proof}

\begin{lemma}\label{3.2}
For all $n\>1$, define $a_n\in\mathbb Z$ by $a_n=\frac{1}{2}\sum_{i=1}^{n-1}(3^{i-1}+1)(n-i)$
for $n>1$ and $a_1=0$. Then $3a_n-\frac{n(n-1)}{2}=a_{n+1}-n$ for all $n\>1$.
\end{lemma}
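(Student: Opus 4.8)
The statement is purely a numerical identity about the explicitly defined sequence $a_n=\frac{1}{2}\sum_{i=1}^{n-1}(3^{i-1}+1)(n-i)$, so the plan is to prove it by direct manipulation of the defining sum rather than by induction (though induction would also work). First I would split the sum for $a_n$ into two pieces, $a_n=\frac{1}{2}\sum_{i=1}^{n-1}3^{i-1}(n-i)+\frac{1}{2}\sum_{i=1}^{n-1}(n-i)$. The second piece is $\frac{1}{2}\sum_{j=1}^{n-1}j=\frac{n(n-1)}{4}$ by reindexing $j=n-i$. For the first piece, write $b_n:=\sum_{i=1}^{n-1}3^{i-1}(n-i)$; then $a_n=\frac{1}{2}b_n+\frac{n(n-1)}{4}$, and the claimed identity $3a_n-\frac{n(n-1)}{2}=a_{n+1}-n$ becomes, after clearing the $\frac12$'s and substituting, an equivalent identity purely in the $b_n$'s:
\begin{equation*}
\tfrac{3}{2}b_n+\tfrac{3n(n-1)}{4}-\tfrac{n(n-1)}{2}=\tfrac12 b_{n+1}+\tfrac{n(n+1)}{4}-n,
\end{equation*}
which simplifies to $3b_n=b_{n+1}+2n$, i.e. $b_{n+1}=3b_n-2n$.

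So the real content is the recursion $b_{n+1}=3b_n-2n$ for $b_n=\sum_{i=1}^{n-1}3^{i-1}(n-i)$. I would verify this directly: $b_{n+1}=\sum_{i=1}^{n}3^{i-1}(n+1-i)$. Peel off the top term $i=n$, which contributes $3^{n-1}$, and in the remaining sum $\sum_{i=1}^{n-1}3^{i-1}(n+1-i)$ write $n+1-i=(n-i)+1$ to split it as $b_n+\sum_{i=1}^{n-1}3^{i-1}=b_n+\frac{3^{n-1}-1}{2}$. Hence $b_{n+1}=b_n+\frac{3^{n-1}-1}{2}+3^{n-1}=b_n+\frac{3\cdot 3^{n-1}-1}{2}=b_n+\frac{3^{n}-1}{2}$. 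Comparing with the target $b_{n+1}=3b_n-2n$ shows I must independently know $b_n=\frac{3^{n}-1}{4}-\frac{n}{2}$, equivalently $b_n=\frac{3^n-2n-1}{4}$; indeed both expressions satisfy the same first-order recursion and agree at $n=1$ ($b_1=0$), so they coincide for all $n\geq 1$. Plugging $b_n=\frac{3^n-2n-1}{4}$ into $b_{n+1}=b_n+\frac{3^n-1}{2}$ gives $b_{n+1}=\frac{3^n-2n-1}{4}+\frac{2\cdot 3^n-2}{4}=\frac{3^{n+1}-2n-3}{4}=\frac{3^{n+1}-2(n+1)-1}{4}$, confirming the closed form; and $3b_n-2n=\frac{3^{n+1}-6n-3}{4}-2n=\frac{3^{n+1}-2n-3}{4}-2n+2n$... hold on, $\frac{3\cdot(3^n-2n-1)}{4}-2n=\frac{3^{n+1}-6n-3-8n}{4}=\frac{3^{n+1}-14n-3}{4}$, which does not match — so in fact the clean route is simply to establish the closed form $a_n=\frac{3^n-2n-1}{8}+\frac{n(n-1)}{4}$ for all $n\geq 1$ by the recursion just derived, and then verify the stated identity by substituting this closed form on both sides and simplifying.

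Concretely, the cleanest write-up is: (i) prove $a_n=\frac{3^n-2n-1}{8}+\frac{n(n-1)}{4}$ for all $n\geq 1$, either by checking it satisfies $8a_{n+1}=8a_n+(3^n-1)+2n$ (which is the recursion obtained from the $b_n$-recursion above together with $a_n=\frac12 b_n+\frac{n(n-1)}{4}$) with base case $a_1=0$, or by evaluating the geometric-type sum directly; (ii) substitute this formula for $a_n$ and $a_{n+1}$ into $3a_n-\frac{n(n-1)}{2}$ and into $a_{n+1}-n$ and check that both reduce to the same expression, which will be $\frac{3^{n+1}-2n-3}{8}+\frac{n(n-1)}{4}-\frac{n(n-1)}{2}+\cdots$ — a routine polynomial-plus-$3^n$ calculation. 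The only "obstacle" here is bookkeeping: keeping the powers of $3$, the linear-in-$n$ terms, and the quadratic-in-$n$ terms separated and not dropping a factor of $2$. There is no conceptual difficulty; the lemma is a mechanical step being recorded for use in the description of $r(D_4)$.
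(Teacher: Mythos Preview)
Your final plan (derive the closed form $a_n=\tfrac{3^n-2n-1}{8}+\tfrac{n(n-1)}{4}$ and substitute) is correct and would go through. But the confusion in the middle of your write-up comes from an algebraic slip, not from any genuine obstacle: when you reduce the identity to a recursion for $b_n=\sum_{i=1}^{n-1}3^{i-1}(n-i)$, the correct simplification of
\[
\tfrac{3}{2}b_n+\tfrac{3n(n-1)}{4}-\tfrac{n(n-1)}{2}=\tfrac12 b_{n+1}+\tfrac{n(n+1)}{4}-n
\]
is $b_{n+1}=3b_n+n$, not $b_{n+1}=3b_n-2n$. With the right sign your direct computation $b_{n+1}=b_n+\tfrac{3^n-1}{2}$ and the closed form $b_n=\tfrac{3^n-2n-1}{4}$ match perfectly: $3b_n+n=\tfrac{3^{n+1}-6n-3}{4}+n=\tfrac{3^{n+1}-2(n+1)-1}{4}=b_{n+1}$. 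So the detour through the closed form is unnecessary; once the recursion is written correctly the identity is immediate.

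The paper takes a shorter, more self-contained route that avoids splitting $a_n$ at all: multiply the defining sum by $3$ to turn $3^{i-1}$ into $3^i$, write $3^i+3=(3^i+1)+2$, observe that the ``$+2$'' piece contributes exactly $\sum_{i=1}^{n-1}(n-i)=\tfrac{n(n-1)}{2}$ and cancels, and then shift the index $i\mapsto i-1$ in the remaining sum to recognize it as $a_{n+1}$ minus its $i=1$ term (which is $n$). This is a three-line manipulation with no auxiliary sequences or closed forms. Your approach works but is heavier; the paper's index shift is the cleaner trick to keep in your toolbox.
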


\begin{proof}
For $n=1$, $a_2-1=1-1=0=3a_1-\frac{1(1-1)}{2}$. Now let $n>1$. Then
$$\begin{array}{rcl}
3a_n-\frac{n(n-1)}{2}&=&\frac{1}{2}\sum_{i=1}^{n-1}(3^i+3)(n-i)-\frac{n(n-1)}{2}\\
&=&\frac{1}{2}\sum_{i=1}^{n-1}(3^i+1)(n-i)+\sum_{i=1}^{n-1}(n-i)-\frac{n(n-1)}{2}\\
&=&\frac{1}{2}\sum_{i=2}^{n}(3^{i-1}+1)(n-(i-1))\\
&=&\frac{1}{2}\sum_{i=1}^{n}(3^{i-1}+1)(n+1-i)-n\\
&=&a_{n+1}-n.\\
\end{array}$$
\end{proof}

For all $n\>1$, define $f_n\in r(D_4)$ by $f_n=a_n(1+g)-\frac{n(n-1)}{2}g^n$,
where $a_n$ are given as in Lemma \ref{3.2}.

\begin{lemma}\label{3.3}
$[\O^nV(0)]=y^n-f_nx^2$ in $r(D_4)$ for all $n\>1$.
\end{lemma}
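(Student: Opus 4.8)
The plan is to argue by induction on $n$. The base case $n=1$ is immediate: $f_1 = a_1(1+g) - 0\cdot g = 0$, so the claim reads $[\O V(0)] = y$, which is the definition of $y$. For the inductive step, assume $[\O^nV(0)] = y^n - f_n x^2$ holds for some $n\>1$, and compute $[\O^{n+1}V(0)]$ by multiplying the known identity for $[\O^nV(0)]$ by $y = [\O V(0)]$ and using Lemma~\ref{2.13} to re-express the product $[\O^nV(0)]\cdot y$ in terms of $[\O^{n+1}V(0)]$ and a multiple of $[P(\ast)]$.

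More precisely, Lemma~\ref{2.13} gives $\O^nV(0)\ot\O V(0)\cong \O^{n+1}V(0)\oplus nP(\epsilon)$ where $\epsilon = 0$ if $n$ is odd and $\epsilon = 1$ if $n$ is even; in either case, since $[P(0)] = [P(1)] = x^2$ by Lemma~\ref{3.1}(2) (note $x^2 = [P(1)]$ and $gx^2 = [V(1)\ot P(1)] = [P(0)] = x^2$, so in fact $[P(0)]=[P(1)]=x^2$ and $g x^2 = x^2$), we get in $r(D_4)$ the clean relation
\begin{equation*}
y^{n+1} - f_n x^2 y = [\O^nV(0)]\,y = [\O^{n+1}V(0)] + n x^2.
\end{equation*}
Hence $[\O^{n+1}V(0)] = y^{n+1} - f_n x^2 y - n x^2$. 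Now I must identify $f_n x^2 y + n x^2$ with $f_{n+1}x^2$. Using Lemma~\ref{3.1}(3), $x^2 y = x\cdot(xy) = x\cdot x(1+2g) = x^2(1+2g)$, and then $x^2\cdot g^n y = g^n x^2 y = g^n x^2(1+2g)$; combining with $g x^2 = x^2$ one reduces $f_n x^2 y$ to an explicit $\mathbb Z[g]$-combination of $x^2$. The remaining task is the purely arithmetic check that the coefficients match, i.e. that
\begin{equation*}
\bigl(a_n(1+g) - \tfrac{n(n-1)}{2}g^n\bigr)(1+2g) + n = a_{n+1}(1+g) - \tfrac{n(n+1)}{2}g^{n+1}
\end{equation*}
holds in $\mathbb Z[g]/(g^2-1)$; expanding $(1+g)(1+2g) = 3(1+g)$ (using $g^2=1$) and $g^n(1+2g) = 2g^{n+1} + g^n$, this splits into the $(1+g)$-component relation $3a_n - \tfrac{n(n-1)}{2} = a_{n+1} - n$, which is exactly Lemma~\ref{3.2}, together with a straightforward bookkeeping of the $g^n$-versus-$g^{n+1}$ terms that uses $g^{n+2}=g^n$.

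The main obstacle I anticipate is not conceptual but careful: one must handle the parity of $n$ correctly (the projective summand in Lemma~\ref{2.13} is $P(0)$ or $P(1)$ depending on parity), and this is exactly why the identity $[P(0)]=[P(1)]=x^2$ with $g x^2 = x^2$ is needed up front so that the parity becomes invisible in $r(D_4)$. After that, the computation is a routine manipulation in the commutative ring $\mathbb Z[g,x,y]$ modulo the relations of Lemma~\ref{3.1}, with Lemma~\ref{3.2} supplying the one nontrivial recurrence among the integer coefficients $a_n$.
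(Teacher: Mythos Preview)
Your inductive strategy is exactly the paper's, but there is a genuine error that breaks the argument. You assert that $gx^2=[V(1)\ot P(1)]=[P(0)]=x^2$, i.e.\ $[P(0)]=[P(1)]$ in $r(D_4)$. This is false: $P(0)$ and $P(1)$ are non-isomorphic indecomposable modules, hence distinct $\mathbb Z$-basis elements of the Green ring. The correct identities are $x^2=[P(1)]$ and $gx^2=[P(0)]\neq x^2$ (see the proof of Proposition~\ref{3.5} and Corollary~\ref{3.6}, where $x^2$ and $gx^2$ appear as separate basis elements).

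Consequently the parity in Lemma~\ref{2.13} does \emph{not} become invisible. The right relation is
\[
[\O^nV(0)]\,y=[\O^{n+1}V(0)]+n\,g^n x^2,
\]
since for $n$ odd the summand is $nP(0)$ (contributing $ngx^2$) and for $n$ even it is $nP(1)$ (contributing $nx^2$), and $g^n$ encodes this. Your displayed ``arithmetic check'' in $\mathbb Z[g]/(g^2-1)$,
\[
f_n(1+2g)+n \;=\; f_{n+1},
\]
is therefore wrong as stated; indeed for $n=1$ the left side is $0+1=1$ while $f_2=(1+g)-1=g$. Replacing $+n$ by $+ng^n$ repairs everything: using $(1+g)(1+2g)=3(1+g)$ and $g^n+g^{n+1}=1+g$ one computes $f_n(1+2g)=(a_{n+1}-n)(1+g)-\tfrac{n(n-1)}{2}g^{n+1}$ via Lemma~\ref{3.2}, and then $-n(1+g)+ng^n=-ng^{n+1}$ yields $f_n(1+2g)+ng^n=f_{n+1}$, which is precisely the paper's computation.
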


\begin{proof}
We prove the lemma by induction on $n$. For $n=1$, since $f_1=0$, we have
$[\O V(0)]=y=y-f_1x^2$. Now assume $n\>1$. Then by the induction hypothesis,
Lemmas \ref{3.1} and \ref{3.2}, we have
$$\begin{array}{rcl}
[\O^nV(0)\otimes \O V(0)]&=&[\O^{n}V(0)][\O V(0)]\\
&=&(y^{n}-f_{n}x^2)y\\
&=&y^{n+1}-f_{n}x^2y\\
&=&y^{n+1}-(a_n(1+g)-\frac{n(n-1)}{2}g^n)(1+2g)x^2\\
&=&y^{n+1}-(a_n(3+3g)-\frac{n(n-1)}{2}(g^n+2g^{n+1}))x^2\\
&=&y^{n+1}-((3a_n-\frac{n(n-1)}{2})(1+g)-\frac{n(n-1)}{2}g^{n+1})x^2\\
&=&y^{n+1}-((a_{n+1}-n)(1+g)-\frac{n(n-1)}{2}g^{n+1})x^2.\\
\end{array}$$
On the other hand, by Proposition \ref{2.1} and Lemma \ref{3.1}, we have $gx^2=[P(0)]$.
Thus, from Lemma \ref{2.13}, one gets that
$[\O^nV(0)\otimes \O V(0)]=[\O^{n+1}V(0)]+ng^nx^2$.
Hence we have
$$\begin{array}{rcl}
[\O^{n+1}V(0)]&=&y^{n+1}-((a_{n+1}-n)(1+g)-\frac{n(n-1)}{2}g^{n+1})x^2-ng^nx^2\\
&=&y^{n+1}-(a_{n+1}(1+g)-n(g^n+g^{n+1})-\frac{n(n-1)}{2}g^{n+1}+ng^n)x^2\\
&=&y^{n+1}-(a_{n+1}(1+g)-\frac{n(n+1)}{2}g^{n+1})x^2\\
&=&y^{n+1}-f_{n+1}x^2.\\
\end{array}$$
\end{proof}

\begin{corollary}\label{3.4}
$[\O^{-n}V(0)]=z^n-f_nx^2$ in $r(D_4)$ for all $n\>1$.
\end{corollary}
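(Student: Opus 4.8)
The plan is to obtain \coref{3.4} from \leref{3.3} by applying the ring involution $(-)^*$ of $r(D_4)$ to the identity $[\O^nV(0)]=y^n-f_nx^2$. As recalled in Section~1, $(-)^*$ is a ring automorphism of $r(D_4)$ determined by $[M]^*=[M^*]$, so I only need to track the images of the generators. By \leref{2.15}, $V(1)^*\cong V(1)$, hence $g^*=g$; combining \leref{2.15} with \prref{2.1} gives $V(2,0)^*\cong V(2,1)\cong V(1)\ot V(2,0)$, hence $x^*=gx$; and by \leref{2.16} (the case $s=1$) we have $(\O V(0))^*\cong\O^{-1}V(0)$, hence $y^*=z$.

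Next I would compute the image of the right-hand side of the identity in \leref{3.3}. Since $g^*=g$ and the coefficients $a_n,\frac{n(n-1)}{2}\in\mathbb Z$ are fixed by any ring automorphism, the element $f_n=a_n(1+g)-\frac{n(n-1)}{2}g^n$ satisfies $f_n^*=f_n$. Also $(x^*)^2=(gx)^2=g^2x^2=x^2$ by \leref{3.1}(1). Hence
$$[\O^nV(0)]^*=(y^n-f_nx^2)^*=(y^*)^n-f_n^*(x^*)^2=z^n-f_nx^2.$$
On the other hand \leref{2.16} gives $(\O^nV(0))^*\cong\O^{-n}V(0)$, so $[\O^nV(0)]^*=[\O^{-n}V(0)]$, and comparing the two expressions yields $[\O^{-n}V(0)]=z^n-f_nx^2$ for all $n\>1$.

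There is no genuine obstacle here; the computation is entirely formal once the images $g^*=g$, $x^*=gx$, $y^*=z$ are in hand. The only point deserving attention is that $x^*=gx$ rather than $x$, but this is harmless because the occurrence of $x^2$ in \leref{3.3} is invariant under twisting by $g$ thanks to $g^2=1$. Alternatively, one could prove the statement by an induction on $n$ parallel to that of \leref{3.3}, using the exact sequence $0\ra V(r)\ra P(r)\ra\O^{-1}V(r)\ra 0$ together with \coref{2.19}(1) in place of \leref{2.13}; but the duality argument is shorter and avoids repeating the bookkeeping of \leref{3.2} and \leref{3.3}.
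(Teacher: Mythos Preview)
Your proof is correct and follows essentially the same route as the paper: apply the ring involution $(-)^*$ to the identity of \leref{3.3}, using $g^*=g$, $x^*=gx$, $y^*=z$ from Lemmas~\ref{2.15}, \ref{2.16} and Proposition~\ref{2.1}, and then observe $f_n^*=f_n$ and $(gx)^2=x^2$. The paper's argument is line-for-line the same.
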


\begin{proof}
By Proposition \ref{2.1} and Lemma \ref{2.15}, we have $g^*=g$ and $x^*=gx$, and so $f_n^*=f_n$.
By Lemma \ref{2.16}, one gets that $y^*=z$ and $[\O^nV(0)]^*=[\O^{-n}V(0)]$ for all $n\>1$.
It follows from Lemma \ref{3.3} that
$[\O^{-n}V(0)]=[\O^nV(0)]^*=(y^n-f_nx^2)^*=y^{*n}-f_n^*x^{*2}=z^n-f_n(gx)^2
=z^n-f_nx^2$.
\end{proof}

Let $R$ be the subring of $r(D_4)$ generated by $g$, $x$, $y$ and $z$.
Then we have the following proposition.

\begin{proposition}\label{3.5}
The subring $R$ of $r(D_4)$ is generated, as a $\mathbb Z$-module, by the following set:
$$\{[V(r)], [V(2,r)], [P(r)], [\O^nV(r)], [\O^{-n}V(r)]|r\in{\mathbb Z}_2, n\>1\}.$$
Consequently, $R$ is a free abelian group with the above set as a $\mathbb Z$-basis.
\end{proposition}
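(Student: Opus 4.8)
The plan is to show that the $\mathbb{Z}$-span of the displayed set $S = \{[V(r)], [V(2,r)], [P(r)], [\O^nV(r)], [\O^{-n}V(r)] \mid r\in{\mathbb Z}_2, n\>1\}$ is closed under multiplication, contains $1 = [V(0)]$, and contains the four generators $g, x, y, z$; since $R$ is by definition the smallest subring containing $g,x,y,z$, this forces $R \subseteq \mathbb{Z}S$, while the reverse inclusion is immediate once we check each element of $S$ lies in $R$. The linear independence (hence the ``consequently'' clause) is then automatic, because $S$ is a subset of the distinguished $\mathbb{Z}$-basis $\{[M] \mid M \in \mathrm{ind}(D_4)\}$ of $r(D_4)$ consisting of isomorphism classes of indecomposables.

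First I would record that each element of $S$ lies in $R$: we have $[V(0)] = 1$, $[V(1)] = g$, $[V(2,0)] = x$, $[V(2,1)] = gx$ by Proposition~\ref{2.1}, $[P(0)] = gx^2$ and $[P(1)] = x^2$ by Lemma~\ref{3.1}(2) together with Proposition~\ref{2.1}, and $[\O^{\pm n}V(0)] \in R$ by Lemma~\ref{3.3} and Corollary~\ref{3.4} (they are explicit polynomials in $y$ or $z$ and $x^2$), while $[\O^{\pm n}V(1)] = g\,[\O^{\pm n}V(0)] \in R$ by Corollary~\ref{2.3}. Hence $S \subseteq R$, giving $\mathbb{Z}S \subseteq R$.

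For the reverse inclusion $R \subseteq \mathbb{Z}S$, it suffices to show $\mathbb{Z}S$ is a subring: it visibly contains $1$, it is closed under addition by construction, and it contains $g, x, y = [\O V(0)], z = [\O^{-1}V(0)]$, so closure under multiplication will finish the argument. So the core of the proof is to verify that the product of any two elements of $S$ is a $\mathbb{Z}$-linear combination of elements of $S$. This is precisely what all the tensor-product computations of Section~2 deliver: Proposition~\ref{2.1} handles products involving $[V(r)]$; Proposition~\ref{2.5} and Corollaries~\ref{2.8},~\ref{2.11} handle $[V(2,r)]$ and $[P(r)]$ against $[V(2,r')]$ and $[P(r')]$; Corollaries~\ref{2.7},~\ref{2.10} handle $[V(2,r')]$ or $[P(r')]$ against $[\O^{\pm s}V(r)]$, landing in sums of $[V(2,\cdot)]$ resp. $[P(\cdot)]$; and Proposition~\ref{2.14}, Corollary~\ref{2.17}, Proposition~\ref{2.20} handle the products $[\O^{\pm s}V(r)][\O^{\pm t}V(r')]$, whose right-hand sides are always of the form $[\O^{\pm(s\pm t)}V(\cdot)] \oplus (\text{multiple of }P(\cdot))$ — note in the mixed-sign case of Proposition~\ref{2.20} the surviving term $[\O^{s-t}V(r+r')]$ covers $\O^0 V = V$ and negative exponents via the convention $\O^0 V(r) = V(r)$ and the identification of $\O^{-m}$ with the cosyzygy, all of which are in $S$. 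Every one of these right-hand sides is manifestly a $\mathbb{Z}$-combination of members of $S$.

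The main obstacle is simply bookkeeping: there are many cases (three families $\{V(2,r)\}$, $\{P(r)\}$, $\{\O^{\pm n}V(r)\}$ plus the one-dimensionals, and within the syzygy family the parity of $s+t$ and the comparison of $s$ and $t$ matter), and one must make sure no product escapes $S$ — in particular that no indecomposable of $(s,s)$-type $M_s(r,\eta)$ ever appears, which is true because none of the cited decompositions for products among elements of $S$ produces such a summand. I would present this as: state the three reductions (``$\mathbb{Z}S$ contains $1$'', ``$\mathbb{Z}S$ contains the generators'', ``$\mathbb{Z}S$ is multiplicatively closed''), then dispatch multiplicative closure by a short table of which Section~2 result covers which pair of basis families, and conclude by invoking that $S$ is part of the indecomposable $\mathbb{Z}$-basis of $r(D_4)$ for freeness.
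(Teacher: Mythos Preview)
Your proposal is correct and follows essentially the same approach as the paper's proof: both show that the $\mathbb{Z}$-span of $S$ is a subring (multiplicatively closed by the Section~2 decompositions, contains $1$ and the four generators) so that $R\subseteq \mathbb{Z}S$, then check each element of $S$ lies in $R$ via Lemma~\ref{3.3}, Corollary~\ref{3.4}, and the relations already established, and finally observe freeness because $S$ consists of distinct indecomposables. Your write-up is in fact somewhat more explicit than the paper's, which simply appeals to ``the discussion in the last section'' for multiplicative closure rather than tabulating the relevant propositions.
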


\begin{proof}
Let $R'$ be the $\mathbb Z$-submodule of $r(D_4)$ generated by the set given in the proposition.
Then $R'$ is obviously a free $\mathbb Z$-module with the set given above as a $\mathbb Z$-basis.
From the discussion in the last section, one can see that
$R'$ is closed with respect to the multiplication of $r(D_4)$.
Note that $1=[V(0)]\in R'$. It follows that $R'$ is a subring of $r(D_4)$.
Hence $R\subseteq R'$ by $g, x, y, z\in R'$.

Conversely, we first have that $[V(1)], [V(2,0)]\in R$. Since $R$ is a subring of $r(D_4)$, $[V(0)]=1\in R$ and
$[V(2,1)]=[V(1)\ot V(2, 0)]=gx\in R$. By Lemma \ref{3.1}, $[P(1)]=x^2\in R$, and hence
$[P(0)]=[V(1)\ot P(1)]=gx^2\in R$. From Lemma \ref{3.3} and Corollary \ref{3.4},
one gets that $[\O^nV(0)]\in R$ and $[\O^{-n}V(0)]\in R$ for all $n\>1$. Then by Corollary \ref{2.3},
we have $[\O^nV(1)]=[V(1)\ot \O^nV(0)]=g[\O^nV(0)]\in R$, and similarly
$[\O^{-n}V(1)]=g[\O^{-n}V(0)]\in R$ for all $n\>1$. Therefore, $R'\subseteq R$.
\end{proof}

\begin{corollary}\label{3.6}
The following set is a $\mathbb Z$-basis of $R$:
$$\{1,\ g,\ x,\ gx,\ x^2,\ gx^2,\ y^n,\ gy^n,\ z^n,\ gz^n|n\>1\}.$$
\end{corollary}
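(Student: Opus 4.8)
\textbf{Proof proposal for Corollary~\ref{3.6}.}
The plan is to show that the proposed set spans $R$ over $\mathbb Z$ and that it is linearly independent, whence it is a $\mathbb Z$-basis. For spanning, I would invoke Proposition~\ref{3.5}, which already tells us that $R$ is spanned as a $\mathbb Z$-module by the set
$$S=\{[V(r)],\ [V(2,r)],\ [P(r)],\ [\O^nV(r)],\ [\O^{-n}V(r)]\mid r\in{\mathbb Z}_2,\ n\>1\}.$$
So it suffices to express every element of $S$ as a $\mathbb Z$-linear combination of $\{1,g,x,gx,x^2,gx^2,y^n,gy^n,z^n,gz^n\mid n\>1\}$. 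This is exactly the content of the formulas already assembled in the preceding lemmas: $[V(0)]=1$, $[V(1)]=g$, $[V(2,0)]=x$, $[V(2,1)]=gx$ (Proposition~\ref{2.1}), $[P(1)]=x^2$, $[P(0)]=gx^2$ (Proposition~\ref{2.1} and Lemma~\ref{3.1}), $[\O^nV(0)]=y^n-f_nx^2$ and $[\O^{-n}V(0)]=z^n-f_nx^2$ (Lemma~\ref{3.3} and Corollary~\ref{3.4}), and finally $[\O^nV(1)]=g[\O^nV(0)]=gy^n-gf_nx^2$, $[\O^{-n}V(1)]=gz^n-gf_nx^2$ (Corollary~\ref{2.3}). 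Since $f_n=a_n(1+g)-\tfrac{n(n-1)}{2}g^n$ and $g^2=1$, the terms $f_nx^2$ and $gf_nx^2$ lie in the $\mathbb Z$-span of $\{x^2,gx^2\}$; hence each element of $S$ is an integer combination of the proposed set, and the proposed set spans $R$.

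For linear independence I would argue via the canonical $\mathbb Z$-basis of $R$ furnished by Proposition~\ref{3.5}, namely the set $S$ above (which is a $\mathbb Z$-basis because the indecomposables listed are pairwise non-isomorphic). The change-of-basis passage from the proposed set to $S$ is \emph{triangular}: each of $1,g,x,gx,x^2,gx^2$ is (up to sign) a single basis element of $S$, and for $n\>1$ the element $y^n=[\O^nV(0)]+f_nx^2$ involves the basis element $[\O^nV(0)]$ together only with $[P(0)],[P(1)]$; similarly $gy^n=[\O^nV(1)]+gf_nx^2$, $z^n=[\O^{-n}V(0)]+f_nx^2$, $gz^n=[\O^{-n}V(1)]+gf_nx^2$. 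Ordering the proposed set so that the six ``small'' elements come first, followed by $y^n,gy^n,z^n,gz^n$, the matrix expressing it in terms of the ordered basis $1,g,x,gx,x^2,gx^2,[\O^1V(0)],[\O^1V(1)],[\O^{-1}V(0)],[\O^{-1}V(1)],[\O^2V(0)],\dots$ is upper triangular with $\pm1$ on the diagonal. Hence it is invertible over $\mathbb Z$, and the proposed set is a $\mathbb Z$-basis of $R$.

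There is no serious obstacle here; the statement is essentially a bookkeeping corollary of Proposition~\ref{3.5}, Lemma~\ref{3.3} and Corollary~\ref{3.4}. The one point that needs a line of care is confirming that $\{1,g,x,gx,x^2,gx^2\}$ has the same $\mathbb Z$-span as $\{[V(0)],[V(1)],[V(2,0)],[V(2,1)],[P(1)],[P(0)]\}$ and that this span is disjoint (as a direct summand of $R$) from the span of the $[\O^{\pm n}V(r)]$ with $n\>1$ --- this is immediate from Proposition~\ref{3.5}, since all these modules are among the listed indecomposables and are pairwise non-isomorphic. With that observed, the triangularity argument finishes the proof.
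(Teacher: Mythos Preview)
Your argument is correct and is essentially the same as the paper's: both use Proposition~\ref{3.5}, Lemma~\ref{3.3} and Corollary~\ref{3.4} to observe that passing between the indecomposable basis $S$ and the proposed set is a triangular change of basis with unit diagonal (the paper phrases this via the quotient $R/R_1$, where $R_1$ is the subring spanned by $\{1,g,x,gx,x^2,gx^2\}$, while you spell out the triangular matrix directly). One cosmetic remark: the diagonal entries are all $+1$, not $\pm1$.
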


\begin{proof}
Let $R_1$ be the subring of $r(D_4)$ generated by $g$ and $x$.
From Lemma \ref{3.1} and the proof of Proposition \ref{3.5}, it follows that $R_1$ is a free $\mathbb Z$-module
with a $\mathbb Z$-basis $\{1, g, x, gx, x^2, gx^2\}$. By Lemma \ref{3.3}, Corollary \ref{3.4} and
the proof of Proposition \ref{3.5}, we have that $[\O^nV(1)]=g[\O^nV(0)]=gy^n-gf_nx^2$ and
$[\O^{-n}V(1)]=g[\O^{-n}V(0)]=gz^n-gf_nx^2$ for all $n\>1$.
Note that $f_nx^2, gf_nx^2\in R_1$ for all $n\>1$. Consider the canonical
$\mathbb Z$-module epimorphism $\pi: R\ra R/R_1$. Then from Lemma \ref{3.3}, Corollary \ref{3.4} and
the discussion above, we have $\pi([\O^nV(r)])=\pi(g^ry^n)$ and $\pi([\O^{-n}V(r)])=\pi(g^rz^n)$
for all $n\>1$ and $r\in{\mathbb Z}_2$. Thus, the corollary follows from Proposition \ref{3.5}.
\end{proof}

Let ${\mathbb Z}[g_1, x_1, y_1, z_1]$ be the polynomial algebra over $\mathbb Z$
in four variables $g_1, x_1, y_1, z_1$. Let $I$ be the ideal of ${\mathbb Z}[g_1, x_1, y_1, z_1]$
generated by the following elements:
$$g_1^2-1,\ x_1^3-2x_1(1+g_1),\ x_1(y_1-1-2g_1),\ x_1(y_1-z_1),\ y_1z_1-1-2x_1^2.$$

\begin{theorem}\label{3.7}
The subring $R$ of the Green ring $r(D_4)$ is isomorphic to the quotient ring
${\mathbb Z}[g_1, x_1, y_1, z_1]/I$.
\end{theorem}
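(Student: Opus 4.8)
The plan is to exhibit a ring isomorphism $\Phi\colon \mathbb{Z}[g_1,x_1,y_1,z_1]/I \to R$ by sending $g_1\mapsto g$, $x_1\mapsto x$, $y_1\mapsto y$, $z_1\mapsto z$. First I would check that $\Phi$ is well defined, i.e.\ that each of the five generators of $I$ maps to zero in $R$: this is precisely the content of Lemma~\ref{3.1} (relation $g_1^2-1$ is part (1); $x_1^3-2x_1(1+g_1)$ is part (2); $x_1(y_1-1-2g_1)$ and $x_1(y_1-z_1)$ are part (3); $y_1z_1-1-2x_1^2$ is part (4)). Hence $\Phi$ descends to a ring homomorphism on the quotient, and it is surjective because $g,x,y,z$ generate $R$ by definition. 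So the whole problem reduces to showing $\Phi$ is injective.

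For injectivity the strategy is a dimension/rank count: I would show that $\mathbb{Z}[g_1,x_1,y_1,z_1]/I$ is spanned, as a $\mathbb{Z}$-module, by (the images of) the set
$$\{1,\ g_1,\ x_1,\ g_1x_1,\ x_1^2,\ g_1x_1^2,\ y_1^n,\ g_1y_1^n,\ z_1^n,\ g_1z_1^n \mid n\geq 1\},$$
and then invoke Corollary~\ref{3.6}, which says the images of these elements under $\Phi$ form a $\mathbb{Z}$-basis of $R$. Since $\Phi$ sends a spanning set of the quotient bijectively onto a basis of $R$, it must be an isomorphism (the spanning set of the quotient is forced to be a basis, and $\Phi$ identifies the two). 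The key computation is therefore the spanning claim in the polynomial quotient. Using $g_1^2=1$ I can restrict to monomials with $g_1$-degree $0$ or $1$; using $x_1^3=2x_1(1+g_1)$ I can reduce the $x_1$-degree to at most $2$; the relations $x_1y_1=x_1z_1=x_1(1+2g_1)$ let me eliminate any monomial containing both $x_1$ and a positive power of $y_1$ or $z_1$ (rewriting it as a $\mathbb{Z}$-combination of $x_1, g_1x_1$); and $y_1z_1=1+2x_1^2$ lets me remove any monomial containing both $y_1$ and $z_1$, replacing it by something of $y_1z_1$-degree zero. After all these reductions, what survives is exactly a $\mathbb{Z}$-combination of $g_1^\epsilon$, $g_1^\epsilon x_1$, $g_1^\epsilon x_1^2$, $g_1^\epsilon y_1^n$, $g_1^\epsilon z_1^n$ with $\epsilon\in\{0,1\}$, $n\geq 1$ — the asserted spanning set.

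The main obstacle I anticipate is making the reduction argument airtight as a terminating rewriting procedure: one has to be careful that repeatedly applying $x_1^3 = 2x_1 + 2g_1x_1$ together with the $x_1y_1$-relations does not loop, and that "pure $x_1$" monomials $x_1^k$ with $k\geq 3$ genuinely collapse into the six-element subspace $\{1,g_1,x_1,g_1x_1,x_1^2,g_1x_1^2\}$ (this is essentially the structure of the subring $R_1$ from the proof of Corollary~\ref{3.6}, and can be quoted). A clean way to organize this is to first pass to the quotient by $(g_1^2-1,\ x_1^3-2x_1(1+g_1))$ and observe it is free over $\mathbb{Z}[y_1,z_1]$-adic data, then kill the remaining three relations; alternatively, one argues directly that any monomial is congruent mod $I$ to one in the listed set by induction on total degree. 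Since we already know from Corollary~\ref{3.6} that $R$ has the images of these elements as a basis, there is no risk of the count being off — once the spanning is established, surjectivity of a map from a module spanned by $N$ "coordinates" onto a free module with those same $N$ coordinates as a basis is automatically bijective.
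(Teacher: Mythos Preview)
Your proposal is correct and follows essentially the same route as the paper: define the map on generators, check well-definedness via Lemma~\ref{3.1}, note surjectivity, and then establish injectivity by showing the quotient $\mathbb{Z}[g_1,x_1,y_1,z_1]/I$ is $\mathbb{Z}$-spanned by the set in Corollary~\ref{3.6}. The only cosmetic difference is that the paper packages the final step as constructing an explicit $\mathbb{Z}$-linear one-sided inverse $\psi$ (defined on the basis of $R$ from Corollary~\ref{3.6}) and checking $\psi\overline{\phi}=\mathrm{id}$ on the spanning set, whereas you phrase it as ``spanning set maps bijectively to a basis''; these are the same argument.
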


\begin{proof}
Since $R$ is a commutative ring, there is a unique ring homomorphism
$\phi: {\mathbb Z}[g_1, x_1, y_1, z_1]\ra R$ such that $\phi(g_1)=g$, $\phi(x_1)=x$,
$\phi(y_1)=y$ and $\phi(z_1)=z$. Since $R$ is generated by $\{g, x, y, z\}$ as a ring,
$\phi$ is an epimorphism. From Lemma \ref{3.1}, one can easily check that
$\phi(g_1^2-1)=0$, $\phi(x_1^3-2x_1(1+g_1))=0$, $\phi(x_1(y_1-1-2g_1))=0$,
$\phi(x_1(y_1-z_1))=0$ and $\phi(y_1z_1-1-2x_1^2)=0$. It follows that $\phi(I)=0$.
Hence $\phi$ induces a ring epimorphism $\ol{\phi}: {\mathbb Z}[g_1, x_1, y_1, z_1]/I\ra R$
such that $\ol{\phi}(\ol{u})=\phi(u)$ for all $u\in{\mathbb Z}[g_1, x_1, y_1, z_1]$,
where $\ol{u}$ denotes the image of $u$ under the canonical epimorphism
${\mathbb Z}[g_1, x_1, y_1, z_1]\ra {\mathbb Z}[g_1, x_1, y_1, z_1]/I$.
By Corollary \ref{3.6}, one can define a $\mathbb Z$-module map $\psi: R\ra {\mathbb Z}[g_1, x_1, y_1, z_1]/I$
by
$$\begin{array}{lllll}
\psi(1)=\ol{1}, &\psi(g)=\ol{g_1}, &\psi(x)=\ol{x_1}, &\psi(gx)=\ol{g_1x_1}, &\psi(x^2)=\ol{x_1^2},\\
\psi(gx^2)=\ol{g_1x_1^2}, &\psi(y^n)=\ol{y_1^n}, &\psi(gy^n)=\ol{g_1y_1^n}, &\psi(z^n)=\ol{z_1^n},
&\psi(gz^n)=\ol{g_1z_1^n},\\
\end{array}$$
where $n\>1$.
From the definition of $I$, one can see that ${\mathbb Z}[g_1, x_1, y_1, z_1]/I$ is
generated, as a $\mathbb Z$-module, by the following set
$$\{\ol{1},\ \ol{g_1},\ \ol{x_1},\ \ol{g_1x_1},\ \ol{x_1^2},\ \ol{g_1x_1^2},\ \ol{y_1^n},\
\ol{g_1y_1^n},\ \ol{z_1^n},\ \ol{g_1z_1^n}|n\>1\}.$$
Let $\ol{u}$ be any element in the above set. Then it is straightforward to check that
$\psi\ol{\phi}(\ol{u})=\ol{u}$. Hence $\psi\ol{\phi}={\rm id}$, which implies that $\ol{\phi}$
is a monomorphism, and so it is a ring isomorphism.
\end{proof}

Now let $X_{n,\eta}=[M_n(0,\eta)]$ in $r(D_4)$ for all $n\>1$ and $\eta\in{\mathbb P}^1(k)$.
Then we have the following lemma.

\begin{lemma}\label{3.8}
Let $n, s\>1$ and $\eta, \a\in{\mathbb P}^1(k)$. Then we have the following relations in $r(D_4)$:\\
$(1)$ $gX_{n,\eta}=[M_n(1,\eta)]$.\\
$(2)$ $xX_{n,\eta}=n(1+g)x$.\\
$(3)$ $yX_{n,\eta}=ngx^2+gX_{n,\eta}$.\\
$(4)$ $zX_{n,\eta}=nx^2+gX_{n,\eta}$.\\
$(5)$ If $\eta\neq \a$ then $X_{n,\eta}X_{s,\a}=nsgx^2$.\\
$(6)$ If $s\>n$ then $X_{n,\eta}X_{s,\eta}=n(s-1)gx^2+X_{n,\eta}+gX_{n,\eta}$.\\
\end{lemma}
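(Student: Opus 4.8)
The plan is to establish the six relations one at a time, in each case unwinding the definition $X_{n,\eta}=[M_n(0,\eta)]$ and translating a module-theoretic decomposition from Section~2 into an identity in $r(D_4)$. Relation (1) is immediate: by Proposition~\ref{2.4} we have $V(1)\ot M_n(0,\eta)\cong M_n(1,\eta)$, so $gX_{n,\eta}=[V(1)][M_n(0,\eta)]=[M_n(1,\eta)]$. For relation (2), recall $x=[V(2,0)]$ and $gx=[V(2,1)]$; Corollary~\ref{2.7}(3) gives $V(2,0)\ot M_n(0,\eta)\cong nV(2,0)\oplus nV(2,1)$, whence $xX_{n,\eta}=nx+ngx=n(1+g)x$.

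For relations (3) and (4) I would invoke Corollary~\ref{2.22} and Corollary~\ref{2.24} with $r=r'=0$ and $t=n$, $s=1$. Since $\O^1V(0)=\O V(0)$ has $s=1$ odd, Corollary~\ref{2.22}(1) gives $M_n(0,\eta)\ot\O V(0)\cong nP(0)\oplus M_n(1,\eta)$; using $y=[\O V(0)]$, $[P(0)]=gx^2$ (from Proposition~\ref{2.1} and Lemma~\ref{3.1}, as in the proof of Lemma~\ref{3.3}) and part~(1), this reads $yX_{n,\eta}=ngx^2+gX_{n,\eta}$. Likewise Corollary~\ref{2.24}(1) with $s=1$ gives $M_n(0,\eta)\ot\O^{-1}V(0)\cong nP(1)\oplus M_n(1,\eta)$; with $z=[\O^{-1}V(0)]$ and $[P(1)]=x^2$ this yields $zX_{n,\eta}=nx^2+gX_{n,\eta}$. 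Relation (5) is a direct transcription of Proposition~\ref{2.25}: for $\eta\neq\a$ in ${\mathbb P}^1(k)$, $M_n(0,\eta)\ot M_s(0,\a)\cong nsP(0)$, so $X_{n,\eta}X_{s,\a}=ns\,[P(0)]=nsgx^2$.

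Relation (6) is the substantive one and the place where a little care is needed about which index is larger. For $s\geq n\geq 1$, Proposition~\ref{2.32} (applied with $t=s$, the role of "$s$" in that proposition played by our $n$, and $r=r'=0$) gives
$$M_n(0,\eta)\ot M_s(0,\eta)\cong n(s-1)P(0)\oplus M_n(0,\eta)\oplus M_n(1,\eta).$$
Passing to $r(D_4)$ and using $[P(0)]=gx^2$ together with part~(1), this becomes $X_{n,\eta}X_{s,\eta}=n(s-1)gx^2+X_{n,\eta}+gX_{n,\eta}$, exactly as claimed. The only thing to watch is that Proposition~\ref{2.32} is stated for $t\geq s\geq 1$ with the first tensor factor carrying the smaller index, so one should note that $r(D_4)$ is commutative (as recorded at the start of Section~3) to match the hypothesis $s\geq n$ in the present lemma to the hypothesis of Proposition~\ref{2.32}. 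No genuine obstacle remains; the "hard work" has all been done in Section~2, and the proof is a bookkeeping exercise translating six module isomorphisms into the ring $r(D_4)$ via the substitutions $[P(0)]=gx^2$, $[P(1)]=x^2$, $[M_n(1,\eta)]=gX_{n,\eta}$.
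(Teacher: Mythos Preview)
Your proposal is correct and follows essentially the same approach as the paper: each relation is obtained by translating the corresponding module decomposition from Section~2 (Proposition~\ref{2.4}, Corollary~\ref{2.7}(3), Proposition~\ref{2.21}(1)/Corollary~\ref{2.22}(1), Corollary~\ref{2.24}(1), Proposition~\ref{2.25}, Proposition~\ref{2.32}) into $r(D_4)$ via the identifications $[P(0)]=gx^2$, $[P(1)]=x^2$, $[M_n(1,\eta)]=gX_{n,\eta}$. The only cosmetic difference is that for part~(3) you cite Corollary~\ref{2.22}(1) while the paper cites Proposition~\ref{2.21}(1), but the former specializes to the latter at $r=r'=0$.
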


\begin{proof}
We have already known that $[V(2,1)]=gx$, $[P(1)]=x^2$ and $[P(0)]=gx^2$. Then
Part (1) follows from Proposition \ref{2.4}. Part (2) follows from Corollary \ref{2.7}(3).
Part (3) follows from Part (1) and Proposition \ref{2.21}(1).
Part (4) follows from Part (1) and Corollary \ref{2.24}(1).
Part (5) follows from Proposition \ref{2.25}. Part (6) follows from Proposition \ref{2.32} and Part (1).
\end{proof}

Let ${\mathbb Z}[X]$ be the polynomial algebra over $\mathbb Z$ in the following variables:
$$X=\{g_1, x_1, y_1, z_1, X'_{n,\eta}|n\>1, \eta\in{\mathbb P}^1(k)\}.$$
Let $J$ be the ideal of ${\mathbb Z}[X]$ generated by the following subset
$$G=\left\{\left.\begin{array}{l}
g_1^2-1,\ x_1^3-2x_1(1+g_1),\ x_1(y_1-1-2g_1),\\
x_1(y_1-z_1),\ y_1z_1-1-2x_1^2,\\
x_1X'_{n,\eta}-n(1+g_1)x_1,\ y_1X'_{n,\eta}-ng_1x_1^2-g_1X'_{n,\eta},\\
z_1X'_{n,\eta}-nx_1^2-g_1X'_{n,\eta},\ X'_{n,\eta}X'_{s,\a}-nsg_1x_1^2,\\
X'_{n,\eta}X'_{t,\eta}-n(t-1)g_1x_1^2-X'_{n,\eta}-g_1X'_{n,\eta}\\
\end{array}\right|
\begin{array}{l}
n, s, t\>1\\
\mbox{with }t\>n,\\
\eta, \a\in{\mathbb P}^1(k)\\
\mbox{with }\eta\neq\a\\
\end{array}\right\}.$$

\begin{theorem}\label{3.9}
The Green ring $r(D_4)$ of $D_4$ is isomorphic to the quotient ring
${\mathbb Z}[X]/J$.
\end{theorem}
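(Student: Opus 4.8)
The plan is to mirror the proof of Theorem~\ref{3.7}. Since $r(D_4)$ is commutative, there is a unique ring homomorphism $\Phi\colon{\mathbb Z}[X]\ra r(D_4)$ determined by $\Phi(g_1)=g$, $\Phi(x_1)=x$, $\Phi(y_1)=y$, $\Phi(z_1)=z$ and $\Phi(X'_{n,\eta})=X_{n,\eta}$ for all $n\>1$ and $\eta\in{\mathbb P}^1(k)$. First I would check surjectivity: by Proposition~\ref{3.5} the classes $[V(r)]$, $[V(2,r)]$, $[P(r)]$, $[\O^nV(r)]$, $[\O^{-n}V(r)]$ all lie in the subring $R=\Phi({\mathbb Z}[g_1,x_1,y_1,z_1])$, and by Lemma~\ref{3.8}(1) one has $[M_n(0,\eta)]=X_{n,\eta}=\Phi(X'_{n,\eta})$ and $[M_n(1,\eta)]=gX_{n,\eta}=\Phi(g_1X'_{n,\eta})$; thus every isomorphism class of an indecomposable $D_4$-module lies in the image, so $\Phi$ is onto. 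Second, Lemmas~\ref{3.1} and~\ref{3.8} state precisely that each generator of $G$ is sent to $0$ by $\Phi$, so $\Phi(J)=0$ and $\Phi$ induces a ring epimorphism $\ol{\Phi}\colon{\mathbb Z}[X]/J\ra r(D_4)$.

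The heart of the argument is the injectivity of $\ol{\Phi}$, which I would deduce from an explicit $\mathbb Z$-spanning set of ${\mathbb Z}[X]/J$. Let $S$ be the image in ${\mathbb Z}[X]/J$ of
$$\{1,\,g_1,\,x_1,\,g_1x_1,\,x_1^2,\,g_1x_1^2\}\cup\{y_1^n,\,g_1y_1^n,\,z_1^n,\,g_1z_1^n\mid n\>1\}\cup\{X'_{n,\eta},\,g_1X'_{n,\eta}\mid n\>1,\ \eta\in{\mathbb P}^1(k)\}.$$
I claim $S$ spans ${\mathbb Z}[X]/J$ over $\mathbb Z$. This is proved by rewriting an arbitrary monomial in the generators modulo $J$: $g_1^2-1$ removes excess factors of $g_1$; $x_1^3-2x_1(1+g_1)$ lowers powers of $x_1$ to degree $\<2$; $x_1(y_1-1-2g_1)$ and $x_1(y_1-z_1)$ kill every occurrence of $x_1$ alongside a $y_1$ or $z_1$; $y_1z_1-1-2x_1^2$ kills every monomial containing both a $y_1$ and a $z_1$; $x_1X'_{n,\eta}-n(1+g_1)x_1$ removes an $X'$ whenever $x_1$ is present; $y_1X'_{n,\eta}-ng_1x_1^2-g_1X'_{n,\eta}$ and $z_1X'_{n,\eta}-nx_1^2-g_1X'_{n,\eta}$ lower the $y_1$- and $z_1$-degree next to an $X'$; and the two quadratic relations in the $X'$'s collapse any product of two $X'$'s into a $\mathbb Z$-combination of $g_1x_1^2$ and single $X'$-terms (using commutativity to apply $X'_{n,\eta}X'_{t,\eta}-n(t-1)g_1x_1^2-X'_{n,\eta}-g_1X'_{n,\eta}$, listed for $t\>n$, to an arbitrary ordered pair, and collapsing a product of three or more $X'$'s one pair at a time). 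Iterating these reductions — first collapsing all products among the $X'$'s, then removing $x_1$ from any monomial still containing a $y_1$ or $z_1$, then eliminating mixed $y_1,z_1$ monomials, then reducing the surviving powers of $x_1$ and $g_1$ — terminates and leaves a $\mathbb Z$-linear combination of elements of $S$.

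To finish I would compute $\ol{\Phi}(S)=\{1,g,x,gx,x^2,gx^2\}\cup\{y^n,gy^n,z^n,gz^n\mid n\>1\}\cup\{X_{n,\eta},gX_{n,\eta}\mid n\>1,\ \eta\in{\mathbb P}^1(k)\}$ and show this is a $\mathbb Z$-basis of $r(D_4)$. Indeed, by Corollary~\ref{3.6} the first ten families form a $\mathbb Z$-basis of $R$; moreover $r(D_4)$ is free on the isomorphism classes of all indecomposable modules while $R$ is free on the classes of those not of the form $M_n(r,\eta)$ (Proposition~\ref{3.5}), so $r(D_4)=R\oplus\bigoplus_{n\>1,\,r,\,\eta}{\mathbb Z}[M_n(r,\eta)]$ as abelian groups, and $\{X_{n,\eta}=[M_n(0,\eta)],\ gX_{n,\eta}=[M_n(1,\eta)]\}$ is a $\mathbb Z$-basis of the second summand. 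Hence $\ol{\Phi}$ carries the spanning family $S$ onto the $\mathbb Z$-basis $\ol{\Phi}(S)$ of $r(D_4)$ in an index-preserving way; consequently any element of $\ker\ol{\Phi}$, being a $\mathbb Z$-combination of $S$ that maps to a vanishing $\mathbb Z$-combination of basis elements, is itself $0$. Thus $\ol{\Phi}$ is injective, hence a ring isomorphism.

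I expect the spanning claim of the second paragraph to be the main obstacle: each individual rewriting is routine, but one must set up a genuinely terminating reduction order, and must be attentive to the fact that the relation for $X'_{n,\eta}X'_{t,\eta}$ is only recorded for $t\>n$ and that products of several $X'$'s — or of an $X'$ with high powers of $y_1$ and $z_1$ — must be collapsed step by step without producing monomials outside the $\mathbb Z$-span of $S$.
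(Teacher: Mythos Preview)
Your proposal is correct and follows essentially the same route as the paper: define the natural surjection ${\mathbb Z}[X]\ra r(D_4)$, check it kills $J$, and prove injectivity of the induced map by exhibiting a $\mathbb Z$-spanning set of ${\mathbb Z}[X]/J$ that is carried bijectively to a $\mathbb Z$-basis of $r(D_4)$. The only cosmetic difference is packaging: the paper handles the ``$R$-part'' by factoring through the ring map $\tau\colon{\mathbb Z}[g_1,x_1,y_1,z_1]/I\ra{\mathbb Z}[X]/J$ and invoking Theorem~\ref{3.7} to see $\tau$ is injective, then builds a one-sided inverse $\psi$ to $\ol{f}$; you instead quote Corollary~\ref{3.6} directly for the explicit basis of $R$ and argue that a spanning set mapping to a basis forces injectivity --- the same content, phrased without the auxiliary $\tau$ and $\psi$.
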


\begin{proof}
Since $r(D_4)$ is a commutative ring, there is a unique ring homomorphism
$f: {\mathbb Z}[X]\ra r(D_4)$ such that
$$f(g_1)=g,\ f(x_1)=x,\ f(y_1)=y,\ f(z_1)=z,\ f(X'_{n,\eta})=X_{n,\eta}$$
for all $n\>1$ and $\eta\in{\mathbb P}(k)$. By Lemma \ref{3.8}(1),
$[M_n(1,\eta)]=gX_{n,\eta}$ for $n\>1$ and $\eta\in{\mathbb P}(k)$.
It follows from Proposition \ref{3.5} that $r(D_4)$ is generated, as a ring, by
$\{g, x, y, z, X_{n,\eta}|n\>1, \eta\in{\mathbb P}^1(k)\}$, which implies
that $f$ is an epimorphism. By Lemmas \ref{3.1} and \ref{3.8},
it is straightforward to check that $f(u)=0$ for all $u\in G$. Hence
$f(J)=0$, and so $f$ induces a unique ring epimorphism
$\ol{f}: {\mathbb Z}[X]/J\ra r(D_4)$ such that $\ol{f}(\ol{u})=f(u)$
for all $u\in{\mathbb Z}[X]$, where $\ol{u}$ denotes the image of $u$
under the canonical epimorphism ${\mathbb Z}[X]\ra{\mathbb Z}[X]/J$.
Note that ${\mathbb Z}[g_1, x_1, y_1, z_1]$ is a subring of ${\mathbb Z}[X]$
since $\{g_1, x_1, y_1, z_1\}\subset X$. Obviously,
$f({\mathbb Z}[g_1, x_1, y_1, z_1])=R$ and
$I\subseteq J\cap{\mathbb Z}[g_1, x_1, y_1, z_1]$, where $R$ and $I$ are given
as before. Therefore, there is a ring homomorphism
$\tau: {\mathbb Z}[g_1, x_1, y_1, z_1]/I\ra{\mathbb Z}[X]/J$ given by
$\tau(u+I)=\ol{u}$ for all $u\in{\mathbb Z}[g_1, x_1, y_1, z_1]$.
Consider the composition of ring homomorphisms
$$\theta: {\mathbb Z}[g_1, x_1, y_1, z_1]/I\xrightarrow{\tau}{\mathbb Z}[X]/J
\xrightarrow{\ol{f}}r(D_4).$$
Then ${\rm Im}(\theta)=R$. Hence $\theta$
can be regarded as a ring homomorphism
$$\theta: {\mathbb Z}[g_1, x_1, y_1, z_1]/I\ra R.$$
One can easily see that $\theta$ is exactly the ring
isomorphism $\ol{\phi}: {\mathbb Z}[g_1, x_1, y_1, z_1]/I\ra R$ described in
the proof of Theorem \ref{3.7}. Hence $\theta$ is injective, and so is $\tau$,
which implies that $I=J\cap{\mathbb Z}[g_1, x_1, y_1, z_1]$.
Moreover, $\ol{f}|_{{\rm Im}(\tau)}: {\rm Im}(\tau)\ra R$ is a ring isomorphism.
Let $(\ol{f}|_{{\rm Im}(\tau)})^{-1}: R\ra{\rm Im}(\tau)$ be the inverse.

Let $R_0$ be the $\mathbb Z$-submodule of $r(D_4)$ generated by
$\{X_{n,\eta}, gX_{n,\eta}|n\>1, \eta\in{\mathbb P}^1(k)\}$.
Then $R_0$ is a free $\mathbb Z$-module
with the basis $\{X_{n,\eta}, gX_{n,\eta}|n\>1, \eta\in{\mathbb P}^1(k)\}$.
It follows from Proposition \ref{3.5} that $r(D_4)=R\oplus R_0$ as $\mathbb Z$-modules.
Hence one can define a $\mathbb Z$-module homomorphism
$\psi: r(D_4)\ra {\mathbb Z}[X]/J$ by $\psi(v)=(\ol{f}|_{{\rm Im}(\tau)})^{-1}(v)$
for all $v\in R$, $\psi(X_{n,\eta})=\ol{X'_{n,\eta}}$ and $\psi(gX_{n,\eta})=\ol{g_1X'_{n,\eta}}$
for all $n\>1$ and
$\eta\in{\mathbb P}^1(k)$. By the definition of $\tau$, one can see that
${\rm Im}(\tau)$ is generated, as a subring of ${\mathbb Z}[X]/J$, by
$\{\ol{g_1}, \ol{x_1}, \ol{y_1}, \ol{z_1}\}$. Then from the definition of $J$,
one gets that ${\mathbb Z}[X]/J$ is generated, as a $\mathbb Z$-module, by
${\rm Im}(\tau)\cup\{\ol{X'_{n,\eta}}, \ol{g_1X'_{n,\eta}}|n\>1, \eta\in{\mathbb P}^1(k)\}$.
Obviously, $(\psi\ol{f})|_{{\rm Im}(\tau)}={\rm id}_{{\rm Im}(\tau)}$.
For all $n\>1$ and $\eta\in{\mathbb P}^1(k)$, we have that
$(\psi\ol{f})(\ol{X'_{n,\eta}})=\psi(f(X'_{n,\eta}))=\psi(X_{n,\eta})=\ol{X'_{n,\eta}}$
and $(\psi\ol{f})(\ol{g_1X'_{n,\eta}})=\psi(f(g_1X'_{n,\eta}))=\psi(gX_{n,\eta})=\ol{g_1X'_{n,\eta}}$.
It follows that $\psi\ol{f}$ is the identity map on ${\mathbb Z}[X]/J$.
Hence $\ol{f}$ is a monomorphism, and so it is a ring isomorphism.
\end{proof}

\begin{remark}
From Lemma \ref{3.8} and Theorem \ref{3.9}, one knows that $r(D_4)$ is not finitely generated
as a ring.
\end{remark}

\centerline{ACKNOWLEDGMENTS}

This work is supported by NSF of China (No. 11171291).\\


\begin{thebibliography}{99}
\bibitem{Archer}
L. Archer, On certain quotients of the Green rings of dihedral 2-groups,
J. Pure \& Appl. Algebra 212 (2008), 1888-1897.
\bibitem{ARS}
M. Auslander, I. Reiten and S. O. Smal${\o}$,
Representation Theory of Artin Algebras,
Cambridge Univ. Press, Cambridge, 1995.
\bibitem{Bass}
H. Bass, Algebraic $K$-Theory, Benjamin, New York, 1968.
\bibitem{BenCar}
D. J. Benson and J. F. Carlson, Nilpotent elements in the Green ring,
J. Algebra 104 (1986), 329-350.
\bibitem{BenPar}
D. J. Benson and R. A. Parker, The Green ring of a finite group, J. Algebra 87 (1984), 290-331.
\bibitem{BrJoh}
R. M. Bryant and M. Johnson, Periodicity of Adams operations on the Green
ring of a finite group, J. Pure Appl. Algebra 215 (2011), 989-1002.
\bibitem{Ch1}
H. X. Chen, A class of noncommutative and
noncocommutative Hopf algebras-the quantum version, Comm. Algebra 27 (1999), 5011-5023.
\bibitem{Ch2}
H. X. Chen, Irreducible representations of a class of quantum doubles,
J. Algebra 225 (2000), 391-409.
\bibitem{Ch3}
H. X. Chen, Finite-dimensional
representations of a quantum double, J. Algebra 251 (2002),
751-789.
\bibitem{Ch4}
H. X. Chen, Representations of a class of Drinfeld's doubles,
Comm. Algebra 33 (2005), 2809-2825.
\bibitem{ChVOZh}
H. X. Chen, F. Van Oystaeyen and Y. H. Zhang, The Green rings of Taft algebras,
arXiv:1111.1837v2[math.RT], to appear in Proc. of AMS.
\bibitem{ChZh}
H. X. Chen and Y. H. Zhang, Four-dimensional Yetter-Drinfeld module algebras over $H_4$,
J. Algebra 296 (2006), 582-634.
\bibitem{Chin}
W. Chin, Special biserial coalgebras and representations of quantum SL(2), J. Algebra 353 (2012), 1-21.
\bibitem{Cib}
C. Cibils, A quiver quantum group, Commun. Math. Phys. 157 (1993), 459-477.
\bibitem{Green}
J. A. Green, The modular representation algebra of a finite group, Ill. J.
Math. 6(4) (1962), 607-619.
\bibitem{Gunn}
E. Gunnlaugsd${\rm{\acute{o}}}$ttir, Monoidal structure of the category of $\mathfrak{u}^+_q$-modules,
Linear Algebra and its Applications 365 (2003), 183-199.
\bibitem{HTW}
I. Hambleton, L. R. Taylor and E. B. Williams, Dress induction and Burnside quotient Green ring,
Algebra Number Theory 3 (2009), 511-541.
\bibitem{LiZhang}
L. B. Li and Y. H. Zhang, The Green rings of the generalized Taft Hopf algebras,
Contemporary Mathematics 585 (2013), 275-288.
\bibitem{Ka}
C. Kassel, Quantum groups, Springer-Verlag, New York, 1995.
\bibitem{ks}
H. Kondo and Y. Saito, Indecomposable decomposition of tensor products of modules over the restricted quantum universal enveloping algebra associated to $\mathfrak{sl}_2$,  J. Algebra 330 (2011), 103-129.
\bibitem{KropRad}
L. Krop and D. E. Radford, Representations of pointed Hopf algebras and their Drinfeld quantum doubles,
J. Algebra 321 (2009), 2567-2603.
\bibitem{Lo}
M. Lorenz, Representations of finite-dimensional Hopf algebras, J. Algebra 188 (1997), 476-505.
\bibitem{Maj}
S. Majid, Foundations of quantum group theory, Cambridge Univ. Press,  Cambridge, 1995.
\bibitem{Mon}
S. Montgomery,  Hopf Algebras and their actions on rings,  CBMS Series in Math.,  Vol.
82, Am. Math. Soc., Providence, 1993.
\bibitem{ObSch}
U. Oberst and H.-J. Schneider, Uber untergruppen endlicher algebraischer gruppen, Manuscripta Math. 8 (1973), 217-241.
\bibitem{Ra}
D. E. Radford, Minimal quasitriangular Hopf algebras, J. Algebra 175 (1993), 285-315.
\bibitem{Sw}
M. E. Sweedler, Hopf Algebras, Benjamin, New York, 1969.
\bibitem{Ta}
E. J. Taft, The order of the antipode of a finite-dimensional Hopf algebra, Proc. Nat. Acad. Sci. USA 68 (1971), 2631-2633.
\bibitem{With}
S. J. Witherspoon, The representation ring of the quantum double of a finite group, J. Algebra 179 (1996), 305-329.
\end{thebibliography}
\end{document}